\definecolor{hrefcolor}{rgb}{0.0,0.5,0.8}
\definecolor{hlgreen}{rgb}{0,0.7,0}
\newcounter{remcount}
\newtheorem{theorem}{Theorem}
\newtheorem{lemma}{Lemma}
\newtheorem{proposition}{Proposition}
\theoremstyle{definition}
\newtheorem*{assumption*}{Assumption}
\newtheorem{remark}{Remark}
\newtheorem*{remark*}{Remark}
\newtheorem*{definition*}{Definition}
\newtheorem{example}{Example}
\numberwithin{equation}{section}
\numberwithin{lemma}{section}
\numberwithin{theorem}{section}
\numberwithin{proposition}{section}
\numberwithin{definition}{section}
\numberwithin{remark}{section}
\numberwithin{example}{section}
\numberwithin{assumption}{section}
\numberwithin{algorithm}{section}
\numberwithin{corollary}{section}
\newcommand*{\doi}[1]{doi:\href{http://dx.doi.org/#1}{\detokenize{#1}}}
\def\email#1{\texttt{\href{mailto:#1}{#1}}}
\newcommand{\term}{\emph}
\newcommand{\field}[1]{\mathbb{#1}}
\newcommand{\N}{\mathbb{N}}
\newcommand{\R}{\field{R}}
\newcommand{\B}{B}
\newcommand{\norm}[1]{\|#1\|}
\newcommand{\abs}[1]{|#1|}
\newcommand{\inv}[1]{#1^{-1}}
\newcommand{\grad}{\nabla}
\newcommand{\freevar}{\,\boldsymbol\cdot\,}
\newcommand{\Union}\bigcup
\newcommand{\Isect}\bigcap
\newcommand{\union}\cup
\newcommand{\isect}\cap
\newcommand{\bigunion}\bigcup
\newcommand{\bigisect}\bigcap
\newcommand{\defeq}{:=}
\newcommand{\downto}{\searrow}
\newcommand{\subdiff}{\partial}
\DeclareMathOperator{\Dom}{dom}
\def \uminus@sym{\setbox0=\hbox{$\cup$}\rlap{\hbox 
        to\wd0{\hss\raise0.5ex\hbox{$\scriptscriptstyle{-}$}\hss}}\box0}
    \def \uminus    {\mathrel{\uminus@sym}}
\newcommand{\mathvar}[1]{\textup{#1}}
\renewcommand{\tilde}{\widetilde}
\newcommand{\iprod}[2]{\langle #1,#2\rangle}
\newcommand{\BVspace}{\mathvar{BV}}
\newcommand{\Eabs}{\mathcal{E}}
\renewcommand{\L}{\mathcal{L}}
\renewcommand{\d}{\,d} %\text{d}}
\newcommand{\TGV}{\mathvar{TGV}}
\newcommand{\TV}{\mathvar{TV}}
\def \weaktostar@sym{\setbox0=\hbox{$\rightharpoonup$}\rlap{\hbox 
        to\wd0{\hss\raise1ex\hbox{$\scriptscriptstyle{*\,}$}\hss}}\box0}
    \def \weaktostar    {\mathrel{\weaktostar@sym}}
\def\extR{\overline \R}
\def\linear{\mathcal{L}}
\newcommand{\linearLArrow}[1][]{\linear_{\triangleleft\ifx&#1&\else,\,#1\fi}}
\newcommand{\linearLArrowSpecial}[1][]{\linear^{\star}_{\triangleleft\ifx&#1&\else,\,#1\fi}}
\def\realopt#1{\widehat #1}
\def\this#1{#1^i}
\def\next#1{#1^{i+1}}
\def\overnext#1{\bar #1^{i+1}}
\def\realoptu{{\realopt{u}}}
\def\realoptx{{\realopt{x}}}
\def\realopty{{\realopt{y}}}
\def\nextu{\next{u}}
\def\nextx{\next{x}}
\def\nexty{\next{y}}
\def\thisu{\this{u}}
\def\thisx{\this{x}}
\def\thisy{\this{y}}
\def\overnextx{\overnext{x}}
\def\OverRelax{\Omega}
\def\invstar#1{{#1}^{-1,*}}
\def\Tau{T}
\def\TauHat{{\hat \Tau}}
\def\TauTilde{{\tilde \Tau}}
\def\TauSpace{\L(X; X)}
\def\TauHatSpace{\hat{\mathcal{\Tau}}}
\def\TauTildeSpace{\tilde{\mathcal{\Tau}}}
\def\UHatSet{\hat{\mathcal{K}}}
\def\UTildeSet{\tilde{\mathcal{K}}}
\def\gap{\mathcal{G}}
\def\ergGap#1{\gap^{#1}}
\begin{document}

\title{Acceleration of the PDHGM on strongly convex subspaces}
\date{\today}
\author{
    Tuomo Valkonen\thanks{Department of Applied Mathematics and Theoretical Physics, University of Cambridge, United Kingdom. \email{tuomo.valkonen@iki.fi}}
    \thanks{Department of Mathematical Sciences, University of Liverpool, United Kingdom}
    \and
    Thomas Pock\thanks{Institute for Computer Graphics and Vision, Graz University of Technology, 8010 Graz, Austria. Digital Safety \& Security Department, AIT Austrian Institute of Technology GmbH, 1220 Vienna, Austria. \email{pock@icg.tugraz.at}}
    }

\maketitle

\begin{abstract}
    We propose several variants of the primal-dual method due to Chambolle and Pock.
    Without requiring full strong convexity of the objective functions, our methods are accelerated on subspaces with strong convexity.
    This yields mixed rates, $O(1/N^2)$ with respect to initialisation and $O(1/N)$ with respect to the dual sequence, and the residual part of the primal sequence. 
    We demonstrate the efficacy of the proposed methods on image processing problems lacking strong convexity, such as total generalised variation denoising and total variation deblurring.
\end{abstract}

%\tableofcontents

\section{Introduction}

Let $G: X \to \extR$ and $F^*: Y \to \extR$ be convex, proper, and lower semicontinuous functionals on Hilbert spaces $X$ and $Y$, possibly infinite-dimensional. Also let $K \in \linear(X; Y)$ be a bounded linear operator. We then wish to solve the minimax problem
\begin{equation}
    \notag
    \min_{x \in X} \max_{y \in Y} \ G(x) + \iprod{K x}{y} - F^*(y).
\end{equation}
One possibility is the primal-dual algorithm of Chambolle and Pock \cite{chambolle2010first}, a type of proximal point or extragradient method, also classified as the ``modified primal-dual hybrid gradient method'' or PDHGM by Esser \cite{esser2010general}. If either $G$ of $F^*$ is strongly convex, the method can be accelerated to produce Nesterov's \cite{nesterov1983method} optimal $O(1/N^2)$ rates. But what if we have only partial strong convexity? For example, what if 
\[
    G(x)=G_0(P x)
\]
for a projection operator $P$ to a subspace $X_0 \subset X$, and strongly convex $G_0: X^0 \to \R$? This kind of structure is common in many applications in image processing and the data sciences, as we will more closely review in Section \ref{sec:example}. Under such \term{partial strong convexity}, can we obtain a method that would give an accelerated rate of convergence at least for $P x$?

We provide a partially positive answer: we can obtain mixed rates, $O(1/N^2)$ with respect to initialisation, and $O(1/N)$ with respect to bounds on the ``residual variables'' $y$ and $(I-P)x$.
In this, our results are similar to the ``optimal'' algorithm of Chen et al.~\cite{chen2015optimal}. Instead of strong convexity, they assume smoothness of $G$ to derive a primal-dual algorithm based on backward--forward steps, instead of the backward--backward steps of \cite{chambolle2010first}. 
%We will see in Appendix \ref{sec:forward}, that our general framework can, in fact, be used to derive a close relative of their algorithm.

The derivation of our algorithms is based, firstly, on replacing simple step length parameters by a variety of abstract step length operators and, secondly, a type of abstract partial strong monotonicity property
\begin{equation}
    \label{eq:mono-intro}
    \iprod{\subdiff G(x') - \subdiff G(x)}{\inv\TauTilde(x'-x)} \ge %\frac{1}{2}
        \norm{x'-x}_{\invstar\TauTilde\Gamma'}^2 - \psi_{\invstar\TauTilde(\Gamma'-\Gamma)}({x'-x}),
\end{equation}
the full details of which we provide in Section \ref{sec:abstract}.
In this, we make the monotonicity dependent on the step length operator $\TauTilde$.
%In the derivation of more specific algorithms in Sections \ref{sec:scalar0} and \ref{sec:special}, $\invstar\TauTilde$ will be a scalar, and essentially disappear from \eqref{eq:mono-intro}.
%For the general framework, including $\TauTilde$ in \eqref{eq:mono-intro} is necessary, however.
%
Secondly, our factor of strong convexity is the operator $\Gamma$, which is however shifted in \eqref{eq:mono-intro} into a \term{penalty term} $\psi$ through the introduction of additional strong monotonicity in terms of $\Gamma' \ge \Gamma$.
This exact procedure can be seen as a type of smoothing, famously studied by Nesterov \cite{nesterov2005smooth}, and more recently, for instance, by Beck and Teboulle \cite{beck2012smoothing}. In these approaches, one computes \emph{a priori} a level of smoothing---comparable to $\Gamma'$---needed to achieve certain quality of solution, and then solves a smoothed problem at the optimal $O(1/N^2)$ rate. However, to achieve a better solution than the a priori chosen quality, one needs to solve a new problem from scratch, or to develop restarting strategies. Our approach does not depend on restarting and a priori chosen solution qualities. Indeed, $\Gamma'$ is controlled automatically. In most applications, $\psi_{\invstar\TauTilde(\Gamma'-\Gamma)}({x'-x})=\inv{\tilde\tau}\gamma^\perp C$ for $\gamma^\perp$ the introduced strong monotonicity on the orthogonal complement $X_0^\perp$. This kind of constant $\psi$ can in particular be achieved on bounded domains, as was also employed for the aforementioned mixed-rate algorithm \cite{chen2015optimal}.

The ``fast dual proximal gradient method'', or FDPG \cite{beck2014fdpg}, also possesses different type of mixed rates, $O(1/N)$ for the primal, and $O(1/N^2)$ for the dual. This is however under standard strong convexity assumptions.
Other than that, our work is related to various further developments form the PDHGM, such as variants for non-linear $K$ \cite{tuomov-nlpdhgm,benning2015preconditioned}, and non-convex $G$ \cite{mollenhoff2014primal}.
It has been the basis for inertial methods for monotone inclusions \cite{lorenz2014accelerated}, and primal-dual stochastic coordinate descent methods without separability requirements \cite{fercoq2015coordinate}. Finally, the FISTA \cite{beck2009fista,beck2009fast} can be seen as a primal-only version of the PDHGM.
Not attempting to do full justice here to the large family of closely-related methods, we point to \cite{esser2010general,setzer2011operator,tuomov-big-images} for further references.

The contributions of our paper are twofold: firstly, to paint a bigger picture of what is possible, we derive a very general version of the PDHGM. This algorithm, useful as a basis for deriving other new algorithms besides ours, is the content of Section \ref{sec:abstract}.
A byproduct of this work is the shortest convergence rate proof for the accelerated PDHGM known to us.
Secondly, in Section \ref{sec:special}, we derive from the general algorithm two efficient mixed-rate algorithms for problems exhibiting strong convexity only on subspaces. The first one employs the penalty or smoothing $\psi$ on both the primal and the dual. The second one only employs the penalty on the dual. We do some of the groundwork for these algorithms in Section \ref{sec:scalar0}. We finish the study with numerical experiments in Section \ref{sec:example}. The main results of interest for readers wishing to apply our work are Algorithms \ref{alg:alg-projective-both} and \ref{alg:alg-proj} along with the respective convergence results, Theorem \ref{thm:projective-both} and Theorem \ref{thm:projective-dualonly}.

%%%
\section{A general primal-dual method}
\label{sec:abstract}
%%%

\subsection{Background}

As in the introduction, let us be given convex, proper, lower semicontinuous functionals
$G: X \to \extR$ and $F^*: Y \to \extR$ on Hilbert spaces $X$ and $Y$, as well as a bounded linear operator $K \in \linear(X; Y)$.
We then wish to solve the minimax problem
\begin{equation}
    \label{eq:problem}
    \tag{P}
    \min_{x \in X} \max_{y \in Y} \ G(x) + \iprod{K x}{y} - F^*(y),
\end{equation}
assuming the existence of a solution $\realoptu=(\realoptx, \realopty)$ satisfying the optimality conditions
\begin{equation}
    \label{eq:oc}
    \tag{OC}
    -K^* \realopty \in \subdiff G(\realoptx),
    \quad\text{and}\quad
    K \realoptx \in \subdiff F^*(\realopty).
\end{equation}
%Under our assumptions on $G$, $F^*$, and $K$, 
Such a point always exists if $\lim_{\norm{x} \to \infty} G(x)=\infty$ and $\lim_{\norm{y} \to \infty} F^*(y)=\infty$ \cite[Proposition VI.1.2 \& Proposition VI.2.2]{ekeland1999convex}. More generally the existence has to be proved explicitly.
In finite dimensions, see, e.g., \cite{rockafellar-wets-va} for sufficient conditions.

The primal-dual method of Chambolle and Pock \cite{chambolle2010first} for \eqref{eq:problem} consists of iterating the system
\begin{subequations}
\label{eq:cp}
\begin{align}
    \nextx & \defeq (I+\tau_i \subdiff G)^{-1}(\thisx - \tau_i K^* y^{i}),\\
    \label{eq:cp-overnextx}
    \overnextx & \defeq \omega_i (\nextx-\thisx)+\nextx, \\
    \nexty & \defeq (I+\sigma_{i+1} \subdiff F^*)^{-1}(\thisy + \sigma_{i+1} K \overnextx).
\end{align} 
\end{subequations}
In the basic version of the algorithm, $\omega_i=1$, $\tau_i \equiv \tau_0$, and $\sigma_i \equiv \sigma_0$, assuming that the step length parameters satisfy $\tau_0 \sigma_0 \norm{K}^2 < 1$. The method has $O(1/N)$ rate for the ergodic duality gap. If $G$ is strongly convex with factor $\gamma$, we may accelerate
\begin{equation}
    \label{eq:cpaccel}
    \omega_i \defeq 1/\sqrt{1+2\gamma\tau_i},
    \quad
    \tau_{i+1} \defeq \tau_i\omega_i,
    \quad\text{and}\quad
    \sigma_{i+1} \defeq \sigma_i/\omega_i,
\end{equation}
to achieve $O(1/N^2)$ convergence rates.
To motivate our choices later on, observe that $\sigma_0$ is never needed, if we equivalently parametrise the algorithm by $\delta = 1 - \norm{K}^2 \tau_0\sigma_0 > 0$, which gives the fixed ratio of $\sigma_i$ to $\tau_i$.

We note that the order of the steps in \eqref{eq:cp} is reversed from the original ordering in \cite{chambolle2010first}. This is because with the present order, the method \eqref{eq:cp} may also be written in the proximal point form. This formulation, first observed in \cite{he2012convergence} and later utilised in \cite{tuomov-nlpdhgm,pock2011iccv,benning2015preconditioned}, is also what we will use to streamline our analysis. Introducing the general variable splitting notation,
\[
    u=(x, y),
\]
the system \eqref{eq:cp} then reduces into
\begin{equation}
    \label{eq:pp}
    0 \in H(\nextu) + M_{\text{basic},i}(\nextu-\thisu),
\end{equation}
for the monotone operator
\begin{equation}
    \label{eq:h}
    H(u) \defeq
        \begin{pmatrix}
            \subdiff G(x) + K^* y \\
            \subdiff F^*(y) -K x
        \end{pmatrix},
\end{equation}
and the \emph{preconditioning} or \emph{step-length operator}
\begin{equation}
    \label{eq:cp-m}
    M_{\text{basic},i} \defeq
        \begin{pmatrix}
            1/\tau_i & -K^* \\
            -\omega_i K & 1/\sigma_{i+1}
        \end{pmatrix}.
\end{equation}
We note that the optimality conditions \eqref{eq:oc} can also be encoded as $0 \in H(\realoptu)$.

\subsection{Abstract partial monotonicity}

Our plan now is to formulate a general version of \eqref{eq:cp}, replacing $\tau_i$ and $\sigma_i$ by operators $\Tau_i \in \L(X; X)$ and $\Sigma_i \in \L(Y; Y)$. In fact, we will need two additional operators $\TauTilde_i \in \L(X; X)$ and $\TauHat_i \in \L(Y; Y)$ to help communicate change in $\Tau_i$ to $\Sigma_i$.
They replace $\omega_i$ in \eqref{eq:cp-overnextx} and \eqref{eq:cp-m}, operating as $\TauHat_{i+1} K \inv\TauTilde_i \approx \omega_i K$ from both sides of $K$.
The role of $\TauTilde_i$ is to split the primal step length in space the $X$ into two parts with potentially different rates, $\Tau_i$ and $\TauTilde_i$, while $\TauHat_i$ transfers $\TauTilde_i$ into the space $Y$, to eventualy control the dual step length $\Sigma_i$.
In the basic algorithm \eqref{eq:cp}, we would simply have $\TauTilde_i=\Tau_i=\tau_i I \in \L(X; X)$, and $\TauHat_i=\tau_i I \in \L(Y; Y)$ for the scalar $\tau_i$.

%We will then use this algorithm as a basis to derive more specific accelerated algorithms with non-uniform steps to account for partial strong convexity.

To make the notation definite, we denote by $\linear(X; Y)$ the space of bounded linear operators between Hilbert spaces $X$ and $Y$. For $T \in \linear(X; X)$, the notation $T \ge 0$ means that $T$ is positive semidefinite. In this case, we also denote
\[
    [0, T] \defeq \{\lambda T \mid \lambda \in [0, 1]\}.
\]
For $M \in \L(X; X)$, which can possibly not be self-adjoint, we employ the notation
\begin{equation}
    \label{eq:iprod-def}
    \iprod{a}{b}_M \defeq \iprod{Ma}{b},
    \quad
    \text{and}
    \quad
    \norm{a}_M \defeq \sqrt{\iprod{a}{a}_M}.
\end{equation}
We also use the notation $\invstar T \defeq (\inv T)^*$.

To start the algorithm derivation, we now formulate abstract forms of partial strong monotonicity. As a first step, we take subspaces of invertible operators
\[
    %\TauSpace, 
    \TauTildeSpace \subset \linear(X; X),
    \quad\text{and}\quad
    \TauHatSpace \subset \linear(Y; Y),
\]
as well as subsets of positive semidefinite operators
\[
    0 \le \UTildeSet \subset \linear(X; X),
    \quad\text{and}\quad
    0 \le \UHatSet \subset \linear(Y; Y).
\]
We assume $\TauTildeSpace$ and $\TauHatSpace$ closed with respect to composition: $\TauTilde_1\TauTilde_2 \in \TauTildeSpace$ for $\TauTilde_1,\TauTilde_2 \in \TauTildeSpace$.

We use the sets $\UTildeSet$ and $\UHatSet$ as follows.
We suppose that $\subdiff G$ is \emph{partially strongly $(\psi, \TauTildeSpace, \UTildeSet)$-monotone}, which we take to mean that
\begin{multline}
    \label{eq:g-strong-monotone}
    \tag{G-PM}
    \iprod{\subdiff G(x') - \subdiff G(x)}{\inv\TauTilde(x'-x)} \ge %\frac{1}{2}
        \norm{x'-x}_{\invstar\TauTilde\Gamma'}^2 - \psi_{\invstar\TauTilde(\Gamma'-\Gamma)}({x'-x}),
        \\
    \quad (x, x' \in X;\, \TauTilde \in \TauTildeSpace;\, \Gamma' \in [0, \Gamma]+\UTildeSet).
\end{multline}
for some family of functionals $\{\psi_{T}: X \to \R\}$, and a linear operator $0 \le \Gamma \in \linear(X; X)$ which models partial strong monotonicity. The inequality in \eqref{eq:g-strong-monotone}, and all such set inequalities in the remainder of this paper, is understood to hold for all elements of the sets $\subdiff G(x')$  and $\subdiff G(x)$.
The operator $\TauTilde \in \TauTildeSpace$ acts as a \term{testing operator}, and the operator $\Gamma' \in \UTildeSet$ as \term{introduced strong monotonicity}. The functional $\psi_{\invstar\TauTilde(\Gamma'-\Gamma)}$ is a \term{penalty} corresponding to the test and the introduced strong monotonicity.
The role of testing will become more apparent in Section \ref{sec:testing}.

Similarly to \eqref{eq:g-strong-monotone}, we assume that $\subdiff F^*$ is \emph{$(\phi, \TauHatSpace, \UHatSet)$-monotone} with respect to $\TauHatSpace$ in the sense that 
\begin{multline}
    \label{eq:f-monotone}
    \tag{F$^*$-PM}
    \iprod{\subdiff F^*(y') - \subdiff F^*(y) }{\invstar\TauHat(y'-y)} \ge 
    \norm{y'-y}_{\inv\TauHat R}^2 -\phi_{\inv\TauHat R}({y'-y}), 
    \\
    \quad (y, y' \in Y;\, \TauHat \in \TauHatSpace;\, R \in \UHatSet)
\end{multline}
for some family of functionals $\{\phi_{T}: Y \to \R\}$.
Again, the inequality in \eqref{eq:f-monotone} is understood to hold for all elements of the sets $\subdiff F^*(y')$ and $\subdiff F^*(y)$.

In our general analysis, we do not set any conditions on $\psi$ and $\phi$, as their role is simply symbolic transfer of dissatisfaction of strong monotonicity into a penalty in our abstract convergence results.
As discussed in the introduction, these functionals can be seen as an abstract approach to smoothing, however without any ``restarting'' requirements on the algorithm.
With this, $\UTildeSet$ and $\UHatSet$ can be seen as sets of admissible smoothing parameters.

Let us next look at a few examples on how \eqref{eq:g-strong-monotone} or \eqref{eq:f-monotone} might be satisfied. First we have the very well-behaved case of quadratic functions.

\begin{example}
    $G(x)=\norm{f-Ax}^2/2$ satisfies \eqref{eq:g-strong-monotone} with $\Gamma=A^*A$, $\UTildeSet=\{0\}$, and $\psi \equiv 0$ for any invertible $\TauTilde$.
\end{example}

The next lemma demonstrates what can be done when all the parameters are scalar. It naturally extends to functions of the form $G(x_1, x_2)=G(x_1)+G(x_2)$ with corresponding product-form parameters.

\begin{lemma}
    \label{lemma:bounded-strong-convexity-constant}
    Let $G: X \to \extR$ be convex, proper, and lower semicontinuous, with $\Dom G$ bounded.
    Then
    \begin{equation}
        \label{eq:strong-convexity-constant}
        G(x')-G(x) \ge \iprod{\subdiff G(x)}{x'-x} + \frac{\gamma}{2}\left(\norm{x'-x}^2-C_\psi\right),
    \end{equation}
    for some constant $C_\psi \ge 0$, every $\gamma \ge 0$, and $x, x' \in X$.
\end{lemma}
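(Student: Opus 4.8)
The plan is to combine the ordinary subgradient inequality for convex $G$ with the boundedness of $\Dom G$, using the latter to absorb the quadratic correction into a constant. First I would recall that convexity, properness, and lower semicontinuity give the subgradient inequality
\begin{equation*}
    G(x') - G(x) \ge \iprod{p}{x'-x}, \qquad p \in \subdiff G(x),\ x' \in X,
\end{equation*}
which already accounts for the linear term on the right-hand side of \eqref{eq:strong-convexity-constant}. It therefore remains to show that the extra term $\tfrac{\gamma}{2}\bigl(\norm{x'-x}^2 - C_\psi\bigr)$ is nonpositive for a suitable constant $C_\psi$, at least on the set of pairs $(x,x')$ for which the claimed inequality is not already automatic.

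Next I would take $C_\psi \defeq (\diam \Dom G)^2$, which is finite and nonnegative because $\Dom G$ is bounded. By the set-inequality convention, \eqref{eq:strong-convexity-constant} is only asserted when $\subdiff G(x) \ne \emptyset$; in that case $x \in \Dom G$. If moreover $G(x') < \infty$, i.e. $x' \in \Dom G$, then $\norm{x'-x} \le \diam \Dom G$, so $\norm{x'-x}^2 - C_\psi \le 0$ and hence $\tfrac{\gamma}{2}\bigl(\norm{x'-x}^2 - C_\psi\bigr) \le 0$ for every $\gamma \ge 0$; adding this to the subgradient inequality yields \eqref{eq:strong-convexity-constant}. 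If instead $G(x') = +\infty$, the left-hand side of \eqref{eq:strong-convexity-constant} is $+\infty$ while the right-hand side is finite, so the inequality holds trivially. Finally, if $\subdiff G(x) = \emptyset$ there is nothing to prove.

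I do not expect any genuine obstacle here; the only point requiring care is the bookkeeping with the extended-real-valued $G$ and the empty-subdifferential cases, so as to confirm that the single constant $C_\psi = (\diam \Dom G)^2$ works uniformly in $\gamma \ge 0$ and in $x, x' \in X$. The product-form extension mentioned after the statement then follows by applying the scalar bound to each block $G(x_j)$ separately and summing, with $C_\psi$ replaced by the sum of the corresponding squared diameters.
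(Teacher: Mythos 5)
Your proposal is correct and follows essentially the same route as the paper's proof: dispose of the cases $\subdiff G(x)=\emptyset$ and $G(x')=+\infty$, then on $\Dom G$ use boundedness to make the term $\tfrac{\gamma}{2}\bigl(\norm{x'-x}^2-C_\psi\bigr)$ nonpositive and add the plain subgradient inequality. The only cosmetic difference is your choice $C_\psi=(\diam \Dom G)^2$ versus the paper's $C=4M^2$ with $M=\sup_{x\in\Dom G}\norm{x}$, which serve the same purpose.
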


\begin{proof}
    We denote $A \defeq \Dom G$.
    If $x' \not \in A$, we have $G(x')=\infty$ so \eqref{eq:strong-convexity-constant} holds irrespective of $\gamma$ and $C$. If $x \not \in A$, we have $\subdiff G(x)=\emptyset$, so \eqref{eq:strong-convexity-constant} again holds. We may therefore compute the constants based on $x, x' \in A$.
    Now, there is a constant $M$ such that $\sup_{x \in A} \norm{x} \le M$. 
    Then $\norm{x'-x} \le 2M$. Thus, if we pick $C=4M^2$, then $(\gamma/2)(\norm{x'-x}^2-C) \le 0$ for every $\gamma \ge 0$ and $x, x' \in A$.
    By the convexity of $G$, \eqref{eq:strong-convexity-constant} holds.
\end{proof}

\begin{example}
    An indicator function $\delta_A$ of a convex bounded set $A$ satisfies the conditions of Lemma \ref{lemma:bounded-strong-convexity-constant}.
    This is generally what we will use and need.
\end{example}

\begin{Algorithm}
    \caption{Primal-dual algorithm with partial acceleration}
    \label{alg:alg}
    %\begin{subequations}
    \begin{algorithmic}[1]
    \REQUIRE %$\Sigma_0$, $\Tau_0$, and $\TauTilde_0$ satisfying \eqref{eq:tauhat-cond}. 
    $F^*$ and $G$ satisfying \eqref{eq:g-strong-monotone} and \eqref{eq:f-monotone} for some sets and spaces $\UTildeSet$, $\UHatSet$, $\TauTildeSpace$, $\TauHatSpace$, and $0 \le \Gamma \in \L(X; X)$.
    Initial invertible $\Tau_0 \in \TauSpace$, $\TauTilde_0 \in \TauTildeSpace$, $\TauHat_1 \in \TauHatSpace$, and $\Sigma_1 \in \L(Y; Y)$, as well as $\delta \in (0, 1)$, satisfying for $j=0$ the condition
    \begin{equation}
        \label{eq:simipos-alg}
        S_{j}M_{j} \ge \delta \begin{pmatrix} \invstar\TauTilde_{j} \inv\Tau_{j} & 0 \\ 0 & 0 \end{pmatrix}.
    \end{equation}
    %, as well as $\TauHat_0 \in \TauHatSpace$, and $\Sigma_0 \in \linear(Y; Y)$.

    \STATE Choose initial iterates $x^0 \in X$ and $y^0 \in Y$.
    \REPEAT 
    \STATE Find invertible $\Tau_{i+1} \in \TauSpace$, $\TauTilde_{i+1} \in \TauTildeSpace$, $\TauHat_{i+2} \in \TauHatSpace$, and $\Sigma_{i+2} \in \linear(Y; Y)$ satisfying \eqref{eq:simipos-alg} with $j=i+1$, as well as the condition
    \[
        S_i(M_i+\bar\Gamma_i) \ge S_{i+1}M_{i+1}
    \]
    for some $0 \le R_{i+1} \in \UHatSet$ and $\Gamma_i \in [0, \Gamma] + \UTildeSet$.
    \STATE Perform the updates
    \begin{align}
        \notag%\label{eq:alg-nextx}
        \nextx & \defeq (I+\Tau_i \subdiff G)^{-1}(\thisx - \Tau_i K^* y^{i}),\\
        %
        %\overnextw & \defeq \TauHat_{i+1} K \inv \Tau_i (\nextx-\thisx)+K \nextx
        %\overnextx & \defeq \Tau_{i+1} \inv \Tau_i (\nextx-\thisx)+\nextx
        \notag%\label{eq:alg-overnextx}
        \overnext{w} & \defeq \TauHat_{i+1} K \inv \TauTilde_i (\nextx-\thisx)+K\nextx,
        \\
        %\nexty & \defeq (I+\Sigma_{i+1} \subdiff F^*)^{-1}(\thisy + \Sigma_{i+1} \overnextw).
        \notag%\label{eq:alg-nexty}
        \nexty & \defeq (I+\Sigma_{i+1} \subdiff F^*)^{-1}(\thisy + \Sigma_{i+1} \overnext{w}).
    \end{align}   
    \UNTIL a stopping criterion is fulfilled.
    \end{algorithmic}
    %\end{subequations}
\end{Algorithm}

%%%
\subsection{A general algorithm and the idea of testing}
\label{sec:testing}
%%%

The only change we make to the proximal point formulation \eqref{eq:pp} of the method \eqref{eq:cp}, is to replace the basic step length or preconditioning operator $M_{\text{basic},i}$ by the operator
\[
    M_i \defeq
        \begin{pmatrix}
            \inv\Tau_i & -K^* \\
            -\TauHat_{i+1} K \inv \TauTilde_i & \inv\Sigma_{i+1}
        \end{pmatrix}.
\]
As we have remarked, the operators $\TauHat_{i+1}$ and $\TauTilde_i$ play the role of $\omega_i$, acting from both sides of $K$.
Our proposed algorithm can thus be characterised as solving on each iteration $i \in \N$ for the next iterate $\nextu$ the preconditioned proximal point problem
\begin{equation}
    \label{eq:prox-update}
    \tag{PP}
    0 \in H(\nextu) + M_i(\nextu-\thisu).
\end{equation}

To study the convergence properties of \eqref{eq:prox-update},
% and to derive more practical algorithms,
we define the \emph{testing operator}
\begin{equation}
    \label{eq:si-def}
    S_i \defeq
        \begin{pmatrix}
            \invstar \TauTilde_i & 0 \\
            0 & \inv{\TauHat_{i+1}}
        \end{pmatrix}.
\end{equation}
It will turn out that multiplying or ``testing'' \eqref{eq:prox-update} by this operator will allow us to derive convergence rates. This is roughly akin to how distributions (generalised functions) are applied to smooth test functions, hence the terminology.
The testing of \eqref{eq:prox-update} by $S_i$ is also why we introduced testing into the monotonicity conditions \eqref{eq:g-strong-monotone} and \eqref{eq:f-monotone}.
If we only tested \eqref{eq:prox-update} with $S_i=I$, we could at most obtain ergodic convergence of the duality gap for the unaccelerated method. But by testing with something approriate and faster increasing, such as \eqref{eq:si-def}, we are able to extract better convergence rates from \eqref{eq:prox-update}. 

We also set
\[
    \bar\Gamma_i=
    \begin{pmatrix}
        2\Gamma_i & \TauTilde_i^*(K\inv\TauTilde_i-\inv\TauHat_{i+1}K)^* \\
        \TauHat_{i+1} (K\inv\TauTilde_i - \inv \TauHat_{i+1} K) & 2 R_{i+1}
    \end{pmatrix},
\]
for some $\Gamma_i \in [0, \Gamma]+\UTildeSet$ and $R_{i+1} \in \UHatSet$.
We will see in Section \ref{thm:convergence-result-main} that $\bar\Gamma_i$ is a factor of partial strong monotonicity for $H$ with respect to testing by $S_i$.
With this, taking a fixed $\delta>0$, the properties
\begin{align}
    \label{eq:simigamma-simiplus}
    \tag{C1}
    S_i(M_i+\bar\Gamma_i) & \ge S_{i+1}M_{i+1},
    \quad\text{and}\quad
    \\
    \label{eq:simipos}
    \tag{C2}
    S_iM_i & \ge \delta \begin{pmatrix} \invstar\TauTilde_i \inv\Tau_i & 0 \\ 0 & 0 \end{pmatrix}
    \ge 0,
\end{align}
will turn out to be the crucial defining properties for the convergence rates of the iteration \eqref{eq:prox-update}. The resulting method can also be expressed as Algorithm \ref{alg:alg}. The main steps in developing practical algorithms based on it, will be in the choice of the various step length operators. This will be the content of Section \ref{sec:scalar0} and \ref{sec:special}. Before this, we expand the conditions \eqref{eq:simigamma-simiplus} and \eqref{eq:simipos} to see how they might be satisfied, and study abstract convergence results.

%%%
\subsection{A simplified condition}
%%%

We expand
\[
    S_iM_i = \begin{pmatrix}
        \invstar\TauTilde_i \inv\Tau_i & - \invstar \TauTilde_i K^* \\
        -K\inv \TauTilde_i & \inv\TauHat_{i+1}\inv\Sigma_{i+1}
    \end{pmatrix},
\]
as well as
\begin{equation}
    \label{eq:si-bargammai}
    S_i\bar\Gamma_i= \begin{pmatrix}
        2 \invstar\TauTilde_i \Gamma_i & \invstar\TauTilde_i K^* - K^* \invstar \TauHat_{i+1}  \\
        K\inv\TauTilde_i-\inv \TauHat_{i+1}K &  2 \inv\TauHat_{i+1} R_{i+1}
    \end{pmatrix},
\end{equation}
and
\[
    S_i(M_i +\bar\Gamma_i)= \begin{pmatrix}
        \invstar\TauTilde_i(\inv\Tau_i +2\Gamma_i) & - K^* \invstar \TauHat_{i+1}  \\
        -\inv \TauHat_{i+1}K & \inv\TauHat_{i+1}(\inv\Sigma_{i+1} + 2 R_{i+1})
    \end{pmatrix}.
\]
By Young's inequality,
% for any invertible $Z_i \in \linear(X; X)$, we have 
% \[
%     S_iM_i \ge \begin{pmatrix}
%         \invstar\TauTilde_i \inv\Tau_i - \invstar\TauTilde_i Z_i^* Z_i\inv\TauTilde_i & 0 \\
%         0 & \inv\TauHat_{i+1}\inv\Sigma_{i+1} - K \inv Z_i \invstar Z_i K^*
%     \end{pmatrix}.
% \]
% Thus
\eqref{eq:simipos} is thus satisfied when for some invertible $Z_i \in \linear(X; X)$,
\begin{equation}
    \notag
    \inv\TauHat_{i+1}\inv\Sigma_{i+1} \ge K \inv Z_i \invstar Z_i K^*,
    \quad\text{and}\quad
    %(1-\delta)\TauTilde_i \ge \alpha_i \Tau_i.
    (1-\delta)\invstar\TauTilde_i \inv\Tau_i \ge \invstar\TauTilde_i Z_i^* Z_i\inv\TauTilde.
\end{equation}
The second constraint is satisfied as an equality if
\begin{equation}
    \label{eq:zi-choice}
    Z_i^*Z_i=(1-\delta)\inv \Tau_i \TauTilde_i.
\end{equation}
Note that this choice will also be optimal for the first constraint. By the spectral theorem for self-adjoint operators on Hilbert spaces (e.g., \cite[Chapter 12]{rudin2006functional}), we can make the choice \eqref{eq:zi-choice} if
\begin{equation}
    %\label{eq:tauhatsigma-bound-selfadj-precond}
    %\tag{C2$'$0}
    \notag
    \inv \Tau_i\TauTilde_i \in \mathcal{Q} \defeq \{A \in \L(X; X) \mid A \text{ is self-adjoint and positive definite}\}.
\end{equation}
Equivalently, by the same spectral theorem, $\inv\TauTilde_i\Tau_i \in \mathcal{Q}$.
Therefore \eqref{eq:simipos} holds when
\begin{equation}
    \label{eq:tauhatsigma-bound}
    \tag{C2$'$}
    \inv\TauTilde_i\Tau_i \in \mathcal{Q}
    \quad\text{and}\quad
    \inv\TauHat_{i+1}\inv\Sigma_{i+1} \ge \frac{1}{1-\delta} K \inv\TauTilde_i \Tau_i K^*.
\end{equation}
%This is guaranteed by \eqref{eq:tauhat-cond}.
Also, \eqref{eq:simigamma-simiplus} can be rewritten
\begin{equation}
    \label{eq:simigamma-simiplus-2}
    \tag{C1$'$}
    %\notag
    \begin{pmatrix}
        \invstar\TauTilde_i(\inv\Tau_i +2\Gamma_i) - \invstar\TauTilde_{i+1} \inv\Tau_{i+1} &  \invstar \TauTilde_{i+1} K^* - K^*\invstar\TauHat_{i+1} \\
        K\inv\TauTilde_{i+1}-\inv \TauHat_{i+1}K & \inv\TauHat_{i+1}(\inv\Sigma_{i+1} + 2 R_{i+1})- \inv\TauHat_{i+2}\inv\Sigma_{i+2}
    \end{pmatrix}
    \ge 0.
\end{equation}

%%%
\subsection{Basic convergence result}
\label{sec:convergence-result-main}
%%%

Our main result on Algorithm \ref{alg:alg} is the following theorem, providing some general convergence estimates. It is, however, important to note that the theorem does not yet directly prove convergence, as its estimates depend on the rate of decrease of $\Tau_N \TauTilde_N^*$, as well as the rate of  increase of the penalty sum $\sum_{i=0}^{N-1} D_{i+1}$ coming from the dissatisfaction of strong convexity. Deriving these rates in special cases will be the topic of Section \ref{sec:special}.

\begin{theorem}
    \label{thm:convergence-result-main}
    Let us be given $K \in \linear(X; Y)$, and convex, proper, lower semicontinuous functionals $G: X \to \extR$ and $F^*: Y \to \extR$  on Hilbert spaces $X$ and $Y$, satisfying \eqref{eq:g-strong-monotone} and \eqref{eq:f-monotone}.
    Pick $\delta \in (0, 1)$, and suppose \eqref{eq:simigamma-simiplus} and \eqref{eq:simipos} are satisfied for each $i \in \N$ for some invertible $\Tau_{i} \in \TauSpace$, $\TauTilde_{i} \in \TauTildeSpace$, $\TauHat_{i+1} \in \TauHatSpace$, and $\Sigma_{i+1} \in \linear(Y; Y)$, as well as $\Gamma_i \in [0, \Gamma] + \UTildeSet$ and $R_{i+1} \in \UHatSet$.
    Let $\realoptu=(\realoptx, \realopty)$ satisfy \eqref{eq:oc}.
    Then the iterates of Algorithm \ref{alg:alg} satisfy
    \begin{equation}
        \label{eq:convergence-result-main}
        \frac{\delta}{2}\norm{x^N-\realoptx}_{\invstar \TauTilde_N \inv \Tau_N}^2
        \le
        C_0
        +
        \sum_{i=0}^{N-1} D_{i+1},
        \quad
        (N \ge 1),
    \end{equation}
    for
    \begin{equation}
        \label{eq:diplus1}
        D_{i+1} \defeq
            \psi_{\invstar\TauTilde_i(\Gamma_i-\Gamma)}({x^{i+1}-\realoptx})
            +
            \phi_{\inv\TauHat_{i+1} R_{i+1}}({y^{i+1}-\realopty}),
        \quad\text{and}\quad
        C_0 \defeq \frac{1}{2}\norm{u^0-\realoptu}_{S_0 M_0}^2.
    \end{equation}
\end{theorem}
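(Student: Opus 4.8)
The plan is to test the proximal point inclusion \eqref{eq:prox-update} against the testing operator $S_i$ and the offset $u^{i+1}-\realoptu$, and then telescope. First I would pick, for each $i$, an element $h^{i+1} \in H(\nextu)$ with $h^{i+1} = -M_i(\nextu - \thisu)$, which exists by \eqref{eq:prox-update}. Since $0 \in H(\realoptu)$, applying the defining (partial strong) monotonicity estimates \eqref{eq:g-strong-monotone} and \eqref{eq:f-monotone} — with the testing operators $\TauTilde = \TauTilde_i \in \TauTildeSpace$, $\TauHat = \TauHat_{i+1} \in \TauHatSpace$, and the introduced strong monotonicity $\Gamma_i \in [0,\Gamma]+\UTildeSet$, $R_{i+1} \in \UHatSet$ — together with the skew-symmetric coupling coming from $K^*$, $K$ in $H$, should yield
\begin{equation}
    \notag
    \iprod{h^{i+1}}{\nextu - \realoptu}_{S_i} \ge \frac{1}{2}\norm{\nextu-\realoptu}_{S_i\bar\Gamma_i}^2 - D_{i+1},
\end{equation}
i.e. $\bar\Gamma_i$ is exactly the factor of partial strong monotonicity of $H$ relative to testing by $S_i$, with the penalty terms collecting into $D_{i+1}$ as in \eqref{eq:diplus1}. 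The cross terms in $\bar\Gamma_i$ (the off-diagonal blocks involving $K\inv\TauTilde_i - \inv\TauHat_{i+1}K$) are precisely what is needed to absorb the discrepancy between $\iprod{K^* (y^{i+1}-\realopty)}{\invstar\TauTilde_i(x^{i+1}-\realoptx)}$ and $\iprod{\invstar\TauHat_{i+1}K(x^{i+1}-\realoptx)}{y^{i+1}-\realopty}$, so that the skew part of $H$ contributes $\iprod{(x^{i+1}-\realoptx,\,y^{i+1}-\realopty)}{\bar\Gamma_i(\cdots)}$ cleanly; I would verify this by expanding both sides.

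Next I would rewrite the left-hand side using $h^{i+1} = -M_i(\nextu-\thisu)$ and the algebraic identity (``three-point'' / cosine rule) for the possibly non-self-adjoint form $\iprod{\freevar}{\freevar}_{S_iM_i}$: for any $a,b$,
\begin{equation}
    \notag
    2\iprod{S_iM_i(a-b)}{a-c} = \norm{a-c}_{S_iM_i}^2 + \norm{a-b}_{S_iM_i}^2 - \norm{b-c}_{S_iM_i}^2 + (\text{skew remainder}),
\end{equation}
applied with $a=\nextu$, $b=\thisu$, $c=\realoptu$. Here one must be careful: $S_iM_i$ is not symmetric, but only its symmetric part contributes to the quadratic terms, and \eqref{eq:simipos} is a statement about that symmetric part; the skew remainder telescopes away or is handled by the fact that $\iprod{v}{v}_{S_iM_i}$ depends only on the symmetric part. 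Combining with the monotonicity bound gives, after rearranging,
\begin{equation}
    \notag
    \frac{1}{2}\norm{\nextu-\realoptu}_{S_i M_i}^2 + \frac{1}{2}\norm{\nextu-\realoptu}_{S_i\bar\Gamma_i}^2 \le \frac{1}{2}\norm{\thisu-\realoptu}_{S_iM_i}^2 + D_{i+1},
\end{equation}
i.e. $\tfrac12\norm{\nextu-\realoptu}_{S_i(M_i+\bar\Gamma_i)}^2 \le \tfrac12\norm{\thisu-\realoptu}_{S_iM_i}^2 + D_{i+1}$ (using that the symmetric part of $S_i\bar\Gamma_i$ adds to that of $S_iM_i$). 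Now condition \eqref{eq:simigamma-simiplus}, $S_i(M_i+\bar\Gamma_i) \ge S_{i+1}M_{i+1}$, lets me replace the first term by $\tfrac12\norm{\nextu-\realoptu}_{S_{i+1}M_{i+1}}^2$, and then the estimate telescopes over $i=0,\dots,N-1$ to
\begin{equation}
    \notag
    \frac{1}{2}\norm{u^N-\realoptu}_{S_NM_N}^2 \le \frac{1}{2}\norm{u^0-\realoptu}_{S_0M_0}^2 + \sum_{i=0}^{N-1} D_{i+1} = C_0 + \sum_{i=0}^{N-1}D_{i+1}.
\end{equation}
Finally, condition \eqref{eq:simipos} with $i=N$ gives $\norm{u^N-\realoptu}_{S_NM_N}^2 \ge \delta\norm{x^N-\realoptx}_{\invstar\TauTilde_N\inv\Tau_N}^2$, yielding \eqref{eq:convergence-result-main}.

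The main obstacle I anticipate is the bookkeeping around non-self-adjointness of $M_i$ and $S_iM_i$: one has to be scrupulous that the ``testing'' identities and the positivity hypotheses \eqref{eq:simigamma-simiplus}–\eqref{eq:simipos} are only ever used on symmetric parts, and that all the skew cross-terms involving $K$ either cancel in the telescoping or are exactly the ones packaged into $\bar\Gamma_i$. The second delicate point is checking that the off-diagonal blocks of $\bar\Gamma_i$ are the correct ones to make ``$H$ is $S_i$-partially-strongly-monotone with factor $\bar\Gamma_i$'' hold as an equality-type identity rather than merely an inequality; this is a direct but slightly intricate expansion using the bilinearity of $\iprod{K^*\freevar}{\freevar}$ and the definitions of $S_i$ and $\bar\Gamma_i$. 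Everything else — the three-point identity, the telescoping, and the final extraction via \eqref{eq:simipos} — is routine once those two algebraic facts are nailed down.
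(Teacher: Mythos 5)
Your proposal follows essentially the same route as the paper's proof: test the inclusion \eqref{eq:prox-update} by $S_i$, use \eqref{eq:g-strong-monotone} and \eqref{eq:f-monotone} to show that $\bar\Gamma_i$ is the strong-monotonicity factor of $H$ under this testing (the paper's estimate $\iprod{H(\nextu)}{S_i^*(\nextu-\realoptu)} \ge \tfrac12\norm{\nextu-\realoptu}_{S_i\bar\Gamma_i}^2 - D_{i+1}$), apply the three-point identity, drop $\tfrac12\norm{\nextu-\thisu}_{S_iM_i}^2$ via \eqref{eq:simipos}, pass from $S_i(M_i+\bar\Gamma_i)$ to $S_{i+1}M_{i+1}$ via \eqref{eq:simigamma-simiplus}, telescope, and extract the final bound with \eqref{eq:simipos} at $i=N$. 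Your extra caution about the non-self-adjointness of $S_iM_i$ is well placed but does not change the argument; the paper simply states the three-point identity for ``arbitrary $M$'' (which in effect concerns only the symmetric part), so your write-up matches the paper's proof step for step.
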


\begin{remark}
    The term $D_{i+1}$ coming from the dissatisfaction of strong convexity, penalises the basic convergence rate.
    If $\Tau_N \TauTilde_N$ is of the order $O(1/N^2)$, at least on a subspace, and we can bound the penalty $D_{i+1} \le C$ for some constant $C$, then we clearly obtain mixed $O(1/N^2) + O(1/N)$ convergence rates on the subspace.
    If we can assume that $D_{i+1}$ actually converges to zero at some rate, then it will even be possible to obtain improved convergence rates. 
    Since typically $\TauTilde_i, \TauHat_{i+1} \downto 0$ reduce to scalar factors within $D_{i+1}$, this would require prior knowledge of the rates of convergence $x^i \to \realoptx$ and $y^i \to \realopty$. Boundedness we can however usually ensure.
\end{remark}

\begin{proof}
% From \eqref{eq:g-strong-monotone} and \eqref{eq:f-monotone}, we derive
% \begin{equation}
%     \notag
%     \iprod{\subdiff G(\nextx)-\subdiff G(\realoptx)}{\inv \TauTilde_i(\nextx-\realoptx)}
%         \ge %\frac{1}{2}
%             \norm{\nextx-\realoptx}_{\invstar\TauTilde_i\Gamma_i}^2
%             -\psi_{\invstar\TauTilde_i(\Gamma_i-\Gamma)}({\nextx-\realoptx}),
% \end{equation}
% and
% \begin{equation}
%     \notag
%     \iprod{\subdiff F^*(\nexty)-\subdiff F^*(\realopty)}{\invstar{\TauHat_{i+1}}(\nexty-\realopty)}
%         \ge \norm{\nexty-\realopty}_{\inv\TauHat_{i+i}R_{i+1}}^2-\phi_{\inv\TauHat_{i+1}R_{i+1}}({\nexty-\realopty}).
% \end{equation}
Since $0 \in H(\realoptu)$, we have
\[
    \iprod{H(\nextu)}{S_i^*(\nextu - \realoptu)}
    \subset
    \iprod{H(\nextu)-H(\realoptu)}{S_i^*(\nextu - \realoptu)}.
\]
Recalling the definition of $S_i$ from \eqref{eq:si-def}, and of $H$ from \eqref{eq:h}, it follows
\begin{equation}
    \notag
    \begin{split}
        \iprod{H(\nextu)}{S_i^*(\nextu - \realoptu)}
        &
        \subset \iprod{\subdiff G(\nextx)-\subdiff G(\realoptx)}{\inv \TauTilde_i(\nextx - \realoptx)}
        \\ & \phantom{ = }
         + \iprod{\subdiff F^*(\nexty)-\subdiff F^*(\realopty)}{\invstar{\TauHat_{i+1}}(\nexty - \realopty)}
        \\ & \phantom{ = }
        + \iprod{K^*(\nexty-\realopty)}{\inv \TauTilde_i(\nextx-\realoptx)}
        \\ & \phantom{ = }
        - \iprod{K(\nextx-\realoptx)}{\invstar{\TauHat_{i+1}}(\nexty-\realopty)}.
    \end{split}
\end{equation}
An application of \eqref{eq:g-strong-monotone} and \eqref{eq:f-monotone} consequently gives
\begin{equation}
    \notag
    \begin{split}
        \iprod{H(\nextu)}{S_i^*(\nextu - \realoptu)}
        &
        \ge %\frac{1}{2}
            \norm{\nextx-\realoptx}_{\invstar\TauTilde_i\Gamma_i}^2
            +\norm{\nexty-\realopty}_{\inv\TauHat_{i+i}R_{i+1}}^2
        \\ & \phantom{ = } 
            -\phi_{\inv\TauHat_{i+1}R_{i+1}}({\nexty-\realopty})
            -\psi_{\invstar\TauTilde_i(\Gamma_i-\Gamma)}({\nextx-\realoptx})
        \\ & \phantom{ = }
        + \iprod{K \inv \TauTilde_i(\nextx-\realoptx)}{\nexty-\realopty}
        %\\ & \phantom{ = }
        - \iprod{\inv{\TauHat_{i+1}}K(\nextx-\realoptx)}{\nexty-\realopty}.
    \end{split}
\end{equation}
Using the expression \eqref{eq:si-bargammai} for $S_i\bar\Gamma_i$, and \eqref{eq:diplus1} for $D_{i+1}$, we thus deduce
\begin{equation}
    \label{eq:sc-est}
        \iprod{H(\nextu)}{S_i^*(\nextu - \realoptu)}
        \ge
        \frac{1}{2}\norm{\nextu-\realoptu}_{S_i\bar\Gamma_i}^2
            -D_{i+1}.
\end{equation}

For arbitrary $M \in \L(X \times Y; X \times Y)$ we calculate
% \begin{equation}
%     %\label{eq:M-split}
%     \notag
%     \begin{split}
%     %-\iprod{\nextu-\optu}{\nu^i}
%     \iprod{\nextu-\thisu}{\nextu-\realoptu}_M
%     &
%     = \norm{\nextu-\thisu}_{M}^2
%          + \iprod{\thisu-\realoptu}{\nextu-\thisu}_{M}
%     \\
%     &
%     = \norm{\nextu-\thisu}_{M}^2
%         - \norm{\thisu-\realoptu}_{M}^2
%          + \iprod{\thisu-\realoptu}{\nextu-\realoptu}_{M},
%     \\
%     &
%     = \norm{\nextu-\thisu}_{M}^2
%         - \norm{\thisu-\realoptu}_{M}^2
%         + \norm{\nextu-\realoptu}_{M}^2
%         \\ & \phantom{ = }
%          + \iprod{\thisu-\nextu}{\nextu-\realoptu}_{M}.
%     %\\
%     %&
%     %\ge \norm{\nextu-\thisu}_{M}^2
%     %    - \norm{\thisu-\realoptu}_{M}^2
%     %    + \norm{\nextu-\realoptu}_{M}^2.
%     \end{split}
% \end{equation}
% That is
\begin{equation}
    \label{eq:M-split}
    \iprod{\nextu-\thisu}{\nextu-\realoptu}_M
    = \frac{1}{2}\norm{\nextu-\thisu}_{M}^2
        - \frac{1}{2}\norm{\thisu-\realoptu}_{M}^2
        + \frac{1}{2}\norm{\nextu-\realoptu}_{M}^2.
\end{equation}
In particular
\begin{equation}
    \label{eq:h-est-ma}
    \notag
    %\begin{split}
        \iprod{M_i(\thisu-\nextu)}{S_i^*(\nextu - \realoptu)}
    %    &
        =
        % -\iprod{M_i(\nextu-\thisu)}{S_i^*(\nextu - \realoptu)}
        % \\
        % &
        % =
        % -\iprod{\nextu-\thisu}{\nextu - \realoptu}_{S_i M_i}
        % \\
        % &
        %=
        -\frac{1}{2}\norm{\nextu-\thisu}_{S_i  M_i}^2
        + \frac{1}{2}\norm{\thisu-\realoptu}_{S_i M_i}^2
        - \frac{1}{2}\norm{\nextu-\realoptu}_{S_i M_i}^2.
    %\end{split}
\end{equation}
Using \eqref{eq:simigamma-simiplus} to estimate $\frac{1}{2}\norm{\nextu-\realoptu}_{S_i M_i}^2$ and \eqref{eq:simipos} to eliminate $\frac{1}{2}\norm{\nextu-\thisu}_{S_i  M_i}^2$ yields
\begin{equation}
    \label{eq:h-est-ma-x}
    \iprod{M_i(\thisu-\nextu)}{S_i^*(\nextu - \realoptu)}
    \le 
        \frac{1}{2}\norm{\thisu-\realoptu}_{S_i M_i}^2
        - \frac{1}{2}\norm{\nextu-\realoptu}_{S_{i+1} M_{i+1}}^2
        + \frac{1}{2}\norm{\nextu-\realoptu}_{S_i\bar\Gamma_i}^2.
\end{equation}
Combining \eqref{eq:sc-est} and \eqref{eq:h-est-ma-x} through \eqref{eq:prox-update}, we thus obtain
\begin{equation}
    \label{eq:desc-est-1}
    \frac{1}{2}\norm{\nextu-\realoptu}_{S_{i+1} M_{i+1}}^2
    \le
    \frac{1}{2}\norm{\thisu-\realoptu}_{S_i M_i}^2
    +D_{i+1}.
\end{equation}
Summing \eqref{eq:desc-est-1} over $i=1,\ldots,N-1$, and applying \eqref{eq:simipos} to estimate
\[
     \frac{\delta}{2}\norm{\nextx-\realoptx}_{\invstar\TauTilde_N\inv\Tau_N} \le \frac{1}{2}\norm{\nextu-\realoptu}_{S_{N} M_{N}}^2,
\]
we obtain \eqref{eq:convergence-result-main}.
\end{proof}

\section{Scalar diagonal updates and the ergodic duality gap}
\label{sec:scalar0}
%%%

One relatively easy way to satisfy \eqref{eq:g-strong-monotone}, \eqref{eq:f-monotone}, \eqref{eq:simigamma-simiplus} and \eqref{eq:simipos}, is to take the ``diagonal'' step length operators $\TauHat_i$ and $\TauTilde_i$ as equal scalars. Another good starting point would be to choose $\TauTilde_i=\Tau_i$. We however do not explore this route in the present work, instead specialising now Theorem \ref{thm:convergence-result-main} to the scalar case. We then explore ways to add estimates of the ergodic duality gap into \eqref{eq:convergence-result-main}. While this would be possible in the general framework through convexity notions analogous to \eqref{eq:g-strong-monotone} and \eqref{eq:f-monotone}, the resulting gap would not be particularly meaningful.
We therefore concentrate on the scalar diagonal updates to derive estimates on the ergodic duality gap.

\begin{Algorithm}
    \caption{Primal-dual algorithm with partial acceleration---partially scalar}
    \label{alg:alg-scalar}
    \begin{subequations}
    \label{eq:alg-scalar-updates}
    \begin{algorithmic}[1]
    \REQUIRE %$\Sigma_0$, $\Tau_0$, and $\TauTilde_0$ satisfying \eqref{eq:tauhat-cond}. 
    $F^*$ and $G$ satisfying \eqref{eq:g-strong-monotone-scalar} and \eqref{eq:f-monotone-scalar}  for some sets $\UTildeSet$, $\UHatSet$, and $0 \le \Gamma \in \L(X; X)$.
    A choice of $\delta \in (0, 1)$.
    Initial invertible step length operators $\Tau_0 \in \mathcal{Q}$ and $\Sigma_0 \in \L(Y; Y)$, as well as step length parameter $\tilde\tau_0 > 0$.
    \STATE Choose initial iterates $x^0 \in X$ and $y^0 \in Y$.
    \REPEAT
    \STATE Find $\tilde\omega_i>0$,  $\OverRelax_i \in \linear(X; X)$, and $\Gamma_i \in [0, \Gamma] + \UTildeSet$ satisfying
    \begin{gather}
        \label{eq:alg-scalar-paramupd}
        %\notag
        \tilde\omega_i(I+2\Gamma_i \Tau_i)\OverRelax_i 
         \ge I,
         %\\
         \quad\text{and}\quad
         %\label{eq:alg-scalar-paramupd-2}
         \Tau_i\OverRelax_i \in \mathcal{Q}.
    \end{gather}
    \STATE Set
    \begin{equation}
        %\notag%
        \label{eq:alg-scalar-tauupd}
        \Tau_{i+1} \defeq \Tau_i\OverRelax_i,
        \quad\text{and}\quad
        \tilde\tau_{i+1} \defeq \tilde\tau_i\tilde\omega_i.
    \end{equation}
    \STATE Find $\Sigma_{i+1} \in \linear(Y; Y)$ and $R_{i} \in \UHatSet$ satisfying
    \begin{align}
        \label{eq:alg-scalar-sigmaupd-2}
        %\notag
        \inv\Sigma_{i} + 2 R_{i} & \ge \inv{\tilde\omega_{i}}\inv\Sigma_{i+1} \ge  \inv{(1-\delta)}  K \Tau_i K^*.
        %\ge  \inv{(1-\delta)}  K \Tau_i K^*.
    \end{align}
    \STATE Perform the updates
    \label{eq:alg-scalar}
    \begin{align}
        \notag%\label{eq:alg-scalar-nextx}
        \nextx & \defeq (I+\Tau_i \subdiff G)^{-1}(\thisx - \Tau_i K^* y^{i}),\\
        \notag%\label{eq:alg-scalar-overnextx}
        \overnextx & \defeq \tilde\omega_i (\nextx-\thisx)+\nextx,
        \\
        \notag%\label{eq:alg-scalar-nexty}
        \nexty & \defeq (I+\Sigma_{i+1} \subdiff F^*)^{-1}(\thisy + \Sigma_{i+1} K \overnextx).
    \end{align}   
    \UNTIL a stopping criterion is fulfilled.
    \end{algorithmic}
    \end{subequations}
\end{Algorithm}

%%%
\subsection{Scalar specialisation of Algorithm \ref{alg:alg}}
\label{sec:scalar}
%%%

We take $\tilde\OverRelax_i=\tilde\omega_i I$, $\TauTilde_i=\tilde\tau_i I$, and $\TauHat_i=\tilde\tau_i I$ for some $\tilde\omega_i, \tilde\tau_i > 0$. With this \eqref{eq:tauhatsigma-bound} becomes
\begin{equation}
    \label{eq:tauhatsigma-bound-scalar}
    \tag{C2$''$}
    \Tau_i \in \mathcal{Q},
    \quad\text{and}\quad
    \inv\Sigma_{i+1} \ge \tilde\omega_i \inv{(1-\delta)} K \Tau_i K^*,
\end{equation}
%\end{subequations}
while, the diagonal terms cancelling out, \eqref{eq:simigamma-simiplus-2} becomes
\begin{equation}
    \label{eq:simigamma-simiplus-2-scalar}
    \tag{C1$''$}
    \begin{aligned}
        \inv{\tilde\tau}_i(I +2\Gamma_i \Tau_i)\inv\Tau_i & \ge \inv{\tilde\tau}_{i+1} \inv\Tau_{i+1},
        \quad\text{and} \\
        \inv{\tilde\tau}_{i+1}(\inv\Sigma_{i+1} + 2 R_{i+1}) & \ge \inv{\tilde\tau}_{i+2}\inv\Sigma_{i+2}.
    \end{aligned}
\end{equation}
%Here we have used the fact that $\invstar \TauTilde_{i+1} K^* = K^*\invstar\TauHat_{i+1}$ to remove the non-diagonal terms from \eqref{eq:simigamma-simiplus-2}.

For simplicity, we now assume $\phi$ and $\psi$ to satisfy the identities
\begin{equation}
    \label{eq:phi-assumptions}
    \psi_T(-x)=\psi_T(x),
    \quad\text{and}\quad
    \psi_{\alpha T}(x)=\alpha\psi_T(x),
    \quad
    (x \in X;\, 0 < \alpha \in \R).
\end{equation}
%Using \eqref{eq:phi-assumptions} to cancel out $\tilde\tau$, 
The monotonicity conditions \eqref{eq:g-strong-monotone} and \eqref{eq:f-monotone} then simplify into
\begin{gather}
    \label{eq:g-strong-monotone-scalar}
    \tag{G-pm}
    \iprod{\subdiff G(x') - \subdiff G(x)}{x'-x} \ge %\frac{1}{2}
        \norm{x'-x}_{\Gamma'}^2-\psi_{\Gamma'-\Gamma}({x'-x}),
    %%\\
    \quad (x, x' \in X;\, \Gamma' \in [0, \Gamma] + \UTildeSet),
\intertext{and}
    \label{eq:f-monotone-scalar}
    \tag{F$^*$-pm}
    \iprod{\subdiff F^*(y') - \subdiff F^*(y) }{y'-y} \ge 
    \norm{y'-y}_{R}^2-\phi_{R}({y'-y}),
    %%\\
    \quad (y, y' \in Y;\, R \in \UHatSet).
\end{gather}

We have thus converted the main conditions \eqref{eq:simipos}, \eqref{eq:simigamma-simiplus}, \eqref{eq:g-strong-monotone}, and \eqref{eq:f-monotone} of Theorem \ref{thm:convergence-result-main} into the respective conditions \eqref{eq:tauhatsigma-bound-scalar}, \eqref{eq:simigamma-simiplus-2-scalar}, \eqref{eq:g-strong-monotone-scalar}, and \eqref{eq:f-monotone-scalar}.
Rewriting \eqref{eq:simigamma-simiplus-2-scalar} in terms of $\Omega_i$ and $\tilde\omega_i$ satisfying $\Tau_{i+1}=\Tau_i\Omega_i$ and $\tilde\tau_{i+1}=\tilde\tau_i\tilde\omega_i$, we reorganise \eqref{eq:simigamma-simiplus-2-scalar} and \eqref{eq:tauhatsigma-bound-scalar} into the parameter update rules \eqref{eq:alg-scalar-updates} of Algorithm \ref{alg:alg-scalar}.
For ease of expression, we introduce there $\Sigma_0$ and $R_0$ as dummy variables that are not used anywhere else.
Equating $\overnext{w}=K\overnext{x}$, we observe that  Algorithm \ref{alg:alg-scalar} is an instance of Algorithm \ref{alg:alg}.
Observe that $\tilde\tau_i$ and $\hat\tau_i$ disappear from the algorithm aside from the residual factor $\tilde\omega_i$, which can give different over-relaxation rates in the rule for $\bar x^{i+1}$ compared to $\omega_i$ in \eqref{eq:cp}.
Moreover, the parameter $\tilde\tau_i$ will still play a critical role in our study of convergence rate estimates.

\begin{example}[The method of Chambolle and Pock]
    \label{ex:chambolle-pock}
    Let $G$ be strongly convex with factor $\gamma \ge 0$. We take $\Tau_i=\tau_i I$, $\TauTilde_i=\tau_i I$, $\TauHat_i=\tau_i I$, and $\Sigma_{i+1}=\sigma_{i+1} I$ for some scalars $\tau_i, \sigma_{i+1}>0$.
    The conditions \eqref{eq:g-strong-monotone-scalar} and \eqref{eq:f-monotone-scalar} then hold with $\psi \equiv 0$ and $\phi \equiv 0$, while \eqref{eq:tauhatsigma-bound-scalar} and \eqref{eq:simigamma-simiplus-2-scalar} reduce with $R_{i+1}=0$, $\Gamma_i=\gamma I$, $\Omega_i=\omega_i I$, and $\tilde\omega_i=\omega_i$ into 
    \begin{equation}
        \notag
        \omega_i^2(1+2\gamma\tau_i) \ge 1, \quad\text{and}\quad
        (1-\delta)/\norm{K}^2 \ge  \tau_{i+2}\sigma_{i+2} \ge \tau_{i+1}\sigma_{i+1}.
    \end{equation}
    Updating $\sigma_{i+1}$ such that the last inequality holds as an equality, we recover the accelerated PDHGM \eqref{eq:cp}+\eqref{eq:cpaccel}. If $\gamma=0$, we recover the unaccelerated PDHGM.
\end{example}

%%%
\subsection{The ergodic duality gap and convergence}
%%%

To study the convergence of an ergodic duality gap, we now introduce convexity notions analogous to \eqref{eq:g-strong-monotone-scalar} and \eqref{eq:f-monotone-scalar}. Namely, we assume
\begin{multline}
    \label{eq:g-strong-convexity-scalar}
    \tag{G-pc}
    G(x')-G(x) \ge \iprod{\subdiff G(x)}{x'-x} + 
    \frac{1}{2}\norm{x'-x}_{\Gamma'}^2 - \frac{1}{2}\psi_{\Gamma'-\Gamma}({x'-x}),
    \\
    \quad (x, x' \in X;\, \Gamma' \in [0, \Gamma]+\UTildeSet).
\end{multline}
and
\begin{multline}
    \label{eq:f-strong-convexity-scalar}
    \tag{F$^*$-pc}
    F^*(y')-F^*(y) \ge \iprod{\subdiff  F^*(y)}{y'-y} 
    +
    \frac{1}{2}\norm{y'-y}_{R}^2 - \frac{1}{2}\psi_{R}({y'-y}),
    \\
    \quad (y, y' \in Y;\, R \in \UHatSet).
\end{multline}
It is easy to see that these imply \eqref{eq:g-strong-monotone-scalar} and \eqref{eq:f-monotone-scalar}. 

To define an ergodic duality gap, we set
\begin{equation}
    \label{eq:qn}
    \tilde q_N \defeq \sum_{j=0}^{N-1} \inv{\tilde\tau_j},
    \quad\text{and}\quad
    \hat q_N \defeq \sum_{j=0}^{N-1} \inv{\hat\tau_{j+1}},
\end{equation}
and define the weighted averages
\begin{equation}
    \notag
    x_N \defeq  \inv{\tilde q_N} \sum_{i=0}^{N-1} \inv {\tilde\tau_i} \nextx,
    \quad\text{and}\quad
    y_N \defeq \inv{\hat q_N} \sum_{i=0}^{N-1}  \inv {\hat\tau_{i+1}} \nexty.
\end{equation}
With these, the ergodic duality gap at iteration $N$ is defined as
\begin{equation}
    \notag
    \ergGap{N}
    \defeq            
        \bigl(G(x_N) + \iprod{\realopty}{Kx_N}  - F^*(\realopty)\bigr)
        -\bigl(G(\realoptx) + \iprod{y_N}{K \realoptx} - F^*(y_N)\bigr),
\end{equation}
and we have the following convergence result.

\begin{theorem}
    \label{thm:convergence-result-gap}
    Let us be given $K \in \linear(X; Y)$, and convex, proper, lower semicontinuous functionals $G: X \to \extR$ and $F^*: Y \to \extR$ on Hilbert spaces $X$ and $Y$, satisfying \eqref{eq:g-strong-convexity-scalar} and \eqref{eq:f-strong-convexity-scalar} for some sets $\UTildeSet$, $\UHatSet$, and $0 \le \Gamma \in \L(X; X)$.
    Pick $\delta \in (0, 1)$, and suppose \eqref{eq:tauhatsigma-bound-scalar} and \eqref{eq:simigamma-simiplus-2-scalar} are satisfied for each $i \in \N$ for some invertible $\Tau_i \in \mathcal{Q}$, $\Sigma_{i} \in \L(Y; Y)$, %and $0 < \tilde\tau_i \le \tilde\tau_0$,
    \begin{equation}
        \label{eq:tilde-tau-decreasing}
        \tag{C3$''$}
        0 < \tilde\tau_{i} \le \tilde\tau_0,
    \end{equation}
    as well as $\Gamma_i \in ([0, \Gamma] + \UTildeSet)/2$ and $R_{i} \in \UHatSet/2$.
    %\begin{equation}
    %    \label{eq:tilde-tau-decreasing}
    %    \sup_{i \in \N} \tilde\tau_{i} \le \tilde\tau_0.
    %\end{equation}
    Let $\realoptu=(\realoptx, \realopty)$ satisfy \eqref{eq:oc}.    
    Then the iterates of Algorithm \ref{alg:alg-scalar} satisfy
    \begin{equation}
        \label{eq:convergence-result-gap-scalar}
        \frac{\delta}{2}\norm{x^N-\realoptx}_{\inv{\tilde\tau_N} \inv \Tau_N}^2
        + \tilde q_N\ergGap{N}
        \le
        C_0
        +
        \sum_{i=0}^{N-1} D_{i+1}.
    \end{equation}
    Here $C_0$ is as in \eqref{eq:diplus1}, and $D_{i+1}$ simplifies into
    \begin{equation}
        \label{eq:diplus1-scalar}
        D_{i+1} =
            \inv{\tilde\tau_i}\psi_{(\Gamma_i-\Gamma)}({x^{i+1}-\realoptx})
            +
            \inv{\tilde\tau_{i+1}}\phi_{R_{i+1}}({y^{i+1}-\realopty}).
    \end{equation}

    If only \eqref{eq:g-strong-monotone-scalar} and \eqref{eq:f-monotone-scalar} hold instead of \eqref{eq:g-strong-convexity-scalar} and \eqref{eq:f-strong-convexity-scalar}, or we take $R_{i} \in \UHatSet$ and $\Gamma_i \in [0,\Gamma]+\UTildeSet$, then \eqref{eq:convergence-result-gap-scalar} holds with $\ergGap{N} \defeq 0$.
\end{theorem}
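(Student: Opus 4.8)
The plan is to follow the proof of Theorem~\ref{thm:convergence-result-main} almost verbatim, the single change being that wherever that proof invokes the monotonicity estimates \eqref{eq:g-strong-monotone-scalar}, \eqref{eq:f-monotone-scalar} I will instead invoke the \emph{one-sided} subgradient inequalities packaged in \eqref{eq:g-strong-convexity-scalar}, \eqref{eq:f-strong-convexity-scalar} (which indeed imply the former). Recall that Algorithm~\ref{alg:alg-scalar} is the scalar instance $\TauTilde_i=\TauHat_i=\tilde\tau_i I$, $\Tau_{i+1}=\Tau_i\OverRelax_i$, $\tilde\tau_{i+1}=\tilde\tau_i\tilde\omega_i$ of Algorithm~\ref{alg:alg}, with \eqref{eq:tauhatsigma-bound-scalar} and \eqref{eq:simigamma-simiplus-2-scalar} the scalar forms of \eqref{eq:simipos} and \eqref{eq:simigamma-simiplus}; hence its iterates solve the preconditioned proximal point step \eqref{eq:prox-update}. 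Testing \eqref{eq:prox-update} by $S_i^*(\nextu-\realoptu)$ and estimating the right-hand side via the three-point identity \eqref{eq:M-split} together with \eqref{eq:simigamma-simiplus-2-scalar} and \eqref{eq:tauhatsigma-bound-scalar} is word-for-word as in Theorem~\ref{thm:convergence-result-main}; the only new work is the lower bound on the left-hand side.

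For that, I would take $g\in\subdiff G(\nextx)$, $f\in\subdiff F^*(\nexty)$ and apply \eqref{eq:g-strong-convexity-scalar} at $(\nextx,\realoptx)$ with $\Gamma'=2\Gamma_i$ (admissible since $\Gamma_i\in([0,\Gamma]+\UTildeSet)/2$) and \eqref{eq:f-strong-convexity-scalar} at $(\nexty,\realopty)$ with $R=2R_{i+1}$, which after rearrangement bound $\iprod{g}{\nextx-\realoptx}$ below by $G(\nextx)-G(\realoptx)+\norm{\nextx-\realoptx}_{\Gamma_i}^2-\tfrac12\psi_{2\Gamma_i-\Gamma}(\nextx-\realoptx)$, and analogously for $\iprod{f}{\nexty-\realopty}$. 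Substituting into $\iprod{H(\nextu)}{S_i^*(\nextu-\realoptu)}$ and keeping, from the bilinear terms $\tfrac{1}{\tilde\tau_i}\iprod{K^*\nexty}{\nextx-\realoptx}-\tfrac{1}{\tilde\tau_{i+1}}\iprod{K\nextx}{\nexty-\realopty}$, exactly the part that reconstructs the off-diagonal of $\tfrac12\norm{\nextu-\realoptu}_{S_i\bar\Gamma_i}^2$, the quadratic and penalty pieces reassemble (using the conventions \eqref{eq:phi-assumptions}) into precisely $\tfrac12\norm{\nextu-\realoptu}_{S_i\bar\Gamma_i}^2-D_{i+1}$ with $D_{i+1}$ as in \eqref{eq:diplus1-scalar}, plus one extra summand:
\[
\iprod{H(\nextu)}{S_i^*(\nextu-\realoptu)}\ge\tfrac12\norm{\nextu-\realoptu}_{S_i\bar\Gamma_i}^2-D_{i+1}+\Delta_i,
\]
\begin{multline*}
\Delta_i:=\tfrac{1}{\tilde\tau_i}\bigl(G(\nextx)-G(\realoptx)+\iprod{\realopty}{K\nextx}\bigr)\\
+\tfrac{1}{\tilde\tau_{i+1}}\bigl(F^*(\nexty)-F^*(\realopty)-\iprod{\nexty}{K\realoptx}\bigr)-\bigl(\tfrac{1}{\tilde\tau_i}-\tfrac{1}{\tilde\tau_{i+1}}\bigr)\iprod{\realopty}{K\realoptx}.
\end{multline*}
Feeding this and the right-hand side estimate into \eqref{eq:prox-update} yields, in place of \eqref{eq:desc-est-1}, the inequality $\tfrac12\norm{\nextu-\realoptu}_{S_{i+1}M_{i+1}}^2+\Delta_i\le\tfrac12\norm{\thisu-\realoptu}_{S_iM_i}^2+D_{i+1}$.

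Summing over $i=0,\dots,N-1$, the $S_iM_i$-terms telescope to $C_0$ and \eqref{eq:tauhatsigma-bound-scalar} bounds $\tfrac12\norm{u^N-\realoptu}_{S_NM_N}^2$ below by $\tfrac{\delta}{2}\norm{x^N-\realoptx}_{\inv{\tilde\tau_N}\inv\Tau_N}^2$, so it remains to show $\sum_{i=0}^{N-1}\Delta_i\ge\tilde q_N\ergGap{N}$. In $\sum_i\Delta_i$ the $(\tfrac{1}{\tilde\tau_i}-\tfrac{1}{\tilde\tau_{i+1}})\iprod{\realopty}{K\realoptx}$-part telescopes to $(\tfrac{1}{\tilde\tau_0}-\tfrac{1}{\tilde\tau_N})\iprod{\realopty}{K\realoptx}$; Jensen's inequality for the convex maps $x\mapsto G(x)+\iprod{\realopty}{Kx}$ and $y\mapsto F^*(y)-\iprod{y}{K\realoptx}$, with the weights defining $x_N$, $y_N$ in \eqref{eq:qn}, gives $\sum_i\tfrac{1}{\tilde\tau_i}(G(\nextx)-G(\realoptx)+\iprod{\realopty}{K\nextx})\ge\tilde q_N(G(x_N)-G(\realoptx)+\iprod{\realopty}{Kx_N})$ and $\sum_i\tfrac{1}{\hat\tau_{i+1}}(F^*(\nexty)-F^*(\realopty)-\iprod{\nexty}{K\realoptx})\ge\hat q_N(F^*(y_N)-F^*(\realopty)-\iprod{y_N}{K\realoptx})$. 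Since $\hat q_N-\tilde q_N=\tfrac{1}{\tilde\tau_N}-\tfrac{1}{\tilde\tau_0}$, collecting terms leaves $\sum_i\Delta_i-\tilde q_N\ergGap{N}\ge(\tfrac{1}{\tilde\tau_N}-\tfrac{1}{\tilde\tau_0})\bigl(F^*(y_N)-F^*(\realopty)-\iprod{y_N-\realopty}{K\realoptx}\bigr)$, which is $\ge0$: the first factor is nonnegative by \eqref{eq:tilde-tau-decreasing}, and the bracket by convexity of $F^*$ with $K\realoptx\in\subdiff F^*(\realopty)$ from \eqref{eq:oc}. This gives \eqref{eq:convergence-result-gap-scalar}. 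If instead only \eqref{eq:g-strong-monotone-scalar}, \eqref{eq:f-monotone-scalar} hold, or we keep $\Gamma_i\in[0,\Gamma]+\UTildeSet$ and $R_i\in\UHatSet$, then no summand $\Delta_i$ is produced and the argument reduces to that of Theorem~\ref{thm:convergence-result-main}, yielding \eqref{eq:convergence-result-gap-scalar} with $\ergGap{N}:=0$.

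I expect the main obstacle to be twofold: checking that, after the one-sided convexity bound, the leftover bilinear and penalty terms recombine into exactly $\tfrac12\norm{\nextu-\realoptu}_{S_i\bar\Gamma_i}^2-D_{i+1}$ (this is where the halving of the admissible sets for $\Gamma_i, R_{i+1}$ and the scaling rules \eqref{eq:phi-assumptions} are needed), and the endpoint bookkeeping in $\sum_i\Delta_i$, where the mismatch between the primal weights $1/\tilde\tau_i$ and the dual weights $1/\hat\tau_{i+1}$ leaves the correction term whose nonnegativity is precisely the content of hypothesis \eqref{eq:tilde-tau-decreasing}. Everything else is a routine rerun of the computation in Theorem~\ref{thm:convergence-result-main}.
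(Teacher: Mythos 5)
Your proposal is correct and follows essentially the same route as the paper's proof: you invoke \eqref{eq:g-strong-convexity-scalar} and \eqref{eq:f-strong-convexity-scalar} with $\Gamma'=2\Gamma_i$ and $R=2R_{i+1}$ in the testing step, reassemble the terms into $\tfrac12\norm{\nextu-\realoptu}_{S_i\bar\Gamma_i}^2-D_{i+1}$ plus per-iteration gap terms, sum, apply Jensen's inequality, and control the leftover via \eqref{eq:tilde-tau-decreasing} and \eqref{eq:oc}. Your $\Delta_i$ and the final correction term $(\inv{\tilde\tau_N}-\inv{\tilde\tau_0})\bigl(F^*(y_N)-F^*(\realopty)-\iprod{y_N-\realopty}{K\realoptx}\bigr)$ coincide exactly with the paper's $\gap^i_+(\nextu,\realoptu)-\gap^i_+(\realoptu,\realoptu)$ and remainder $r_N$, so the difference is purely notational bookkeeping.
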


\begin{remark}
    For convergence of the gap, we must accelerate less (factor $1/2$ on $\Gamma_i$).
\end{remark}

\begin{example}[No acceleration]
    Consider Example \ref{ex:chambolle-pock}, where $\psi \equiv 0$ and $\phi \equiv 0$. If $\gamma=0$, we get ergodic convergence of the duality gap at rate $O(1/N)$. Indeed, we are in the scalar step setting, with $\hat\tau_j=\tilde\tau_j=\tau_0$. Thus presently $\tilde q_N=N\tau_0$.
\end{example}

\begin{example}[Full acceleration]
    With $\gamma>0$ in Example \ref{ex:chambolle-pock}, we know from \cite[Corollary 1]{chambolle2010first} that
    \begin{equation}
         \label{eq:taun-standard-limit}
         \lim_{N \to\infty} N\tau_N\gamma=1.
    \end{equation}
    Thus $\tilde q_N$ is of the order $N^2$. So is $\tilde \tau_N \Tau_N = \tau_N^2 I$. Therefore, \eqref{eq:convergence-result-gap-scalar} shows $O(1/N^2)$ convergence of the squared distance to solution. For $O(1/N^2)$ convergence of the ergodic duality gap, we need to slow down \eqref{eq:cpaccel} to $\omega_i = 1/\sqrt{1+\gamma\tau_i}$.
\end{example}

\begin{remark}
    \label{rem:improved-tau-estimate}
    The result \eqref{eq:taun-standard-limit} can be improved to estimate $\tau_N \le C_\tau/N$ without a qualifier $N \ge N_0$.
    Indeed, from \cite[Lemma 2]{chambolle2010first} we know for the rule $\omega_i=1/\sqrt{1+2\gamma\tau_i}$ that given $\lambda>0$ and $N \ge 0$ with $\gamma\tau_N \le \lambda$, for any $\ell \ge 0$ holds
    \[
        \frac{1}{\gamma\tau_N} + \frac{\ell}{1+\lambda} 
        \le \frac{1}{\gamma\tau_{N+\ell}} 
        \le \frac{1}{\gamma\tau_N} + \ell.
    \]
    If we pick $N=0$ and $\lambda=\gamma\tau_0$, this says
    \[
        \frac{1}{\gamma\tau_0} + \frac{\ell}{1+\gamma\tau_0} 
        \le \frac{1}{\gamma\tau_{\ell}} 
        \le \frac{1}{\gamma\tau_0} + \ell.
    \]
    In particular,
    \[
        \tau_\ell \le \frac{1}{\gamma\left(\frac{1}{\gamma\tau_0} + \frac{\ell}{1+\gamma\tau_0} \right)}
        =
        \frac{1+\gamma\tau_0}{\inv\tau_0+\gamma\ell}
        \le\frac{\inv\gamma+\tau_0}{\ell}.
    \]
    Therefore, $\tau_N \le C_\tau/N$ for $C_\tau \defeq \inv\gamma+\tau_0$.
    Moreover, $\inv\tau_N \le \inv\tau_0 + \gamma N$.
\end{remark}

\begin{proof}[Proof of Theorem \ref{thm:convergence-result-gap}]
The final non-gap estimate is a direct consequence of Theorem \ref{thm:convergence-result-main}, so we concentrate on the gap estimate.
We begin by expanding
\begin{equation}
    \notag
    \begin{split}
        \iprod{H(\nextu)}{S_i^*(\nextu - \realoptu)}
        &
        = \inv {\tilde\tau_i} \iprod{\subdiff G(\nextx)}{\nextx - \realoptx}
        %\\ & \phantom{ = }
         + \inv{\hat\tau_{i+1}} \iprod{\subdiff F^*(\nexty)}{\nexty - \realopty}
        \\  & \phantom{ = }
        + \inv{\tilde\tau_i}\iprod{K^*\nexty}{\nextx-\realoptx}
        - \inv{\hat\tau_{i+1}} \iprod{K \nextx}{\nexty-\realopty}
    \end{split}
\end{equation}
Since then $\Gamma_i \in ([0, \Gamma] + \UTildeSet)/2$, and $R_{i+1} \in \UHatSet/2$, we may take $\Gamma'=2\Gamma_i$ and $R=2R_{i+1}$ in \eqref{eq:g-strong-convexity-scalar} and \eqref{eq:f-strong-convexity-scalar}. It follows
\begin{equation}
    \notag
    \begin{split}
        \iprod{H(\nextu)}{S_i^*(\nextu - \realoptu)}
        &
        \ge \inv{\tilde\tau_i} 
        \left(
            G(\nextx)- G(\realoptx)
            + \frac{1}{2}\norm{\nextx-\realoptx}_{2\Gamma_i}^2
            - \frac{1}{2}\psi_{2\Gamma_i}({\nextx-\realoptx})
        \right)
        \\  & \phantom{ = }
         + \inv{\hat\tau_{i+1}}
         \left(
            F^*(\nexty)- F^*(\realopty)
            + \frac{1}{2}\norm{\nexty-\realopty}_{2R_{i+1}}^2
            - \frac{1}{2}\phi_{2R_{i+1}}({\nexty-\realopty})
        \right)
        \\  & \phantom{ = }
        - \inv{\tilde\tau_i}\iprod{\nexty}{K\realoptx}
        + \inv{\hat\tau_{i+1}}\iprod{\realopty}{K \nextx}
        %\\ & \phantom{ = }
        + (\inv{\tilde\tau_i} - \inv{\hat\tau_{i+1}})\iprod{\nexty}{K \nextx}.
    \end{split}
\end{equation}
Using \eqref{eq:iprod-def} and \eqref{eq:phi-assumptions}, we can make all of the factors ``2'' and ``1/2'' in this expression annihilate each other. With $D_{i+1}$ as in \eqref{eq:diplus1-scalar} (equivalently \eqref{eq:diplus1}) we therefore have
\begin{equation}
    \notag
    \begin{split}
        \iprod{H(\nextu)}{S_i^*(\nextu - \realoptu)}
        &
        \ge
        \inv{\tilde\tau_i} \left( G(\nextx)-G(\realoptx)+\iprod{\realopty}{K\nextx}\right)
        + \norm{\nextx-\realoptx}_{\inv{\tilde\tau_i} \Gamma_i}^2
        \\ & \phantom { = }        
        + \inv{\hat\tau_{i+1}}\left( F^*(\nexty)-F^*(\realopty)-\iprod{\nexty}{K\realoptx}\right)
        + \norm{\nexty-\realopty}_{\inv{\hat\tau_{i+1}}R_{i+1}}^2
        \\ & \phantom{ = }
        + (\inv{\tilde\tau_i} - \inv{\hat\tau_{i+1}})\left(\iprod{\nexty-\realopty}{K(\nextx-\realoptx)}
        %\\ & \phantom{ = }
        - %(\inv{\tilde\tau_i} - \inv{\hat\tau_{i+1}}) 
        \iprod{\realopty}{K\realoptx}
        \right)
        - D_{i+1}.
    \end{split}
\end{equation}
A little bit of reorganisation and referral to \eqref{eq:si-bargammai} for the expansion of $S_i\bar\Gamma_i$ thus gives
\begin{equation}
    \label{eq:gapest-x}
    \begin{split}
        \iprod{H(\nextu)}{S_i^*(\nextu - \realoptu)}    
        &
        \ge
        \inv{\tilde\tau_i} \left( G(\nextx)-G(\realoptx) + \iprod{\realopty}{K\nextx}\right)
        \\ & \phantom{ = }
         + \inv{\hat\tau_{i+1}}\left( F^*(\nexty)-F^*(\realopty) - \iprod{\nexty}{K\realoptx}\right)
        \\ & \phantom{ = }
        - (\inv{\tilde\tau_i} - \inv{\hat\tau_{i+1}})\iprod{\realopty}{K\realoptx}
        %\\ & \phantom{ = }
        + \frac{1}{2}\norm{\nextu-\realoptu}_{S_i \bar\Gamma_i}^2 
        - D_{i+1}.
    \end{split}
\end{equation}

Let us write 
\[
    \begin{split}
    \gap^i_+(\nextu, \realoptu) & \defeq
     \bigl(\inv{\tilde\tau_i}G(\nextx)+ \inv{\tilde\tau_i}\iprod{\realopty}{K\nextx}-\inv{\hat\tau_{i}}F^*(\realopty)\bigr)
     \\ & \phantom{ = }
     -\bigl(\inv{\tilde\tau_{i+1}}G(\realoptx)
         + \inv{\hat\tau_{i+1}}\iprod{\nexty}{K\realoptx}
         - \inv{\hat\tau_{i+1}}F^*(\nexty)\bigr).
    \end{split}
\]
Observing here the switches between the indices ${i+1}$ and $i$ of the step length parameters in comparison to the last step of \eqref{eq:gapest-x}, we thus obtain
\begin{equation}
    \label{eq:gap-sc-est}
        \iprod{H(\nextu)}{S_i(\nextu - \realoptu)}
        \ge \gap^i_+(\nextu, \realoptu) - \gap^i_+(\realoptu,\realoptu)
        +
        \frac{1}{2}\norm{\nextu-\realoptu}_{S_i\bar\Gamma_i}^2
        - D_{i+1}.
\end{equation}
Using \eqref{eq:h-est-ma} and \eqref{eq:prox-update}, we now obtain
\begin{equation}
    \notag
    \frac{1}{2}\norm{\nextu - \realoptu}_{S_{i+1} M_{i+1}}^2
    + \gap^i_+(\nextu, \realoptu) - \gap^i_+(\realoptu, \realoptu)
    \le
    \frac{1}{2}\norm{\thisu - \realoptu}_{S_{i} M_{i}}^2
    + D_{i+1}.
\end{equation}
Summing this for $i=0,\ldots,N-1$ gives with $C_0$ from \eqref{eq:diplus1} the estimate
\begin{equation}
    \label{eq:gap-est-semifinal}
        \frac{1}{2}\norm{u^N-\realoptu}_{S_{N} M_{N}}^2
        + \sum_{i=0}^{N-1} \left(\gap^i_+(\nextu, \realoptu) -\gap^i_+(\realoptu, \realoptu)\right)
        \le
        C_0
        +\sum_{i=0}^{N-1} D_{i+1}.
\end{equation}

We want to estimate the sum of the gaps $\gap^i_+$ in \eqref{eq:gap-est-semifinal}.
Using the convexity of $G$ and $F^*$, we observe
\begin{equation}
    \label{eq:gapest0}
    \sum_{i=0}^{N-1} \inv{\tilde\tau_{i}} G(\nextx)
    \ge
    {\tilde q_N} G(x_N),
    \quad\text{and}\quad
    \sum_{i=0}^{N-1} \inv{\hat\tau_{i+1}} F^*(\nexty)
    \ge
    {\hat q_N} F^*(y_N).
\end{equation}
Also, by \eqref{eq:qn} and simple reorganisation
\begin{align}
    \label{eq:gapest1}
    \sum_{i=0}^{N-1} \inv{\tilde\tau_{i+1}}G(\realoptx)
    &
    =
    {\tilde q_N} G(\realoptx)
    +
    \inv{\tilde\tau_{N}} G(\realoptx) - \inv{\tilde\tau_0} G(\realoptx),
    \quad\text{and}
    \\
    \label{eq:gapest3}
    \sum_{i=0}^{N-1} \inv{\hat\tau_{i}} F^*(\realopty)
    &
    =
    {\hat q_N} F^*(y_N)
    -\inv{\hat\tau_N} F^*(\realopty)
    +\inv{\hat\tau_0} F^*(\realopty).
\end{align}
%BS: Minding that $\hat Q_NK=KQ_N$ by \eqref{eq:tauhat-def},
%Thus
All of \eqref{eq:gapest0}--\eqref{eq:gapest3} together give
\[
    \begin{split}
    \sum_{i=0}^{N-1}
    \gap^i_+(\nextu,\realoptu)
    &
    \ge
    \bigr({\tilde q_N}G(x_N) + \tilde q_N\iprod{\realopty}{Kx_N} - {\hat q_N}F^*(\realopty) \bigl) 
    \\ & \phantom{\ge}
    - \bigl({\tilde q_N}G(\realoptx) + \hat q_N\iprod{y_N}{K\realoptx} - \hat q_NF^*(y_N)\bigr)
    \\ & \phantom{\ge}
    + \left(
        \inv{\tilde\tau_N}G(\realoptx) - \inv{\tilde\tau_0}G(\realoptx)
        + \inv{\hat\tau_N}F^*_{\invstar\TauHat_{N}}(\realoptx) - \inv{\hat\tau_0}F^*(\realopty)
    \right).
    \end{split}
\]
Another use of \eqref{eq:qn} gives
\[
    %\begin{split}
    \sum_{i=0}^{N-1}
    \gap^i_+(\realoptu,\realoptu)
    =
    (\tilde q_N - \hat q_N) \iprod{\realopty}{K\realoptx}
    %\\ & \phantom{=}    
    + \left(
        \inv{\tilde\tau_N} G(\realoptx) - \inv{\tilde\tau_0} G(\realoptx)
        + \inv{\hat\tau_N} F^*(\realoptx) - \inv{\hat\tau_0} F^*(\realopty)
    \right).
    %\end{split}
\]
Thus
\begin{equation}
    \label{eq:gapsumest}
    %\begin{split}
    \sum_{i=0}^{N-1} \bigl(\gap^i_+(\nextu,\realoptu)-\gap^i_+(\realoptu,\realoptu)\bigr)
    %&
    \ge 
    %\bigr({\tilde q_N} G(x_N) + \tilde q_N \iprod{\realopty}{K x_N} - {\hat q_N} F^*(%\realopty) \bigl) 
    %\\ & \phantom{\ge}
    %- \bigl({\tilde q_N}G(\realoptx) + \hat q_N \iprod{y_N}{K\realoptx} - {\hat q_N} F^*%(y_N)\bigr)
    %\\ & \phantom{\ge}
    %- (\tilde q_N - \hat q_N) \iprod{\realopty}{K\realoptx} 
    %=
    \tilde q_N \ergGap{N} + r_N,
    %\end{split}
\end{equation}
where the remainder
\begin{equation}
    \notag
    %\begin{split}
    r_N
    %&
    %=
    %\bigl(\tilde q_N F^*(\hat y)-\tilde F_N^*(\hat y)\bigr) - \bigl(\tilde q_N F^*(y_N)-%\tilde F_N^*(y_N)\bigr)
    %\\
    %&
    =(\tilde q_N-\hat q_N)\left(
        F^*(\hat y)-F^*(y_N) - \iprod{\hat y-y_N}{K \hat x}
        \right).
    %\end{split}
\end{equation}

At a solution $\realoptu=(\realoptx, \realopty)$ to \eqref{eq:oc}, $K \hat x \in \subdiff F^*(\hat y)$, so $r_N \ge 0$ provided $\tilde q_N \le \hat q_N$. But $\tilde q_N-\hat q_N=\inv {\tilde\tau}_0 - \inv {\tilde\tau}_{N}$, so this is guaranteed by our assumption \eqref{eq:tilde-tau-decreasing}.
Using \eqref{eq:gapsumest} in \eqref{eq:gap-est-semifinal} therefore gives
\begin{equation}
    \label{eq:gap-est-final}
        \frac{1}{2}\norm{u^N-\realoptu}_{S_{N} M_{N}}^2
        + \tilde q_N \ergGap{N} + r_N
        \le
        C_0
        +
        \sum_{i=0}^{N-1} D_{i+1}.
\end{equation}
A referral to \eqref{eq:simipos} to estimate $S_NM_N$ from below shows \eqref{eq:convergence-result-gap-scalar}, concluding the proof.
\end{proof}

\begin{remark}
    We only used the assumption $\tilde\tau_i=\hat\tau_i$ to bound $r_N \ge 0$.
    It is possible to streamline the proof if in addition to this we assume $\{\tilde\tau_i\}$ to be non-increasing instead of merely satisfying \eqref{eq:tilde-tau-decreasing}, and define $y^N$ based on $\tilde q^N$ instead of $\hat q^N$.
\end{remark}
%%%
\section{Convergence rates in special cases}
\label{sec:special}
%%%

To derive a practical algorithm, we need to satisfy the update rules \eqref{eq:simigamma-simiplus} and \eqref{eq:simipos}, as well as the partial monotonicity conditions \eqref{eq:g-strong-monotone} and \eqref{eq:f-monotone}. As we have already discussed in Section \ref{sec:scalar0}, this is easiest when for some $\tilde\tau_i>0$ we set
\begin{equation}
    \label{eq:tau-scalar-assumption}
    \TauTilde_i=\tilde\tau_i I,
    \quad\text{and}\quad
    \TauHat_i=\tilde\tau_i I.
\end{equation}
The result is Algorithm \ref{alg:alg-scalar}, whose convergence we studied in Theorem \ref{thm:convergence-result-gap}. Our task now is to verify its conditions, in particular \eqref{eq:g-strong-convexity-scalar} and \eqref{eq:f-strong-convexity-scalar} (alternatively \eqref{eq:f-monotone-scalar} and \eqref{eq:g-strong-monotone-scalar}), as well as \eqref{eq:simigamma-simiplus-2-scalar}, \eqref{eq:tauhatsigma-bound-scalar}, and \eqref{eq:tilde-tau-decreasing} for $\Gamma$ of the projection form $\gamma P$.
% This will give the convergence estimate \eqref{eq:convergence-result-gap-scalar}, which we then analyse in detail, to yield mixed $O(1/N^2) + O(1/N)$ rates.

%%%
\subsection{An approach to updating $\Sigma$}
\label{sec:diagonal}
%%%

We have not yet defined an explicit update rule for $\Sigma$, merely requiring that it has to satisfy \eqref{eq:tauhatsigma-bound-scalar} and \eqref{eq:simigamma-simiplus-2-scalar}. The former in particular requires
\begin{equation}
    %\label{eq:tauhatsigma-bound-scalar-repeat}
    \notag
    \inv\Sigma_{i+1} \ge \tilde\omega_i \inv{(1-\delta)} K \Tau_i K^*.
\end{equation}
Hiring the help of some linear operator $\mathcal{F}\in \linear(\linear(Y; Y); \linear(Y;Y))$ satisfying
\begin{equation}
    \label{eq:ff-lower-bound}
    \mathcal{F}(K \Tau_i K^*) \ge K \Tau_i K^*,
\end{equation}
our approach is to define
\begin{equation}
    \label{eq:tauhatsigma-bound-special-def0}
    \inv\Sigma_{i+1} \defeq {\tilde\omega_i}\inv{(1-\delta)} \mathcal{F}(K \Tau_i K^*).
\end{equation}
Then \eqref{eq:tauhatsigma-bound-scalar} is satisfied provided $\inv \Tau_i \in \mathcal{Q}$. Since
$
    \inv{\tilde\tau_{i+1}}\inv{\Sigma_{i+1}}
    =\inv{\tilde\tau_i}\inv{(1-\delta)} \mathcal{F}(K \Tau_i K^*),
$
the condition \eqref{eq:simigamma-simiplus-2-scalar} reduces into the satisfaction for each $i \in \N$ of
\begin{subequations}
\label{eq:simigamma-simiplus-2-special-simple}
\begin{align}
    \label{eq:simigamma-simiplus-2-special-simple-1}
    \inv{\tilde\tau_i} (I + 2 \Gamma\Tau_i) \inv\Tau_i - \inv{\tilde\tau_{i+1}} \inv\Tau_{i+1}
    & \ge -2\inv{\tilde\tau_i}(\Gamma_i-\Gamma), 
    \quad\text{and}
    \\
    \label{eq:simigamma-simiplus-2-special-simple-2}
    \frac{1}{1-\delta}\left( \inv{\tilde\tau_{i}}\mathcal{F}\left(K\Tau_{i}K^*\right) - \inv{\tilde\tau_{i+1}}\mathcal{F}\left(K\Tau_{i+1} K^*\right)\right)
    & \ge -2 \inv{\tilde\tau_{i+1}} R_{i+1}.
\end{align}
\end{subequations}
To apply Theorem \ref{thm:convergence-result-gap}, all that remains is to verify in special cases these conditions together with \eqref{eq:tilde-tau-decreasing} and the partial strong convexity conditions \eqref{eq:g-strong-convexity-scalar} and \eqref{eq:f-strong-convexity-scalar}.
% (alternatively \eqref{eq:g-strong-monotone-scalar} and \eqref{eq:f-monotone-scalar}).

%%%
\subsection{When $\Gamma$ is a projection}
\label{sec:projective}
%%%

%We now limit our attention to the simple projective case already discussed in the introduction.
We now take $\Gamma=\bar \gamma P$ for some $\bar\gamma>0$, and a projection operator $P \in \L(X; X)$: idempotent, $P^2=P$, and self-adjoint, $P^*=P$. We let $P^\perp \defeq I-P$. Then $P^\perp P=P P^\perp=0$. With this, we assume for some $\bar\gamma^\perp>0$ that
\begin{equation}
    \label{eq:bargamma}
    [0, \bar\gamma^\perp P^\perp] \subset \UTildeSet.
\end{equation}

To unify our analysis for gap and non-gap estimates of Theorem \ref{thm:convergence-result-gap}, we now pick $\lambda=1/2$ in the former case, and $\lambda=1$ in the latter. 
We then pick $0 \le \gamma \le \lambda\bar\gamma$, and $0 \le \gamma_i^\perp \le \lambda\bar\gamma^\perp$, and set
\begin{equation}
    \label{eq:projection-tau}
    \Tau_i=\tau_i P+\tau_i^\perp P^\perp,
    \quad
    \OverRelax_i=\omega_i P+\omega_i^\perp P^\perp,
    \quad\text{and}\quad
    \Gamma_i = \gamma P + \gamma_i^\perp P^\perp.
\end{equation}
With this,  $\tau_i, \tau_i^\perp > 0$ guarantee $T_i \in \mathcal{Q}$.
Moreover, $\Gamma_i \in \lambda([0, \Gamma] + \UTildeSet)$, exactly as required in both the gap and the non-gap cases of Theorem \ref{thm:convergence-result-gap}.

Since
\[
    %\begin{split}
    K\Tau_{i}K^*
    %&
    =  \tau_{i} KPK^*
    +\tau_{i}^\perp KP^\perp K^*
    %\\
    %&
    = (\tau_{i} - \tau_{i}^\perp) KPK^*
    + \tau_{i}^\perp KK^*,
    %\end{split}
\]
we are encouraged to take
\begin{equation}
    \label{eq:both-penalties-ff}
    \mathcal{F}(K \Tau_{i} K^*)
    \defeq
     \max\{0, \tau_{i} - \tau_{i}^\perp\} \norm{KP}^2 I
    + \tau_{i}^\perp \norm{K}^2 I.
\end{equation}

\begin{remark}
If we required $\tau_i^\perp \ge \tau_i$, a simpler choice would be $\mathcal{F}(K \Tau_{i} K^*) = \tau_{i}^\perp \norm{K}^2 I$. Numerical experiments however suggest $\tau_0^\perp \ll \tau_0$ being beneficial. Nevertheless, for large enough $i$, the condition $\tau_i^\perp \ge \tau_i$ will hold in our algorithms.
\end{remark}

Observe that \eqref{eq:both-penalties-ff} satisfies \eqref{eq:ff-lower-bound}. 
Inserting \eqref{eq:both-penalties-ff} into \eqref{eq:tauhatsigma-bound-special-def0}, we obtain
\begin{equation}
    \label{eq:projection-sigma}
    \Sigma_{i+1}=\sigma_{i+1} I
    \quad\text{with}\quad
    \inv\sigma_{i+1} =  \frac{\tilde\omega_i}{1-\delta}\left(\max\{0, \tau_{i} - \tau_{i}^\perp\} \norm{KP}^2 + \tau_i^\perp \norm{K}^2\right).
\end{equation}
Since $\Sigma_{i+1}$ is a scalar,  \eqref{eq:simigamma-simiplus-2-special-simple-2}, we also take $R_{i+1}=\rho_{i+1} I$, assuming for some $\bar\rho>0$ that
\[
    [0, \bar\rho I] \subset \UHatSet.
\]
Setting
\[
    \eta_i 
    \defeq
    \inv{\tilde\tau_i}\max\{0, \tau_{i} - \tau_{i}^\perp\}
    -
    \inv{\tilde\tau_{i+1}}\max\{0, \tau_{i+1} - \tau_{i+1}^\perp\}
\]
we thus expand \eqref{eq:simigamma-simiplus-2-special-simple} as
\begin{subequations}
\label{eq:simigamma-simiplus-2-special-simple-projection}
\begin{align}
    \label{eq:simigamma-simiplus-2-special-simple-projection-1}
    \inv{\tilde\tau_i} (1 + 2 \gamma\tau_i) \inv\tau_i - \tilde\tau_{i+1} \inv\tau_{i+1}
    & \ge 0,
    \\
    \label{eq:simigamma-simiplus-2-special-simple-projection-2}
    \inv{\tilde\tau_i}\tau_i^{\perp,-1} - \inv{\tilde\tau_{i+1}} \tau_{i+1}^{\perp,-1}
    & \ge -2 \inv{\tilde\tau_i} \gamma_i^\perp,
    \\
    \label{eq:simigamma-simiplus-2-special-simple-projection-3}
    \frac{1}{1-\delta} \left(\eta_i \norm{KP}^2+(\inv{\tilde\tau_i} \tau_i^\perp-\inv{\tilde\tau_{i+1}} \tau_{i+1}^\perp)\norm{K}^2\right) 
    & \ge -2 \inv{\tilde\tau_{i+1}} \rho_{i+1}.
    % %\quad\text{and}
    % \\
    % \label{eq:simigamma-simiplus-2-special-simple-projection-4}
    % \frac{1}{1-\delta} (\inv{\tilde\tau_i} \tau_i^\perp-\inv{\tilde\tau_{i+1}} \tau_{i+1}^\perp)\norm{K}^2 
    % & \ge -2 \inv{\tilde\tau_{i+1}} \rho_{i+1}^\perp.
\end{align}
\end{subequations}

%%%
%\paragraph{A restriction on $\phi$ and $\psi$}
%%%
We are almost ready to state a general convergence result for projective $\Gamma$.
However, we want to make one more thing more explicit.
Since $\Gamma_i-\Gamma=\gamma_i^\perp P^\perp$ and $R_{i+1}=\rho_{i+1} I$, we suppose for simplicity that
\begin{equation}
    \label{eq:phipsi-restrict}
    \phi_{R_{i+1}}(y)=\rho_{i+1} \phi(y)
    \quad\text{and}\quad
    \psi_{\Gamma_i-\Gamma}(x)
    =\gamma_i^\perp\psi^\perp(P^\perp x)
\end{equation}
for some $\phi: Y \to \R$ and $\psi^\perp: P^\perp X \to \R$.
The conditions \eqref{eq:g-strong-convexity-scalar} and \eqref{eq:f-strong-convexity-scalar} reduce in this case to the satisfaction for some $\bar\gamma, \bar\gamma^\perp, \bar\rho>0$ of
\begin{multline}
    \label{eq:g-strong-convexity-scalar-restrict}
    \tag{G-pcr}
    %\iprod{\subdiff G(x') - \subdiff G(x)}{x'-x} \ge %\frac{1}{2}
    G(x') - G(x) \ge \iprod{\subdiff G(x)}{x'-x} +
        \frac{\bar\gamma}{2}\norm{P(x'-x)}^2+\frac{\gamma^\perp}{2}\left(\norm{P^\perp(x'-x)}^2-\psi(P^\perp({x'-x}))\right),
    \\
    \quad (x, x' \in X;\, 0 \le \gamma^\perp \le \bar\gamma^\perp),
\end{multline}
and
\begin{multline}
    \label{eq:f-convexity-scalar-restrict}
    \tag{F$^*$-pcr}
    %\iprod{\subdiff F^*(y') - \subdiff F^*(y) }{y'-y} \ge 
    F^*(y') - F^*(y) \ge \iprod{\subdiff F^*(y)}{y'-y} +
    \frac{\rho}{2}\left(\norm{y'-y}^2-\phi({y'-y})\right),
    \\
    \quad (y, y' \in Y;\, 0 \le \rho \le \bar\rho).
\end{multline}
Analogous variants of \eqref{eq:g-strong-monotone-scalar} and \eqref{eq:f-monotone-scalar} can be formed.

To summarise the findings of this section, we state the following proposition.

\begin{proposition}
    \label{prop:projective}
    Suppose \eqref{eq:g-strong-convexity-scalar-restrict} and \eqref{eq:f-convexity-scalar-restrict} hold for some projection operator $P \in \L(X; X)$ and scalars $\bar\gamma, \bar\gamma^\perp, \bar\rho > 0$.
    % with
    %\[
    %    \Gamma=\bar\gamma P, \quad \UTildeSet \supset [0, \bar\gamma^\perp P^\perp], \quad\text{and}\quad \UHatSet \supset [0, \bar\rho I].
    %\]
    With $\lambda=1/2$, pick $\gamma \in [0, \lambda\bar\gamma]$.
    For each $i \in \N$, suppose \eqref{eq:simigamma-simiplus-2-special-simple-projection} is satisfied with
    \begin{equation}
        \label{eq:prop:projective-step-conds}
        0 \le \gamma_i^\perp \le \lambda\bar\gamma^\perp,
        \quad
        0 \le \rho_i \le \lambda\bar\rho,
        %\quad
        %\tau_i^\perp \ge \tau_i >0,
        \quad\text{and}\quad
        \tilde\tau_0 \ge \tilde\tau_i >0.
    \end{equation}
    If we solve \eqref{eq:simigamma-simiplus-2-special-simple-projection-1} exactly, define $\Tau_i$, $\Gamma_i$, and $\Sigma_{i+1}$ through \eqref{eq:projection-tau} and \eqref{eq:projection-sigma}, and set $R_{i+1}=\rho_{i+1}I$, then the iterates of Algorithm \ref{alg:alg-scalar} satisfy with $C_0$ and $D_{i+1}$ as in \eqref{eq:diplus1} the estimate
    \begin{equation}
        \label{eq:convergence-result-gap-scalar-projective}
        \frac{\delta}{2}\norm{P(x^N-\realoptx)}^2
        + \frac{1}{\inv\tau_0 + 2\gamma}\ergGap{N}
        \le
        \tilde\tau_N\tau_N\left(
        C_0
        +
        \sum_{i=0}^{N-1} D_{i+1}
        \right).
    \end{equation}

    If we take $\lambda=1$, then \eqref{eq:convergence-result-gap-scalar-projective} holds with $\ergGap{N} = 0$.
\end{proposition}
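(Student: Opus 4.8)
The proposition is a specialization of Theorem \ref{thm:convergence-result-gap} to the projective setting, so the entire job is to verify that the hypotheses of that theorem hold with the choices \eqref{eq:projection-tau}, \eqref{eq:projection-sigma}, and to rewrite the resulting estimate \eqref{eq:convergence-result-gap-scalar} into the cleaner form \eqref{eq:convergence-result-gap-scalar-projective}. First I would record that \eqref{eq:g-strong-convexity-scalar-restrict} and \eqref{eq:f-convexity-scalar-restrict}, under the structural assumption \eqref{eq:phipsi-restrict}, are exactly \eqref{eq:g-strong-convexity-scalar} and \eqref{eq:f-strong-convexity-scalar} for $\Gamma=\bar\gamma P$, $[0,\bar\gamma^\perp P^\perp]\subset\UTildeSet$, $[0,\bar\rho I]\subset\UHatSet$; this is stated in the text preceding the proposition, so it can just be cited. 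Then, with $\Gamma_i=\gamma P+\gamma_i^\perp P^\perp$ and $R_{i+1}=\rho_{i+1}I$ and the bounds \eqref{eq:prop:projective-step-conds}, the conditions $\Gamma_i\in\lambda([0,\Gamma]+\UTildeSet)$ and $R_{i+1}\in\UHatSet/2$ (resp.\ $\UHatSet$) hold, as already noted after \eqref{eq:projection-tau}. The definition \eqref{eq:projection-sigma} via \eqref{eq:both-penalties-ff} satisfies \eqref{eq:ff-lower-bound}, hence \eqref{eq:tauhatsigma-bound-scalar} holds whenever $\Tau_i\in\mathcal{Q}$, which is guaranteed by $\tau_i,\tau_i^\perp>0$; and the equivalence shown in Section \ref{sec:diagonal} reduces \eqref{eq:simigamma-simiplus-2-scalar} to \eqref{eq:simigamma-simiplus-2-special-simple}, which expands on the $P$, $P^\perp$ components to exactly \eqref{eq:simigamma-simiplus-2-special-simple-projection}, assumed satisfied. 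Finally $\tilde\tau_0\ge\tilde\tau_i>0$ is \eqref{eq:tilde-tau-decreasing}. So all hypotheses of Theorem \ref{thm:convergence-result-gap} are in place and \eqref{eq:convergence-result-gap-scalar} applies.

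The remaining work is bookkeeping on the left-hand side of \eqref{eq:convergence-result-gap-scalar}. Since $\Gamma_i$ contributes no penalty on $PX$ and $R_{i+1}=\rho_{i+1}I$, the $D_{i+1}$ in \eqref{eq:diplus1-scalar} collapses to the form \eqref{eq:diplus1} stated in the proposition (using \eqref{eq:phipsi-restrict}). For the distance term, $\invstar\TauTilde_N\inv\Tau_N=\tilde\tau_N^{-1}(\tau_N^{-1}P+\tau_N^{\perp,-1}P^\perp)$, so $\norm{x^N-\realoptx}_{\invstar\TauTilde_N\inv\Tau_N}^2\ge \tilde\tau_N^{-1}\tau_N^{-1}\norm{P(x^N-\realoptx)}^2$, and multiplying through by $\tilde\tau_N\tau_N$ gives the $\frac{\delta}{2}\norm{P(x^N-\realoptx)}^2$ term on the left of \eqref{eq:convergence-result-gap-scalar-projective} and the factor $\tilde\tau_N\tau_N$ on the right. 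For the gap term I need $\tilde q_N\ge (\inv\tau_0+2\gamma)^{-1}/(\tilde\tau_N\tau_N)$, equivalently $\tilde\tau_N\tau_N\tilde q_N\ge 1/(\inv\tau_0+2\gamma)$.

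**The main obstacle.** That last inequality — lower-bounding $\tilde\tau_N\tau_N\tilde q_N$ — is the one genuine computation. The idea is to solve \eqref{eq:simigamma-simiplus-2-special-simple-projection-1} as an equality, i.e.\ $\tilde\tau_{i+1}^{-1}\tau_{i+1}^{-1}=\tilde\tau_i^{-1}\tau_i^{-1}(1+2\gamma\tau_i)$; writing $a_i\defeq\tilde\tau_i^{-1}\tau_i^{-1}$ this reads $a_{i+1}=a_i+2\gamma\,\tilde\tau_i^{-1}$. Then $\tilde q_N=\sum_{j=0}^{N-1}\tilde\tau_j^{-1}=\frac{1}{2\gamma}\sum_{j=0}^{N-1}(a_{j+1}-a_j)=\frac{1}{2\gamma}(a_N-a_0)$ when $\gamma>0$, so $\tilde\tau_N\tau_N\tilde q_N=\frac{1}{2\gamma}(1-a_0/a_N)\ge\frac{1}{2\gamma}(1-a_0\tilde\tau_N\tau_N)$; using $a_0=\inv{\tilde\tau_0}\inv\tau_0$ and $\tilde\tau_N\le\tilde\tau_0$, $\tau_N\le\tau_0$ this is not yet sharp enough, so instead I would argue directly: $a_N=a_0+2\gamma\sum_{j=0}^{N-1}\tilde\tau_j^{-1}=a_0+2\gamma\,\tilde q_N$, hence $\tilde\tau_N\tau_N=1/a_N=1/(a_0+2\gamma\tilde q_N)$ and therefore $\tilde\tau_N\tau_N\tilde q_N=\tilde q_N/(a_0+2\gamma\tilde q_N)$, which is increasing in $\tilde q_N$ and, since $\tilde q_N\ge\tilde q_1=\tilde\tau_0^{-1}$, is bounded below by $\tilde\tau_0^{-1}/(\inv{\tilde\tau_0}\inv\tau_0+2\gamma\tilde\tau_0^{-1})=1/(\inv\tau_0+2\gamma)$. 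For $\gamma=0$ one has $\tilde\tau_i=\tilde\tau_0$ if \eqref{eq:simigamma-simiplus-2-special-simple-projection-1} is solved exactly (then $\tau_i=\tau_0$ too), so $\tilde\tau_N\tau_N\tilde q_N=N\to$ large and $1/(\inv\tau_0+2\gamma)=\tau_0\le\tilde\tau_N\tau_N\tilde q_N$ trivially for $N\ge1$. Assembling these pieces — substitute $\tilde q_N\ge (\inv\tau_0+2\gamma)^{-1}/(\tilde\tau_N\tau_N)$ and the distance bound into \eqref{eq:convergence-result-gap-scalar}, divide by $1/(\tilde\tau_N\tau_N)$ — yields \eqref{eq:convergence-result-gap-scalar-projective}, and the $\lambda=1$ case follows identically via the corresponding clause of Theorem \ref{thm:convergence-result-gap} with $\ergGap{N}=0$.
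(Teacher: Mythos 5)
Your proposal is correct and follows essentially the same route as the paper: verify the hypotheses of Theorem \ref{thm:convergence-result-gap} (largely by citing the preceding derivations), multiply \eqref{eq:convergence-result-gap-scalar} by $\tilde\tau_N\tau_N$ and drop the $P^\perp$ part of the weighted norm, and lower-bound $\tilde q_N\tilde\tau_N\tau_N$ using the exact recursion $\inv{\tilde\tau_N}\inv{\tau_N}=\inv{\tilde\tau_0}\inv{\tau_0}+2\gamma\tilde q_N$ from solving \eqref{eq:simigamma-simiplus-2-special-simple-projection-1} as an equality together with $\tilde q_N\ge\inv{\tilde\tau_0}$. This is exactly the paper's argument via \eqref{eq:taun-squared-sum} and \eqref{eq:gap-factor-estimate}; your brief $\gamma=0$ aside is unnecessary since your final formula already covers it.
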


Observe that presently
\begin{equation}
    \label{eq:diplus1-proj}
    D_{i+1} =
        \inv{\tilde\tau_i}\gamma_i^\perp\psi({x^{i+1}-\realoptx})
        +
        \inv{\tilde\tau_{i+1}}\rho_{i+1}\phi({y^{i+1}-\realopty}).
\end{equation}

\begin{proof}
    As we have assumed through \eqref{eq:prop:projective-step-conds}, or otherwise already verified its conditions, we may apply Theorem \ref{thm:convergence-result-gap}. Multiplying \eqref{eq:convergence-result-gap-scalar} by $\tilde\tau_N\tau_N$, we obtain
    \begin{equation}
        \label{eq:convergence-result-gap-scalar-projective0}
        \frac{\delta}{2}\norm{x^N-\realoptx}_P^2
        + \tilde q_N\tilde\tau_N\tau_N \ergGap{N}
        \le
        \tilde\tau_N\tau_N\biggl(
        C_0
        +
        \sum_{i=0}^{N-1} D_{i+1}
        \biggr).
    \end{equation}
    Now, observe that solving \eqref{eq:simigamma-simiplus-2-special-simple-projection-1} exactly gives
    \begin{equation}
        \label{eq:taun-squared-sum}
        \inv{\tilde\tau_{N}}\inv{\tau_N}
        =
        \inv{\tilde\tau_{N-1}}\inv{\tau_{N-1}}
        + 2\gamma\inv{\tilde\tau_{N-1}}
        =
        \inv{\tilde\tau_{0}}\inv{\tau_{0}}
        +
        \sum_{j=0}^{N-1} 2\gamma\inv{\tilde\tau_{j}}
        =
        \inv{\tilde\tau_{0}}\inv{\tau_{0}}
        + 2\gamma \tilde q_N.
    \end{equation}
    Therefore, we have the estimate
    \begin{equation}
        \label{eq:gap-factor-estimate}
        \tilde q_N\tilde\tau_{N}\tau_N
        =
        \frac{\tilde q_N}{\inv{\tilde\tau_{0}}\inv{\tau_{0}} + 2\gamma \tilde q_N}
        =
        \frac{1}{\inv{\tilde\tau_{0}}\inv{\tau_{0}} \inv{\tilde q_N}+ 2\gamma}
        \ge
        \frac{1}{\inv{\tau_{0}} + 2\gamma}.
    \end{equation}
    With this, \eqref{eq:convergence-result-gap-scalar-projective0} yields \eqref{eq:convergence-result-gap-scalar-projective}.
\end{proof}

%%%
\subsection{Primal and dual penalties with projective $\Gamma$}
%%%

We now study conditions that guarantee the convergence of the sum $\tilde\tau_N\tau_N \sum_{i=0}^{N-1} D_{i+1}$ in \eqref{eq:convergence-result-gap-scalar-projective}.
Indeed, the right-hand-sides of \eqref{eq:simigamma-simiplus-2-special-simple-projection-2} and \eqref{eq:simigamma-simiplus-2-special-simple-projection-3} relate to $D_{i+1}$. 
In most practical cases, which we study below, $\phi$ and $\psi$ transfer these right-hand-side penalties into simple linear factors within $D_{i+1}$. Optimal rates are therefore obtained by solving \eqref{eq:simigamma-simiplus-2-special-simple-projection-2} and \eqref {eq:simigamma-simiplus-2-special-simple-projection-3} as equalities, with the right-hand-sides proportional to each other.
Since $\eta_i \ge 0$, and it will be the case that $\eta_i=0$ for large $i$, we however replace \eqref{eq:simigamma-simiplus-2-special-simple-projection-3} by the simpler condition
\begin{equation}
    \label{eq:simigamma-simiplus-2-special-simple-projection-4}
    \frac{1}{1-\delta} (\inv{\tilde\tau_i} \tau_i^\perp-\inv{\tilde\tau_{i+1}} \tau_{i+1}^\perp)\norm{K}^2 
    \ge -2 \inv{\tilde\tau_{i+1}} \rho_{i+1}.
\end{equation}
Then we try to make the left hand sides of \eqref{eq:simigamma-simiplus-2-special-simple-projection-2} and  \eqref{eq:simigamma-simiplus-2-special-simple-projection-4} proportional with only $\tau_{i+1}^\perp$ as a free variable. That is, for some proportionality constant $\zeta > 0$, we solve
\begin{equation}
    \label{eq:prop-balancing}
    \inv{\tilde\tau_i}\tau_i^{\perp,-1} - \inv{\tilde\tau_{i+1}} \tau_{i+1}^{\perp,-1}
    = \zeta (\inv{\tilde\tau_i} \tau_i^\perp-\inv{\tilde\tau_{i+1}} \tau_{i+1}^\perp).
\end{equation}
Multiplying both sides of \eqref{eq:prop-balancing} by  $\inv\zeta \tilde\tau_{i+1}\tau_{i+1}^\perp$, gives on $\tau_{i+1}^\perp$ the quadratic condition
\[
    \tau_{i+1}^{\perp,2}
    +\tilde\omega_i(\inv\zeta\tau_i^{\perp,-1}- \tau_i^\perp) \tau_{i+1}^\perp
    - \inv\zeta
    = 0.
\]
Thus
\begin{equation}
    \label{eq:both-penalties-tauperp-update}
    %\begin{split}
    \tau_{i+1}^\perp
    % &
    % =
    % \frac{-(\tau_i^{\perp,-1}- \zeta \tau_i^\perp)+ \sqrt{(\tau_i^{\perp,-1}- \zeta \tau_i^\perp)^2+4\zeta \tilde\omega_i^{-2}}}{2\zeta\tilde\omega_i^{-1}}
    % \\
    % &
    =
    \frac{1}{2}
    \left(
        \tilde\omega_i(\tau_i^\perp-\inv{\zeta}\tau_i^{\perp,-1})
        + \sqrt{\tilde\omega_i^2(\tau_i^\perp-\inv{\zeta}\tau_i^{\perp,-1})^2+4\inv{\zeta}}
    \right).
    %\end{split}
\end{equation}

Solving \eqref{eq:simigamma-simiplus-2-special-simple-projection-2} and  \eqref{eq:simigamma-simiplus-2-special-simple-projection-4} as equalities, \eqref{eq:prop-balancing} and \eqref{eq:both-penalties-tauperp-update} give
\begin{equation}
    \label{eq:both-penalties-nonnegativity}
    2\inv{\tilde\tau_i} \gamma_i^\perp
    =
    \frac{2\zeta(1-\delta)}{\norm{K}^2} \inv{\tilde\tau_{i+1}} \rho_{i+1}
    =
    \zeta (\inv{\tilde\tau_{i+1}} \tau_{i+1}^\perp-\inv{\tilde\tau_i} \tau_i^\perp).
\end{equation}
Note that this quantity is non-negative exactly when $\omega_i^\perp \ge \tilde\omega_i$.
We have
\[
    \frac{\omega_i^\perp}{\tilde\omega_i}
    =
    \frac{\tau_{i+1}^\perp}{\tau_i^\perp\tilde\omega_i}
    =
    \frac{1}{2}
    \left(
        1-\inv{\zeta}\tau_i^{\perp,-2}
        + \sqrt{(1-\inv{\zeta}\tau_i^{\perp,-2})^2+4\inv{\zeta}\tilde\omega_i^{-2}\tau_i^{\perp,-2}}
    \right).
\]
Thus $\omega_i^\perp \ge \tilde\omega_i$ if
\[
    (1-\inv{\zeta}\tau_i^{\perp,-2})^2+4\inv{\zeta}\tilde\omega_i^{-2}\tau_i^{\perp,-2} \ge (1+\inv{\zeta}\tau_i^{\perp,-2})^2.
\]
This gives the condition $\inv{\zeta}\tilde\omega_i^{-2}\tau_i^{\perp,-2} \ge \inv{\zeta}\tau_i^{\perp,-2}$, which says that \eqref{eq:both-penalties-nonnegativity} is non-negative when $\tilde\omega_i\le 1$.

The next lemma summarises these results for the standard choice of $\tilde\omega_i$.

\begin{lemma}
    \label{lemma:tauperp-rule}
    Let $\tau_{i+1}^\perp$ by given by \eqref{eq:both-penalties-tauperp-update}, and set
    \begin{equation}
        \label{eq:tildeomega=omega}
        \tilde\omega_i=\omega_i=1/\sqrt{1+2\gamma\tau_i}.
    \end{equation}
    Then $\omega_i^\perp \ge \tilde\omega_i$, %and $\inv{\tilde\tau_{i+1}} \tau_{i+1}^\perp \ge \inv{\tilde\tau_i} \tau_i^\perp$, 
    %\eqref{eq:tilde-tau-decreasing} holds,
    $\tilde\tau_i \le \tilde\tau_0$,
    and \eqref{eq:simigamma-simiplus-2-special-simple-projection} is satisfied with the right-hand-sides given by the non-negative quantity in \eqref{eq:both-penalties-nonnegativity}. Moreover,
    \begin{equation}
        \label{eq:taui-tauip1-perp-bound}
        \tau_i^\perp \le \zeta^{-1/2}
        \implies
        \tau_{i+1}^\perp \le \zeta^{-1/2}.
    \end{equation}
\end{lemma}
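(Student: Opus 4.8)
The plan is that almost all the ingredients have been prepared in the text immediately preceding the lemma, so the proof is mainly a matter of assembling those computations and tracking signs. First I would note that with \eqref{eq:tildeomega=omega} one has $\tilde\omega_i=\omega_i=1/\sqrt{1+2\gamma\tau_i}\le 1$, since $\gamma\ge 0$ and $\tau_i>0$; combined with $\tilde\tau_{i+1}=\tilde\tau_i\tilde\omega_i$ this yields $\tilde\tau_i\le\tilde\tau_{i-1}\le\cdots\le\tilde\tau_0$ by induction. The bound $\tilde\omega_i\le 1$ is also precisely the condition singled out just before the lemma for $\omega_i^\perp\ge\tilde\omega_i$: substituting \eqref{eq:tildeomega=omega} into the displayed formula for $\omega_i^\perp/\tilde\omega_i$ and reducing the resulting discriminant inequality gives $\inv\zeta\tilde\omega_i^{-2}\tau_i^{\perp,-2}\ge\inv\zeta\tau_i^{\perp,-2}$, i.e.\ $\tilde\omega_i\le 1$. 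Hence $\omega_i^\perp\ge\tilde\omega_i$.

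Next I would observe that the common value in \eqref{eq:both-penalties-nonnegativity}, namely $\zeta(\inv{\tilde\tau_{i+1}}\tau_{i+1}^\perp-\inv{\tilde\tau_i}\tau_i^\perp)$, is non-negative. Indeed, $\tau_{i+1}^\perp=\tau_i^\perp\omega_i^\perp$ and $\tilde\tau_{i+1}=\tilde\tau_i\tilde\omega_i$ give $\inv{\tilde\tau_{i+1}}\tau_{i+1}^\perp=(\omega_i^\perp/\tilde\omega_i)\inv{\tilde\tau_i}\tau_i^\perp\ge\inv{\tilde\tau_i}\tau_i^\perp$ by the previous step. Therefore the quantities $\gamma_i^\perp$ and $\rho_{i+1}$ fixed by the two equalities in \eqref{eq:both-penalties-nonnegativity} are both $\ge 0$, which is the asserted non-negativity of the right-hand sides of \eqref{eq:simigamma-simiplus-2-special-simple-projection-2} and \eqref{eq:simigamma-simiplus-2-special-simple-projection-3}.

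Then I would verify \eqref{eq:simigamma-simiplus-2-special-simple-projection} line by line. For \eqref{eq:simigamma-simiplus-2-special-simple-projection-1}: since $\tilde\omega_i=\omega_i$ and $\omega_i^2=1/(1+2\gamma\tau_i)$, one computes $\inv{\tilde\tau_{i+1}}\inv{\tau_{i+1}}=\inv{\tilde\tau_i}\inv{\tau_i}\omega_i^{-2}=\inv{\tilde\tau_i}(1+2\gamma\tau_i)\inv{\tau_i}$, so the inequality holds with equality (this is the ``solve \eqref{eq:simigamma-simiplus-2-special-simple-projection-1} exactly'' hypothesis used in Proposition~\ref{prop:projective}). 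For \eqref{eq:simigamma-simiplus-2-special-simple-projection-2}: the update \eqref{eq:both-penalties-tauperp-update} was constructed exactly so that \eqref{eq:prop-balancing} holds, and together with the definition of $\gamma_i^\perp$ from \eqref{eq:both-penalties-nonnegativity} this makes \eqref{eq:simigamma-simiplus-2-special-simple-projection-2} an equality. For \eqref{eq:simigamma-simiplus-2-special-simple-projection-3}: I would first show $\eta_i\ge 0$. From $\tau_{i+1}=\tau_i\omega_i$ and $\tau_{i+1}^\perp=\tau_i^\perp\omega_i^\perp\ge\tau_i^\perp\omega_i$ we get $\tau_{i+1}-\tau_{i+1}^\perp\le\omega_i(\tau_i-\tau_i^\perp)$, hence $\max\{0,\tau_{i+1}-\tau_{i+1}^\perp\}\le\omega_i\max\{0,\tau_i-\tau_i^\perp\}$, and dividing by $\tilde\tau_{i+1}=\tilde\tau_i\omega_i$ gives $\inv{\tilde\tau_{i+1}}\max\{0,\tau_{i+1}-\tau_{i+1}^\perp\}\le\inv{\tilde\tau_i}\max\{0,\tau_i-\tau_i^\perp\}$, i.e.\ $\eta_i\ge 0$. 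Since $\norm{KP}^2\ge 0$ and $1-\delta>0$, condition \eqref{eq:simigamma-simiplus-2-special-simple-projection-3} is then implied by \eqref{eq:simigamma-simiplus-2-special-simple-projection-4}, and the latter holds with equality for the $\rho_{i+1}$ fixed by \eqref{eq:both-penalties-nonnegativity}, because that relation rearranges to $-2\inv{\tilde\tau_{i+1}}\rho_{i+1}=\tfrac{\norm{K}^2}{1-\delta}(\inv{\tilde\tau_i}\tau_i^\perp-\inv{\tilde\tau_{i+1}}\tau_{i+1}^\perp)$.

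Finally, for \eqref{eq:taui-tauip1-perp-bound} I would use that $\tau_{i+1}^\perp$ is the unique positive root of the upward parabola $f(t)\defeq t^2+\tilde\omega_i(\inv\zeta\tau_i^{\perp,-1}-\tau_i^\perp)t-\inv\zeta$ (unique since $f(0)=-\inv\zeta<0$), so that $\tau_{i+1}^\perp\le\zeta^{-1/2}$ is equivalent to $f(\zeta^{-1/2})\ge 0$. A short computation gives $f(\zeta^{-1/2})=\tilde\omega_i\,\zeta^{-1/2}(\inv\zeta\tau_i^{\perp,-1}-\tau_i^\perp)$, which is $\ge 0$ exactly when $(\tau_i^\perp)^2\le\inv\zeta$, i.e.\ when $\tau_i^\perp\le\zeta^{-1/2}$; this is the desired implication. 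There is no genuine obstacle in this proof; the only points that require any care are the non-negativity of $\eta_i$ (resting on $\tau_{i+1}^\perp\ge\tau_i^\perp\omega_i$) and keeping straight which of the three sub-conditions of \eqref{eq:simigamma-simiplus-2-special-simple-projection} are satisfied with equality versus being reduced to the surrogate \eqref{eq:simigamma-simiplus-2-special-simple-projection-4}.
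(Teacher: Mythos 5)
Your proof is correct and follows essentially the same route as the paper's own (very terse) proof, which likewise just assembles the computations preceding the lemma: $\tilde\omega_i\le 1$ gives $\tilde\tau_i\le\tilde\tau_0$ and $\omega_i^\perp\ge\tilde\omega_i$, the choice \eqref{eq:tildeomega=omega} settles \eqref{eq:simigamma-simiplus-2-special-simple-projection-1} so that \eqref{eq:simigamma-simiplus-2-special-simple-projection-2}--\eqref{eq:simigamma-simiplus-2-special-simple-projection-3} hold with right-hand sides \eqref{eq:both-penalties-nonnegativity}, and \eqref{eq:taui-tauip1-perp-bound} comes from estimating \eqref{eq:both-penalties-tauperp-update}. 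You in fact supply slightly more detail than the paper (the explicit check that $\eta_i\ge 0$ and the quadratic-root argument for the last implication), both of which are sound.
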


\begin{proof}
    The choice \eqref{eq:tildeomega=omega} satisfies \eqref{eq:simigamma-simiplus-2-special-simple-projection-1}, so that \eqref{eq:simigamma-simiplus-2-special-simple-projection} in its entirety will be satisfied with the right-hand sides of \eqref{eq:simigamma-simiplus-2-special-simple-projection-2}--\eqref{eq:simigamma-simiplus-2-special-simple-projection-3} given by \eqref{eq:both-penalties-nonnegativity}. 
    The bound %\eqref{eq:tilde-tau-decreasing} 
    $\tilde\tau_i \le \tilde\tau_0$ follows from $\tilde\omega_i \le 1$.
    Finally, the implication \eqref{eq:taui-tauip1-perp-bound} is a simple estimation of \eqref{eq:both-penalties-tauperp-update}.
    %Regarding \eqref{eq:taui-tauip1-perp-bound}, we have from \eqref{eq:both-penalties-tauperp-update} that
    % \begin{equation*}
    %     %\notag
    %     \tau_{i+1}^\perp
    %     =
    %     \frac{1}{2}
    %     \left(
    %         \tilde\omega_i(\tau_i^\perp-\inv{\zeta}\tau_i^{\perp,-1})
    %         + \sqrt{\tilde\omega_i^2(\tau_i^\perp-\inv{\zeta}\tau_i^{\perp,-1})^2+4\inv{\zeta}}
    %     \right)
    %     \le
    %     \zeta^{-1/2}.
    %     \qedhere
    % \end{equation*}
\end{proof}

Specialisation of Algorithm \ref{alg:alg-scalar} to the choices in Lemma \ref{lemma:tauperp-rule} yields the steps of Algorithm \ref{alg:alg-projective-both}. Observe that $\tilde\tau_i$ entirely disappears from the algorithm. To obtain convergence rates, and to justify the initial conditions, we will shortly seek to exploit with specific $\phi$ and $\psi$ the telescoping property stemming from the non-negativity of the last term of \eqref{eq:both-penalties-nonnegativity}.

\begin{Algorithm}
    \caption{Partial acceleration for projective $\Gamma$---primal and dual penalties}
    \label{alg:alg-projective-both}
    \begin{subequations}
    \begin{algorithmic}[1]
    \REQUIRE %$\Sigma_0$, $\Tau_0$, and $\TauTilde_0$ satisfying \eqref{eq:tauhat-cond}. 
    $F^*$ and $G$ satisfying \eqref{eq:g-strong-convexity-scalar-restrict} and \eqref{eq:f-convexity-scalar-restrict} for some $\bar\gamma, \bar\gamma^\perp, \bar\rho \ge 0$, and a projection operator $P \in \L(X; X)$.
    A choice of $\gamma \in [0, \bar\gamma]$.
    Initial step length parameters $\tau_0, \tau^\perp_0 > 0$, a choice of $\delta \in (0, 1)$, and $\zeta \le \tau_0^{\perp, -2}$, all satisfying \eqref{eq:tildec-condition}.
    \STATE Choose initial iterates $x^0 \in X$ and $y^0 \in Y$.
    \REPEAT
    \STATE Set
    \begin{align}
        \notag%\label{eq:alg-projective-both-paramupd}
        \omega_i & =1/\sqrt{1+2\gamma\tau_i},
        \quad\text{and}
         \\
        \notag%\label{eq:alg-projective-both-paramupd-2}
        \omega_i^\perp
        & =
        \frac{1}{2}
        \left(
            (1-\inv{\zeta}\tau_i^{\perp,-2})\omega_i
            + \sqrt{(1-\inv{\zeta}\tau_i^{\perp,-2})^2\omega_i^2+4\inv{\zeta}\tau_i^{\perp,-2}}
        \right).
    \end{align}
    \STATE Update
    \begin{align}
        \notag%
        \tau_{i+1}& =\tau_i\omega_i, \quad
        \tau_{i+1}^\perp =\tau_i^\perp\omega_i^\perp,
        \quad\text{and}\\
        \notag
        \sigma_{i+1} & =  \inv\omega_i(1-\delta)/\left(\max\{0, \tau_{i} - \tau_{i}^\perp\} \norm{KP}^2 + \tau_{i}^\perp \norm{K}^2\right),
        %\sigma^{\perp}_{i+1} & =  \inv\omega_i(1-\delta)/\left( \tau_{i}^\perp \norm{K}^2\right).
    \end{align}
    \STATE With $T_i=\tau_i P + \tau_i^\perp P^\perp$, %and $\Sigma_{i+1}=\sigma_{i+1} \hat P + \sigma_{i+1}^\perp \hat P^\perp$,
    perform the updates
    \label{eq:alg-projective-both}
    \begin{align}
        \label{eq:alg-projective-both-nextx}
        \nextx & \defeq (I+\Tau_i \subdiff G)^{-1}(\thisx - \Tau_i K^* y^{i}),
        \\
        \label{eq:alg-projective-both-overnextx}
        \overnextx & \defeq \omega_i (\nextx-\thisx)+\nextx,
        \\
        \label{eq:alg-projective-both-nexty}
        \nexty & \defeq (I+\sigma_{i+1} \subdiff F^*)^{-1}(\thisy + \sigma_{i+1} K \overnextx).
    \end{align}   
    \UNTIL a stopping criterion is fulfilled.
    \end{algorithmic}
    \end{subequations}
\end{Algorithm}

There is still, however, one matter to take care of. We need $\rho_i \le \lambda\bar\rho$ and $\gamma_i^\perp \le \lambda\bar\gamma^\perp$, although in many cases of practical interest, the upper bounds are infinite and hence inconsequential.
We calculate from \eqref{eq:both-penalties-tauperp-update} and \eqref{eq:tildeomega=omega} that
\begin{equation}
    \label{eq:both-penalties-gamma-bound-verification}
    \begin{split}
    \gamma_i^\perp
    %&
    =
    \frac{\zeta}{2}
    (\inv{\tilde\omega_i} \tau_{i+1}^\perp- \tau_i^\perp)
    %\\
    &
    =
    \frac{1}{2}
    \left(
        -\zeta\tau_i^\perp-\tau_i^{\perp,-1}
        + \sqrt{(\zeta\tau_i^\perp-\tau_i^{\perp,-1})^2+4\zeta\tilde\omega_i^{-2}}
    \right)
    \\
    &
    \le
    \frac{1}{2}
        \sqrt{(\zeta\tau_i^\perp-\tau_i^{\perp,-1})^2 -(\zeta\tau_i^\perp+\tau_i^{\perp,-1})^2 +4\zeta\tilde\omega_i^{-2}}
    \\
    &
    =\sqrt{\zeta(\tilde\omega_i^{-2}-1)}
    =\sqrt{2\zeta \gamma \tau_i}
    \le\sqrt{2\zeta\gamma\tau_0}.
    \end{split}
\end{equation}
Therefore, we need to choose $\zeta$ and $\tau_0$ to satisfy $2\zeta\gamma\tau_0 \le (\lambda\bar\gamma^\perp)^2$. Likewise, we calculate from \eqref{eq:both-penalties-nonnegativity}, \eqref{eq:tildeomega=omega}, and \eqref{eq:both-penalties-gamma-bound-verification} that
\[
    \rho_{i+1}
    =
    \frac{\tilde \omega_i}{c} \gamma_i^\perp
    =
    \frac{\norm{K}^2 \tilde \omega_i }{(1-\delta)\zeta} \gamma_i^\perp
    \le 
    \frac{\norm{K}^2 \tilde \omega_i }{(1-\delta)\zeta} \sqrt{2 \zeta \gamma \tau_i}
    =
    \frac{\norm{K}^2}{(1-\delta)\zeta} \sqrt{2 \zeta \gamma \tau_0}.
\]
This tells us to choose $\tau_0$ and $\zeta$ to satisfy $2 \norm{K}^4/(1-\delta)^2 \zeta^{-1} \gamma \tau_0 \le (\lambda\bar\rho)^2$.
Overall, we get on $\tau_0$ and $\zeta$ the always satisfiable condition
\begin{equation}
    \label{eq:tildec-condition}
    0 < \tau_0 
    \le
    \frac{\lambda^2}{2\gamma}
    \min\left\{
        \frac{\bar\gamma^{\perp,2}}{\zeta},
        \frac{\bar\rho^2 \zeta (1-\delta)^2}{\norm{K}^4}
    \right\}.
    %\frac{\norm{K}^4}{(1-\delta)^2} \gamma\tau_0 \bar\rho^{-2}
    %\le
    %\zeta 
    %\le
    %\frac{2}{\gamma\tau_0} \bar\gamma^2.
\end{equation}
%%%
%\paragraph{Constant $\phi$ and $\psi$}
%%%

If now $\phi \equiv C_\phi$ and $\psi \equiv C_\psi^\perp$, using the non-negativity of \eqref{eq:both-penalties-nonnegativity} when $0 < \tilde\omega_i \le 1$, we may calculate
\begin{equation}
    \label{eq:projective-both-phisum}
    \sum_{i=0}^{N-1} \inv{\tilde\tau}_{i+1}\rho_{i+1} \phi({y^{i+1}-\realopty})
    =
    \frac{\norm{K}^2C_\phi}{2(1-\delta)}\left(
    \sum_{i=0}^{N-1}
        \frac{\inv{\tilde\tau_{i+1}} \tau_{i+1}^\perp}{2}
    -
    \sum_{i=0}^{N-1}
        \frac{\inv{\tilde\tau_i} \tau_i^\perp}{2}
    \right)
    \le
        \frac{\norm{K}^2 C_\phi}{2(1-\delta)}
        \inv{\tilde\tau_{N}} \tau_{N}^\perp.
\end{equation}
Similarly
\begin{equation}
    \label{eq:projective-both-psisum}
    \sum_{i=0}^{N-1} \inv{\tilde\tau}_i \gamma_i^\perp \psi({x^{i+1}-\realoptx})
    \le
        \frac{\zeta C_\psi^\perp}{2} \inv{\tilde\tau_{N}} \tau_{N}^\perp.
\end{equation}
Using these expression to expand \eqref{eq:diplus1-proj}, we obtain the following convergence result.

\begin{theorem}
    \label{thm:projective-both}
    Suppose \eqref{eq:g-strong-convexity-scalar-restrict} and \eqref{eq:f-convexity-scalar-restrict} hold for some projection operator $P \in \L(X; X)$, scalars $\bar\gamma, \bar\gamma^\perp, \bar\rho > 0$,
    % with
    %\[
    %    \Gamma=\gamma P, \quad \UTildeSet = [0, \bar\gamma] P^\perp, \UHatSet = [0, %\bar\rho] \hat P + [0, \bar\rho^\perp] \hat P^\perp,
    %\]
    and
    \[
        \phi \equiv  C_\phi,
        \quad\text{and}\quad
        \psi \equiv C_\psi^\perp
    \]
    for some constants $C_\phi, C_\psi^\perp >0$.
    With $\lambda=1/2$, fix $\gamma \in (0, \lambda\gamma]$.
    Select initial $\tau_0, \tau^\perp_0 > 0$, as well as $\delta \in (0, 1)$ and $\zeta \le (\tau_0^\perp)^{-2}$ satisfying \eqref{eq:tildec-condition}.
    Then Algorithm \ref{alg:alg-projective-both} satisfies for some $C_0,C_\tau>0$ the estimate
    \begin{equation}
        \label{eq:eq:convergence-result-gap-scalar-telescope-const2}
        \frac{\delta}{2}\norm{P(x^N-\realoptx)}^2
        + \frac{1}{\inv\tau_0+2\gamma}\ergGap{N}
        \le
        \frac{C_0 C_\tau^2}{N^2}
        +
        \frac{C_\tau}{2 N}\left(\zeta^{1/2} C_\psi^\perp+\frac{\zeta^{-1/2}\norm{K}^2}{1-\delta}C_\phi\right),
        \quad (N \ge 0).
    \end{equation}

    %If we have only have partial strong monotonicity instead of \eqref{eq:g-strong-convexity-scalar} and \eqref{eq:f-strong-convexity-scalar}, or 
    If we take $\lambda=1$, then \eqref{eq:convergence-result-gap-scalar-projective} holds with $\ergGap{N} = 0$.
\end{theorem}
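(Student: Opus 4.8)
The plan is to feed the parameter rules of Lemma~\ref{lemma:tauperp-rule} into Proposition~\ref{prop:projective}, and then estimate separately the two pieces on the right-hand side of \eqref{eq:convergence-result-gap-scalar-projective}: the initialisation part $\tilde\tau_N\tau_N C_0$ and the penalty part $\tilde\tau_N\tau_N\sum_{i=0}^{N-1}D_{i+1}$.

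First I would check that the hypotheses of Proposition~\ref{prop:projective} hold for the present choices. Lemma~\ref{lemma:tauperp-rule} already gives \eqref{eq:simigamma-simiplus-2-special-simple-projection} and the bound $\tilde\tau_i\le\tilde\tau_0$, so the only thing left to verify is the admissibility $0\le\gamma_i^\perp\le\lambda\bar\gamma^\perp$ and $0\le\rho_i\le\lambda\bar\rho$ from \eqref{eq:prop:projective-step-conds}. Non-negativity is exactly the non-negativity of the quantity in \eqref{eq:both-penalties-nonnegativity}, valid since $\tilde\omega_i=\omega_i\le1$. For the upper bounds I would invoke the chain of estimates \eqref{eq:both-penalties-gamma-bound-verification}, which yields $\gamma_i^\perp\le\sqrt{2\zeta\gamma\tau_0}$ and, through \eqref{eq:both-penalties-nonnegativity}, $\rho_{i+1}\le\norm{K}^2(1-\delta)^{-1}\zeta^{-1}\sqrt{2\zeta\gamma\tau_0}$; the standing assumption \eqref{eq:tildec-condition} on $\tau_0$ and $\zeta$ is precisely what converts these into $\gamma_i^\perp\le\lambda\bar\gamma^\perp$ and $\rho_i\le\lambda\bar\rho$. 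Proposition~\ref{prop:projective} then gives
\[
    \frac{\delta}{2}\norm{P(x^N-\realoptx)}^2
    + \frac{1}{\inv\tau_0+2\gamma}\ergGap{N}
    \le \tilde\tau_N\tau_N\Bigl(C_0+\sum_{i=0}^{N-1}D_{i+1}\Bigr).
\]

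Next I would bound the two pieces on the right. For the penalty sum, since $\phi\equiv C_\phi$ and $\psi\equiv C_\psi^\perp$, the telescoping estimates \eqref{eq:projective-both-phisum} and \eqref{eq:projective-both-psisum}---whose validity again rests on the non-negativity of the last term of \eqref{eq:both-penalties-nonnegativity}---give $\sum_{i=0}^{N-1}D_{i+1}\le\frac{1}{2}\inv{\tilde\tau_N}\tau_N^\perp\bigl(\zeta C_\psi^\perp+\norm{K}^2(1-\delta)^{-1}C_\phi\bigr)$, hence $\tilde\tau_N\tau_N\sum_{i=0}^{N-1}D_{i+1}\le\frac{1}{2}\tau_N\tau_N^\perp\bigl(\zeta C_\psi^\perp+\norm{K}^2(1-\delta)^{-1}C_\phi\bigr)$. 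For the step-length factors, since $\omega_i=\tilde\omega_i=1/\sqrt{1+2\gamma\tau_i}$ is the classical accelerated rule, Remark~\ref{rem:improved-tau-estimate} gives $\tau_N\le C_\tau/N$ with $C_\tau=\inv\gamma+\tau_0$, and because $\tilde\tau$ and $\tau$ evolve by the same factor $\omega_i$ we have $\tilde\tau_N=(\tilde\tau_0/\tau_0)\tau_N$, so $\tilde\tau_N\tau_N\le(\tilde\tau_0/\tau_0)C_\tau^2/N^2$; for the residual direction the implication \eqref{eq:taui-tauip1-perp-bound} of Lemma~\ref{lemma:tauperp-rule} together with the initial condition $\zeta\le(\tau_0^\perp)^{-2}$ gives $\tau_N^\perp\le\zeta^{-1/2}$ by induction on $i$, so $\tau_N\tau_N^\perp\le C_\tau\zeta^{-1/2}/N$. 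Substituting these bounds into the displayed inequality and relabelling $(\tilde\tau_0/\tau_0)C_0$ as $C_0$ produces \eqref{eq:eq:convergence-result-gap-scalar-telescope-const2}; the $\lambda=1$ variant follows verbatim with $\ergGap{N}$ replaced by $0$, exactly as in Proposition~\ref{prop:projective}.

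The whole argument is essentially bookkeeping once the earlier lemmas are available; the one point that genuinely needs care is the uniform-in-$i$ control of $\gamma_i^\perp$ and $\rho_i$ below their ceilings $\lambda\bar\gamma^\perp$ and $\lambda\bar\rho$ in the first step---this is where the coupling condition \eqref{eq:tildec-condition} between $\tau_0$ and $\zeta$ is indispensable, and it also explains why $\tau_0$ must be chosen small when $\gamma$ is large.
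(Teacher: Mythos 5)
Your proposal is correct and follows essentially the same route as the paper: verify \eqref{eq:prop:projective-step-conds} via Lemma \ref{lemma:tauperp-rule} and \eqref{eq:tildec-condition}, apply Proposition \ref{prop:projective}, telescope the penalties via \eqref{eq:projective-both-phisum}--\eqref{eq:projective-both-psisum}, and use $\tau_N^\perp \le \zeta^{-1/2}$ together with $\tau_N \le C_\tau/N$ from Remark \ref{rem:improved-tau-estimate}. The only cosmetic difference is that you carry the ratio $\tilde\tau_0/\tau_0$ and absorb it into $C_0$, whereas the paper simply sets $\tilde\tau_0=\tau_0$ (permissible since $\tilde\tau_i$ never enters the algorithm); both give the stated estimate.
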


\begin{proof}
    During the course of the derivation of Algorithm \ref{alg:alg-projective-both}, we have verified \eqref{eq:simigamma-simiplus-2-special-simple-projection}, solving \eqref{eq:simigamma-simiplus-2-special-simple-projection-1} as an equality.
    Moreover, Lemma \ref{lemma:tauperp-rule} and \eqref{eq:tildec-condition} guarantee \eqref{eq:prop:projective-step-conds}. We may therefore apply Proposition \ref{prop:projective}.
    Inserting \eqref{eq:projective-both-phisum} and \eqref{eq:projective-both-psisum} into \eqref{eq:convergence-result-gap-scalar-projective} and \eqref{eq:diplus1-proj} gives
    \begin{equation}
        \label{eq:convergence-result-gap-scalar-telescope-pre}
        \frac{\delta}{2}\norm{P(x^N-\realoptx)}^2
        + \frac{1}{\inv\tau_0+2\gamma}\ergGap{N}
        \le
        \tau_N\tilde\tau_N
        \left(
            C_0
            +
            \frac{\zeta C_\psi^\perp}{2} \inv{\tilde\tau_{N}} \tau_{N}^\perp
            +
            \frac{\norm{K}^2 C_\phi}{2(1-\delta)}
            \inv{\tilde\tau_{N}} \tau_{N}^\perp
        \right).
    \end{equation}
    The condition $\zeta \le (\tau_0^\perp)^{-2}$ now guarantees $\tau_N^\perp \le \zeta^{-1/2}$ through \eqref{eq:taui-tauip1-perp-bound}.
    Now we note that $\tilde\tau_i$ is not used in Algorithm \ref{alg:alg-projective-both}, so it only affects the convergence rate estimates. We therefore simply take $\tilde\tau_0=\tau_0$, so that $\tilde\tau_N=\tau_N$ for all $N \in \N$. With this and the bound $\tau_N \le C_\tau/N$ from Remark \ref{rem:improved-tau-estimate}, \eqref{eq:eq:convergence-result-gap-scalar-telescope-const2} follows by simple estimation of \eqref{eq:convergence-result-gap-scalar-telescope-pre}.
\end{proof}

\begin{remark}
    As a special case of Algorithm \ref{alg:alg-projective-both}, if we choose $\zeta = \tau_0^{\perp, -2}$, then we can show from \eqref{eq:both-penalties-tauperp-update} that $\tau_i^\perp=\tau_0^\perp=\zeta^{-1/2}$ for all $i \in \N$.
    % If we also take $\tau_0 \le \tau_0^\perp$, then also $\sigma_{i+1}=\sigma_{i+1}^\perp=\inv\omega_i(1-\delta)\zeta^{1/2}/\norm{K}^2$. In this case, the only way Algorithm \ref{alg:alg-projective-both} differs from the original accelerated algorithm \eqref{eq:cp}--\eqref{eq:cpaccel} for fully strongly convex problems is the rule for $\sigma_i$ that keeps it bounded, and the fixed step length parameter $\tau_0^\perp$ on the subspace $P^\perp X$ of $X$.
\end{remark}

%%%
\subsection{Dual penalty only with projective $\Gamma$}
%%%

Continuing with the projective $\Gamma$ setup of Section \ref{sec:projective}, we now study the case $\UTildeSet=\{0\}$, that is, when only the dual penalty $\phi$ is available with $\psi \equiv 0$.
To use Proposition \ref{prop:projective}, we need to satisfy \eqref{eq:prop:projective-step-conds} and \eqref{eq:simigamma-simiplus-2-special-simple-projection}, with \eqref{eq:simigamma-simiplus-2-special-simple-projection-1} exactly. Since $\gamma_i^\perp =0$, \eqref{eq:simigamma-simiplus-2-special-simple-projection-2} becomes 
\begin{equation}
    \label{eq:simigamma-simiplus-2-special-simple-projection-2-dualonly} 
    \inv{\tilde\tau_i}\tau_i^{\perp,-1} - \inv{\tilde\tau_{i+1}} \tau_{i+1}^{\perp,-1} \ge 0.
\end{equation}
With respect to $\tau_{i+1}^{\perp}$, the left hand side of \eqref{eq:simigamma-simiplus-2-special-simple-projection-3}  is maximised (and the penalty on the right hand side minimised) when \eqref{eq:simigamma-simiplus-2-special-simple-projection-2-dualonly} is minimised. Thus we solve \eqref{eq:simigamma-simiplus-2-special-simple-projection-2-dualonly} exactly, which gives
\[
    \tau_{i+1}^{\perp}= \tau_i^{\perp,}\inv{\tilde\omega_i}.
\]
In consequence $\omega_i^\perp=\inv{\tilde\omega_i}$, and \eqref{eq:simigamma-simiplus-2-special-simple-projection-3} becomes
\begin{equation}
    \label{eq:dual-penalty-rho}
    \frac{1}{1-\delta}\eta_i \norm{KP}^2
    +
    \frac{\tilde\tau_i^{-2}}{1-\delta} (1-\tilde\omega_{i}^{-2})\norm{K}^2 
    \ge -2 \inv{\tilde\tau_{i+1}} \rho_{i+1}.
\end{equation}
Here $\eta_i \ge 0$, so we estimate this as $\eta_i=0$ as in \eqref{eq:simigamma-simiplus-2-special-simple-projection-4}.
This suggests to choose
\begin{equation}
    \label{eq:gamma-projection-omega-choice}
    \tilde\omega_{i} \defeq \frac{1}{1+a_i\tilde\tau_i^2}
    \quad\text{and}\quad
    \omega_i \defeq \frac{1}{\tilde\omega_i(1+2\gamma\tau_i)},
\end{equation}
for some, yet undetermined, $a_i>0$. Solving \eqref{eq:dual-penalty-rho} as an equality for $\rho_{i+1}$, then
\[
    2 \inv{\tilde\tau_{i+1}} \rho_{i+1}
    =a_i\frac{\norm{K}^2}{1-\delta}.
\]
This needs $\rho_{i+1} \le \lambda\bar\rho$.
Since $\inv{\tilde\tau_i} \ge \inv{\tilde\tau_0}$, we can satisfy this for large enough $i$ if $a_i \downto 0$, or generally if $\tilde\tau_0$ is small enough and $\{a_i\}$ non-decreasing. In particular, if $\{a_i\}$ is descending, it suffices
\begin{equation}
    \label{eq:dual-penalty-cond}
    a_0 \tau_0^\perp \tilde\tau_0^2 \frac{\norm{K}^2}{2(1-\delta)} \le \bar\rho
\end{equation}

\def\residual{\mu}

Noting that \eqref{eq:gamma-projection-omega-choice} ensures $\tilde\tau_{i+1}^{-2} = \tilde\tau_{i}^{-2} + a_i$, we see that
\[
    \inv{\tilde\tau_N}\inv{\tau_N}
    =\inv{\tilde\tau_0}\inv{\tau_0} + 2\gamma\sum_{i=0}^{N-1} \sqrt{\tilde\tau_0^{-2} + \sum_{j=0}^{i-1} a_j}
    \ge
    2\gamma\sum_{i=0}^{N-1} \sqrt{\tilde\tau_0^{-2} + \sum_{j=0}^{i-1} a_j}
    %\ge 2\gamma\sum_{i=0}^{N-1} \sqrt{\sum_{j=0}^{i-1} a_j}
    =: 1/\residual_0^N.
\]
Assuming $\phi$ to have the structure \eqref{eq:phipsi-restrict}, moreover
\[
        \sum_{i=0}^{N-1} D_{i+1} = \sum_{i=0}^{N-1}
            \phi_{\inv{\tilde\tau_{i+1}} R_{i+1}}({y^{i+1}-\realopty})
        =
        \frac{\norm{K}^2}{2(1-\delta)}
        \sum_{i=0}^{N-1} a_i \phi({y^{i+1}-\realopty}).
\]
Thus the rate \eqref{eq:convergence-result-gap-scalar-projective} in Proposition \ref{prop:projective} states
\begin{equation}
    \label{eq:convergence-result-gap-scalar-projective-dual}
    \frac{\delta}{2}\norm{P(x^N-\realoptx)}^2
    + \frac{1}{\inv\tau_0 + 2\gamma}\ergGap{N}
    \le
    \residual_0^N C_0
    +
    \frac{\norm{K}^2}{2(1-\delta)}
    \residual_1^N
\end{equation}
for
\[
    \residual_1^N \defeq \residual_0^N \sum_{i=0}^{N-1} a_i \phi({y^{i+1}-\realopty}).
\]
The convergence rate is thus completely determined by $\residual_0^N$ and $\residual_1^N$.
%To go further from here, we now consider several cases of $\phi$.

% \paragraph{Zero $\phi$}

\begin{remark}
If $\phi \equiv 0$, that is, if $F^*$ is strongly convex, we may simply pick $\tilde\omega_i=\omega_i=1/\sqrt{1+2\gamma\tau_i}$, that is $a_i=2\gamma$, and obtain from \eqref{eq:convergence-result-gap-scalar-projective-dual} a $O(1/N^2)$ convergence rate.
\end{remark}

% \paragraph{Constant $\phi$}

For a more generally applicable algorithm,
%than the one that would follow from the remark,
suppose $\phi({\nexty-\realopty}) \equiv C_\phi$ as in Theorem \ref{thm:projective-both}. 
We need to choose $a_i$.
One possibility is to pick some $q>0$ and
% $a_i$ with $\tilde\tau_0^{-2} + \sum_{j=0}^{i-1} a_i=\tilde\tau_0^{-2} i^{q}$ for some $q>0$. This is achieved by taking
\begin{equation}
    \label{eq:ai-choice}
    a_i \defeq \tilde\tau_0^{-2}\bigl((i+1)^{q}-i^{q}\bigr).
\end{equation}
This gives
\[
    \sum_{i=0}^{N-1} \sqrt{\tilde\tau_0^{-2} + \sum_{j=0}^{i-1} a_j}
    =\tilde\tau_0^{-1}\sum_{i=0}^{N-1} i^{q/2}
    \ge
    \tilde\tau_0^{-1}
    \int_0^{N-1} x^{q/2} \d x
    = \frac{\tilde\tau_0^{-1}}{1+q/2}(N-1)^{1+q/2},
\]
and
\[
    \sum_{i=0}^{N-1} a_i \le \tilde\tau_0^{-2} N^q.
\]
If $N \ge 2$, we find with $C_a=(1+q/2)/(2^{1+q/2}\lambda\gamma)$ that
\begin{equation}
    \label{eq:dual-only-rates}
    \residual_0^N \le \frac{\tilde\tau_0 C_a}{N^{1+q/2}},
    \quad\text{and}\quad
    \residual_1^N \le \frac{C_a C_\phi}{\tilde\tau_0N^{1-q/2}}.
\end{equation}
The choice $q=0$ gives uniform $O(1/N)$ over both the initialisation and the dual sequence. By choosing $q<2$ large, we can get arbitrarily close to $O(1/N^2)$ rate with respect to the initialisation, at the cost of the rate $\residual_1^N$ with respect to the dual sequence becoming closer and closer to zero.

With these choices, Algorithm \ref{alg:alg-scalar} yields Algorithm \ref{alg:alg-proj}, whose convergence properties are stated in the next theorem.

\begin{Algorithm}
    \caption{Partial acceleration for projective $\Gamma$---dual penalty only}
    \label{alg:alg-proj}
    \begin{subequations}
    \begin{algorithmic}[1]
    \REQUIRE 
    $G$ satisfying \eqref{eq:g-strong-convexity-scalar-restrict} (with $\psi \equiv 0$) for some $\bar\gamma>0$ and a projection operator $P \in \L(X; X)$.
    $F^*$ satisfying \eqref{eq:f-convexity-scalar-restrict} for some $\bar\rho>0$.
    A choice of $\gamma \in [0, \bar\gamma]$ and a decreasing sequence $\{a_i\}_{i=0}^\infty$, for example as in \eqref{eq:ai-choice}.
    Initial step parameters $\tau_0, \tau_0^\perp, \tilde \tau_0 >0$, as well as $\delta \in (0, 1)$, satisfying \eqref{eq:dual-penalty-cond}.
    \STATE Choose initial iterates $x^0 \in X$ and $y^0 \in Y$.
    %\STATE Set $\tilde\omega_{-1} \defeq \sqrt{\tilde\tau_0/\tau_0^\perp}$.
    \REPEAT
    \STATE Set
        \[
            \begin{aligned}
            \tilde \omega_i & \defeq 1/(1+a_i \tilde\tau_i^2),
            &
            \tilde\tau_{i+1} & \defeq \tilde\tau_i\tilde\omega_i,
            &
            \tau_{i+1}^\perp & \defeq \tau_i^\perp/\tilde\omega_i,
            \\
            \omega_i & \defeq \inv{\tilde\omega_i}/(1+ 2 \gamma \tau_i),
            &
            \tau_{i+1} & \defeq \tau_i\omega_i,
            %&
            %\sigma_{i+1} & \defeq %\frac{\alpha}{\tilde\omega_i\tau_i^\perp \norm{K}^2}.
            %\sigma_i \tilde\omega_{i-1}^2/\tilde\omega_i.
            \end{aligned}
        \]
        as well as
        \[
            \sigma_{i+1} =  \inv\omega_i(1-\delta)/\left(\max\{0, \tau_{i} - \tau_{i}^\perp\} \norm{KP}^2 + \tau_{i}^\perp \norm{K}^2\right).
        \]

    \STATE With $\Tau_i \defeq \tau_i P + \tau_i^\perp P^\perp$, perform the updates
    %\label{eq:alg}
    \begin{align}
        \notag%\label{eq:alg-nextx}
        \nextx & \defeq (I+\Tau_i \subdiff G)^{-1}(\thisx - \Tau_i K^* y^{i}),\\
        \notag%\label{eq:alg-overnextx}
        \overnextx & \defeq \tilde\omega_i(\nextx-\thisx)+\nextx,
        \\
        \notag%\label{eq:alg-nexty}
        \nexty & \defeq (I+\sigma_{i+1} \subdiff F^*)^{-1}(\thisy + \sigma_{i+1} K \overnextx).
    \end{align}   
    \UNTIL a stopping criterion is fulfilled.
    \end{algorithmic}
    \end{subequations}
\end{Algorithm}

\begin{theorem}
    \label{thm:projective-dualonly}
    Suppose \eqref{eq:g-strong-convexity-scalar-restrict} and \eqref{eq:f-convexity-scalar-restrict} hold for some projection operator $P \in \L(X; X)$ and  $\bar\gamma, \bar\gamma^\perp, \bar\rho \ge 0$ with $\psi \equiv 0$ and $\phi \equiv C_\phi$ for some constant $C_\phi \ge 0$.
    With $\lambda=1/2$, choose $\gamma \in (0, \lambda\bar\gamma]$, and pick the sequence $\{a_i\}_{i=0}^\infty$ by \eqref{eq:ai-choice} for some $q>0$. Select initial $\tau_0, \tau_0^\perp, \tilde \tau_0 >0$ and $\delta \in (0, 1)$ verifying \eqref{eq:dual-penalty-cond}.
    Then Algorithm \ref{alg:alg-proj} satisfies
    \begin{equation}
        \label{eq:convergence-result-gap-scalar-projective-dual-final}
        \frac{\delta}{2}\norm{P(x^N-\realoptx)}^2
        + \frac{1}{\inv\tau_0 + \gamma}\ergGap{N}
        \le
        \frac{\tilde\tau_0 C_a C_0}{N^{1+q/2}}
        +
        \frac{C_a C_\phi \norm{K}^2}{2(1-\delta)\tilde\tau_0^2 N^{1-q/2}},
        \quad (N \ge 2).
    \end{equation}

    If we take $\lambda=1$, then \eqref{eq:convergence-result-gap-scalar-projective-dual-final} holds with $\ergGap{N} = 0$.
\end{theorem}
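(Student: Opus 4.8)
The plan is to assemble the machinery built up in Section~\ref{sec:projective} and reduce the statement to Proposition~\ref{prop:projective}, in direct analogy with the proof of Theorem~\ref{thm:projective-both}. Algorithm~\ref{alg:alg-proj} is the instance of Algorithm~\ref{alg:alg-scalar} obtained from the projective-$\Gamma$ construction with $\UTildeSet=\{0\}$ and $\psi\equiv 0$; hence, once the hypotheses of Proposition~\ref{prop:projective} are verified for its step-length rules, the generic rate \eqref{eq:convergence-result-gap-scalar-projective} together with the explicit bounds \eqref{eq:dual-only-rates} gives \eqref{eq:convergence-result-gap-scalar-projective-dual-final} at once. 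The $\lambda=1$ case needs no separate argument, since Proposition~\ref{prop:projective} already furnishes the variant with $\ergGap{N}=0$.

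First I would check that the step-length rules satisfy \eqref{eq:simigamma-simiplus-2-special-simple-projection} with \eqref{eq:simigamma-simiplus-2-special-simple-projection-1} holding as an equality. The choice $\omega_i=\inv{\tilde\omega_i}/(1+2\gamma\tau_i)$ makes \eqref{eq:simigamma-simiplus-2-special-simple-projection-1} an identity; the choice $\tau_{i+1}^\perp=\tau_i^\perp/\tilde\omega_i$ makes \eqref{eq:simigamma-simiplus-2-special-simple-projection-2} an identity with $\gamma_i^\perp=0$, as forced by $\UTildeSet=\{0\}$; and, after replacing \eqref{eq:simigamma-simiplus-2-special-simple-projection-3} by the weaker \eqref{eq:simigamma-simiplus-2-special-simple-projection-4} using $\eta_i\ge 0$, the choice \eqref{eq:gamma-projection-omega-choice} of $\tilde\omega_i$ turns it into an identity that determines $R_{i+1}=\rho_{i+1}I$ through $2\inv{\tilde\tau_{i+1}}\rho_{i+1}=a_i\norm{K}^2/(1-\delta)$. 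One then checks the admissibility conditions \eqref{eq:prop:projective-step-conds}: $\gamma_i^\perp=0\le\lambda\bar\gamma^\perp$ is immediate; $0<\tilde\tau_i\le\tilde\tau_0$ follows from $\tilde\omega_i\le 1$ (true because $a_i>0$); the remaining inequality $\rho_{i+1}\le\lambda\bar\rho$ is the only delicate point.

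Establishing $\rho_{i+1}\le\lambda\bar\rho$ for every $i$ is the main obstacle. I would express $\rho_{i+1}$ through the identity above in terms of $\tilde\tau_{i+1}$ and $a_i$, use the telescoping relation $\tilde\tau_{i+1}^{-2}=\tilde\tau_i^{-2}+a_i$ (so $\tilde\tau_{i+1}$ contracts once the partial sums of $\{a_i\}$ grow), and show that for the concrete sequence \eqref{eq:ai-choice} with $0<q<2$ the quantity controlling $\rho_{i+1}$ is non-increasing in $i$ and therefore bounded by its value near $i=0$; requiring that value to be at most $\lambda\bar\rho$ is exactly the always-satisfiable initialisation bound \eqref{eq:dual-penalty-cond} (a cheaper sufficient condition being available when $\{a_i\}$ is non-increasing, which covers $q\le 1$). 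With all hypotheses in force, Proposition~\ref{prop:projective} yields \eqref{eq:convergence-result-gap-scalar-projective-dual}, with $\residual_0^N$ and $\residual_1^N$ as defined there.

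It then remains to carry out the growth estimates \eqref{eq:dual-only-rates}. From $\tilde\tau_{i+1}^{-2}=\tilde\tau_i^{-2}+a_i$ and \eqref{eq:ai-choice}, telescoping gives $\tilde\tau_i^{-2}=\tilde\tau_0^{-2}(1+i^{q})$ and $\sum_{i=0}^{N-1}a_i=\tilde\tau_0^{-2}N^{q}$. Using $\inv{\tilde\tau_N}\inv{\tau_N}\ge 2\gamma\sum_{i=0}^{N-1}\tilde\tau_i^{-1}\ge 2\gamma\tilde\tau_0^{-1}\sum_{i=0}^{N-1}i^{q/2}$ and the integral comparison $\sum_{i=0}^{N-1}i^{q/2}\ge\int_0^{N-1}x^{q/2}\d x=(N-1)^{1+q/2}/(1+q/2)$ one obtains $\residual_0^N\le\tilde\tau_0 C_a N^{-(1+q/2)}$ with $C_a$ as in the text, while $\phi\equiv C_\phi$ gives $\residual_1^N=\residual_0^N C_\phi\sum_{i=0}^{N-1}a_i\le C_a C_\phi\tilde\tau_0^{-1}N^{-(1-q/2)}$. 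Substituting these two bounds into \eqref{eq:convergence-result-gap-scalar-projective-dual} (the restriction $N\ge 2$ entering only via the integral comparison) produces \eqref{eq:convergence-result-gap-scalar-projective-dual-final}, and the same computation with $\lambda=1$ gives the gap-free variant.
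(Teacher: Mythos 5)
Your proposal follows essentially the same route as the paper's own proof: it verifies the hypotheses of Proposition \ref{prop:projective} through the step-length constructions of the dual-penalty section (in particular $\tilde\omega_i \le 1$ giving $\tilde\tau_i \le \tilde\tau_0$, and \eqref{eq:dual-penalty-cond} covering $\rho_{i+1}\le\lambda\bar\rho$), and then inserts the bounds \eqref{eq:dual-only-rates} for $\residual_0^N$ and $\residual_1^N$ into \eqref{eq:convergence-result-gap-scalar-projective-dual}, with the restriction $N \ge 2$ entering exactly as in the paper via the integral comparison. One terminological quibble: \eqref{eq:simigamma-simiplus-2-special-simple-projection-4} is the \emph{stronger} (sufficient) condition obtained by discarding the non-negative $\eta_i$ term from \eqref{eq:simigamma-simiplus-2-special-simple-projection-3}, not a weaker one, but your use of it is correct and matches the paper.
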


\begin{proof}
    We apply Proposition \ref{prop:projective} whose assumptions we have verified during the course of the present section.
    In particular, $\tilde\tau_i \le \tilde\tau_0$ through the choice  \eqref{eq:gamma-projection-omega-choice} that forces $\tilde\omega_i \le 1$.
    Also, have already derived the rate  \eqref{eq:convergence-result-gap-scalar-projective-dual} from \eqref{eq:convergence-result-gap-scalar-projective}. Inserting \eqref{eq:dual-only-rates} into \eqref{eq:convergence-result-gap-scalar-projective-dual}, noting that the former is only valid for $N \ge 2$, immediately gives  \eqref{eq:convergence-result-gap-scalar-projective-dual-final}
\end{proof}

%%%
\section{Examples from image processing and the data sciences}
\label{sec:example}
%%%

We now consider several applications of our algorithms.
We generally have to consider discretisations, since many interesting infinite-dimensional problems necessitate Banach spaces. Using Bregman distances, it would be possible to generalise our work form Hilbert spaces to Banach spaces, as was done in \cite{hohage2014generalization} for the original method of \cite{chambolle2010first}. This is however outside the scope of the present work.

\begin{figure}
    \centering
    \begin{subfigure}{0.31\textwidth}
        \includegraphics[width=\textwidth]{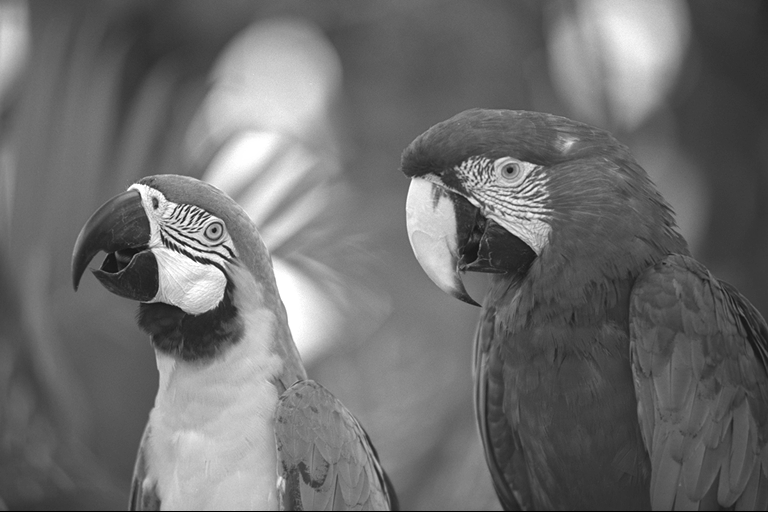}
        \caption{True image}
    \end{subfigure}
    \begin{subfigure}{0.31\textwidth}
        \includegraphics[width=\textwidth]{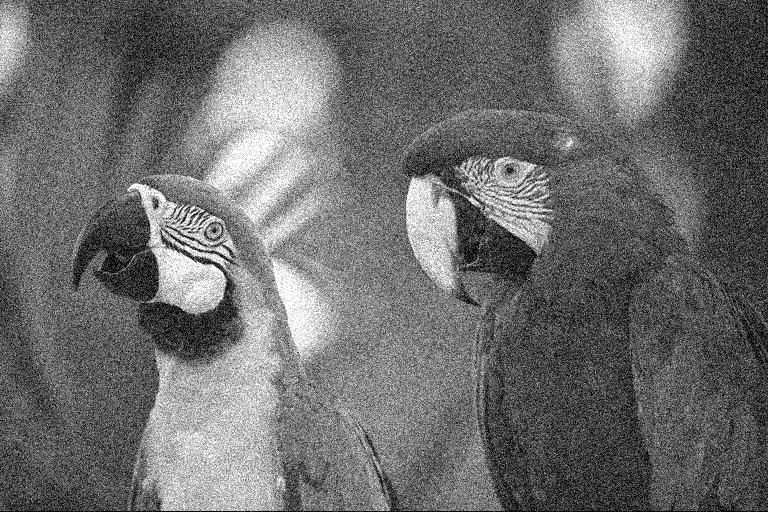}
        \caption{Noisy image}
        \label{fig:noisy}
    \end{subfigure}
    \begin{subfigure}{0.31\textwidth}
        \includegraphics[width=\textwidth]{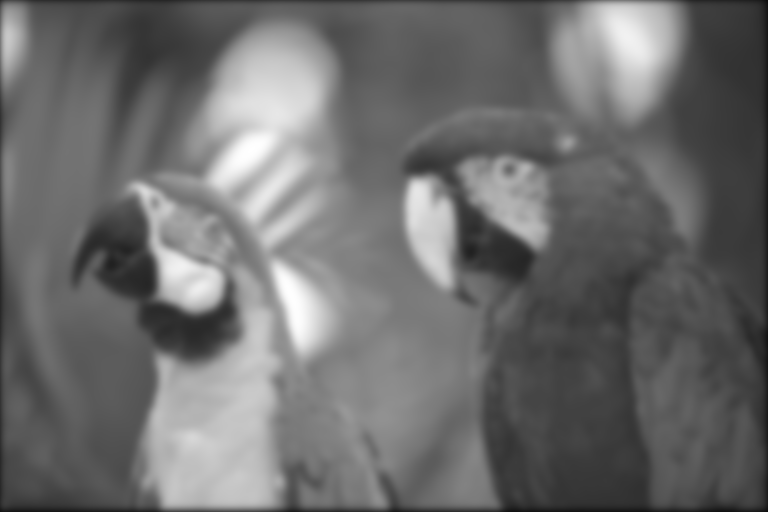}
        \caption{Blurry image}
        \label{fig:blurry}
    \end{subfigure}
    \caption{We use sample image (\subref{fig:noisy}) for denoising, and (\subref{fig:blurry}) for deblurring experiments.
    \emph{\footnotesize
    Free Kodak image suite photo, at the time of writing online at \url{http://r0k.us/graphics/kodak/}.}
    } 
    \label{fig:image}
\end{figure}

\subsection{Regularised least squares}
\label{sec:rlsq}

A large range of interesting application problems can be written in the \emph{Tikhonov regularisation} or \emph{empirical loss minimisation} form
\begin{equation}
    \label{eq:problem-rlsq}
    \min_{x \in X}~ G_0(f-Ax) + \alpha F(Kx).
\end{equation}
Here $\alpha>0$ is a regularisation parameter, $G_0: Z \to \R$ typically convex and smooth fidelity term with data $f \in Z$. The forward operator $A \in \linear(X; Z)$---which can often also be data---maps our unknown to the space of data. The operator $K \in \linear(X; Y)$ and the typically non-smooth and convex $F:  Y \to \extR$ act as a regulariser, although in case of support vector machines, for example, the smooth function is the regulariser.

We are particularly interested in strongly convex $G_0$ and $A$ with a non-trivial null-space. Examples include, for example Lasso---a type of regularised regression---with $G_0=\norm{x}_2^2/2$, $K=I$, and $F(x)=\norm{x}_1$, on finite-dimensional spaces. 
If the data of the Lasso is ``sparse'', in the sense that $A$ has a non-trivial null-space, then our algorithm can provide improved convergence rates.

In image processing examples abound; we refer to \cite{chan2005image} for an overview. In total variation ($\TV$) regularisation we still take $F(x)=\norm{x}_1$, but $K=\grad$.
Strictly speaking, this has to be formulated in the Banach space $\BVspace(\Omega)$, but we will consider the discretised setting to avoid this problem. 
For denoising of Gaussian noise with $\TV$ regularisation, we take $A=I$, and again $G_0=\norm{x}_2^2/2$. This problem is not so interesting to us, as it is fully strongly convex. In a simple form of $\TV$ inpainting---filling in missing regions of an image---we take $A$ as a sub-sampling operator $S$ mapping an image $x \in L^2(\Omega)$ to one in $L^2(\Omega \setminus \Omega_d)$, for $\Omega_d \subset \Omega$ the defect region that we want to recreate. Observe that in this case, $\Gamma=S^*S$ is directly a projection operator. This is therefore a problem for our algorithms! Related problems include reconstruction from subsampled magnetic resonance imaging (MRI) data (see, e.g., \cite{tuomov-phaserec,benning2015preconditioned}), where we take $A=S\mathcal{F}$ for $\mathcal{F}$ the Fourier transform. Still, $A^*A$ is a projection operator, so the problem perfectly suits our algorithms.

Another related problem is total variation deblurring, where $A$ is a convolution kernel. This problem is slightly more complicated to handle, as $A^*A$ is not a projection operator.Assuming periodic boundary conditions on a box $\Omega=\prod_{i=1}^m [c_i, d_i]$, we can write $A=\mathcal{F}^* \hat a \mathcal{F}$, multiplying the Fourier transform by some $\hat a \in L^2(\Omega)$. If $\abs{\hat a} \ge \gamma$ on a sub-domain, we obtain a projection-form $\Gamma$. (It would also be possible to extend our theory to non-constant $\gamma$, but we have decided not to extend the length of the paper by doing so. Dualisation likewise provides a further alternative.)

\paragraph{Satisfaction of convexity conditions}

In all of the above examples, when written in the saddle point form \eqref{eq:problem}, $F^*$ is a simple pointwise ball constraint. Lemma \ref{lemma:bounded-strong-convexity-constant} thus guarantees \eqref{eq:f-convexity-scalar-restrict}.
If $F(x)=\norm{x}_1$ and $K=I$, then clearly $\norm{P^\perp \hat x}$ can be bounded in $Z = L^1$ for $\hat x$ the optimal solution to \eqref{eq:problem-rlsq}.
% If we are in the finite-dimensional setting of, e.g., Lasso, then we also have boundedness in $L^2$.
Thus, for some $M>0$, we can add to \eqref{eq:problem-rlsq} the artificial constraint
\begin{equation}
    \label{eq:extraconstr}
    G'(x) \defeq \delta_{\norm{\freevar}_{Z} \le M}(P^\perp x).
\end{equation}
In finite dimensions, this gives a bound in $L^2$.
Lemma \ref{lemma:bounded-strong-convexity-constant} gives \eqref{eq:g-strong-convexity-scalar-restrict} with $\bar \gamma^\perp=\infty$. 

In case of our total variation examples, $F(x)=\norm{x}_1$ and $K=\grad$. Provided mean-zero functions are not in the kernel of $A$, one can through Poincaré's inequality \cite{ambrosio2000fbv} on $\BVspace(\Omega)$ and a two-dimensional connected domain $\Omega \subset \R^2$, show that even the original infinite-dimensional problems have bounded solutions in $L^2(\Omega)$. %Similar conditions naturally extend to finite-dimensional discretisations, so
We may therefore again add the artificial constraint \eqref{eq:extraconstr} with $Z=L^2$ to \eqref{eq:problem-rlsq}.

\paragraph{Dynamic bounds and pseudo duality gaps}

We seldom know the exact bound $M$, but can derive conservative estimates. Nevertheless adding such a bound to Algorithm \ref{alg:alg-proj} is a simple, easily-implemented projection of $P^\perp(x^i - \Tau_i K^* y^i)$ into the constraint set.
In practise, we do not use or need the projection, and update the bound $M$ dynamically so as to ensure that the constraint \eqref{eq:extraconstr} is never active.
Indeed, $A$ having a non-trivial nullspace also causes duality gaps for \eqref{eq:problem} to be numerically infinite. In \cite{tuomov-dtireg} a ``pseudo duality gap'' was therefore introduced, based on dynamically updating $M$. We will also use this type of dynamic duality gaps in our reporting.

\subsection{$\TGV^2$ denoising and related problem structure}

So far, we have considered very simple regularisation terms.
Total generalised variation, $\TGV$, was introduced in \cite{bredies2009tgv} as a higher-order generalisation of $\TV$. It avoids the unfortunate stair-casing effect of $\TV$---large flat areas with sharp transitions---while preserving the critical edge preservation property that smooth regularisers lack.
We concentrate on the second-order $\TGV^2$.
In all of our image processing examples, we can replace $\TV$ by $\TGV^2$.

As with total variation, we have to consider discretised models due the original problem being set in the Banach space $\BVspace(\Omega)$. For two parameters $\alpha,\beta>0$, the regularisation functional is written in the differentiation cascade form of \cite{sampta2011tgv} as
\[
    \TGV^2_{(\beta,\alpha)}(u)
    \defeq \min_w~ \alpha \norm{\grad u-w}_1 + \beta\norm{\Eabs{u}}_1.
\]
Here $\Eabs=(\grad^T+\grad)/2$ is the symmetrised gradient. With $x=(u, w)$ and $y=(y_1, y_2)$, we may write the problem
\begin{equation}
    \label{eq:tgv-reconstr}
    \min_u G_0(f-Au) + \TGV^2_{(\beta,\alpha)}(u),
\end{equation}
in the saddle-point form \eqref{eq:problem} with
\[
    G(x) \defeq G_0(f-Au), 
    \quad
    F^*(y)=\delta_{\norm{\freevar}_{L^\infty} \le \alpha}(y_1) + \delta_{\norm{\freevar}_{L^\infty} \le \beta}(y_2),
    \quad\text{and}\quad
    K \defeq
        \begin{pmatrix}
            \grad & -I \\
            0 & \Eabs
        \end{pmatrix}.
\]

% If $u=(v, w) \in \ker K$, then
% $w=\grad v$, and $\Eabs w=0$. The latter says that on a connected domain $\Omega$ that $w(\xi)=W\xi+c$ for skew-symmetric $W$ and a constant $c \in \R^2$ \cite{temam1985mpp}. That is, $W=-W^T$.
% With $\grad v(\xi)=W\xi +c$, we deduce $W=0$. Therefore $v(\xi)=\iprod{c}{\xi}+a$ for some $a \in \R$ is affine. That is
% \[
%     \ker K=\{(v, \grad v) \mid v \text{ affine}\}.
% \]
% Therefore, in the continous setting, for \eqref{eq:kk-aa-invertible} to be satisfied, it suffices that affine functions are not in the kernel of $A$. \TODO{Here already only using $u$, not $(u,w)$.} Roughly the same applies in the discrete setting, \TODO{but we have to be a little more careful.}

If $A=I$, as is the case for denoising,
%, or more generally if $A^*A \ge \lambda I > 0$,
 this is an instance of the general structure
\[
    G(x_1, x_2)=G_1(x_1)+G_2(x_2), \quad
    F^*(y_1, y_2)=F^*_1(y_1)+F^*_2(y_2),
    \quad\text{and}\quad
    K \defeq
        \begin{pmatrix}
            K_{1,1} & K_{1,2} \\
            0 & K_{2,2}
        \end{pmatrix},
\]
where $G_1$ is strongly convex with factor $\gamma$.
To apply Algorithm \ref{alg:alg-projective-both}, we therefore need to find $C_\psi$ and $\bar\gamma^\perp$ satisfying for all $0 \le \gamma^\perp \le \bar\gamma^\perp$ the condition
\begin{equation}
    \label{eq:gcond-tgv}
    G_2(x_2')-G_2(x_2) \ge \iprod{\subdiff G_2(x_2)}{x_2'-x_2} + \frac{\gamma^\perp}{2}(\norm{x_2'-x_2}^2 - C_\psi),
    \quad (x_2', x_2 \in X_2).
\end{equation}
For both Algorithm \ref{alg:alg-proj} and Algorithm \ref{alg:alg-projective-both}, we also need $F_j^*$, ($j=1,2$), to satisfy for all $0 \le \rho \le \bar\rho$ and some $C_\phi$ the condition
\begin{equation}
    \label{eq:fcond-tgv}
    F_j^*(y_j')-F_j^*(y_j) \ge \iprod{\subdiff F_j^*(y_j)}{y_j'-y_j} + \frac{\rho}{2}(\norm{y_j'-y_j}^2 - C_\phi),
    \quad (y_j', y_j \in Y_j).
\end{equation}
If these conditions hold, we have
\[
    \Gamma=\gamma P
    \quad\text{for}\quad
    P=\begin{pmatrix} I & 0 \\ 0 & 0 \end{pmatrix}.
    %\quad\text{and}\quad
    %\hat P=\begin{pmatrix} 0 & 0 \\ 0 & I\end{pmatrix}.
\]
As this is compatible with the splitting of $G$ into $G_1$ and $G_2$, the prox-update  \eqref{eq:alg-projective-both-nextx} splits into the uncoupled updates
\begin{align}
    \notag
    x_1^{i+1} & = \inv{(I+\tau_i G_1^*)}(x_1^i - \tau_i K_{1,1}^* y_1^i), 
    \\
    \notag
    x_2^{i+1} & = \inv{(I+\tau_i^\perp G_2^*)}(x_2^i - \tau_i^\perp K_{1,2}^* y_1^i - \tau_i^\perp K_{2,2}^* y_2^i ).
%     \\
% \intertext{Similarly}
%     \notag
%     y_1^{i+1} & = \inv{(I+\sigma_{i+1}F_1^*)}(y_1^i + \sigma_{i+1}K_{1,1} \bar x_1^{i+1} + \sigma_{i+1} K_{1,2} \bar x_2^{i+1}), 
%     \\
%     \notag
%     y_2^{i+1} & = \inv{(I+\sigma_{i+1}^\perp F_2^*)}(y_2^i + \sigma_{i+1}^\perp K_{2,2} \bar x_2^{i+1}).
\end{align}

For the general class of problems, \eqref{eq:f-convexity-scalar-restrict} with $\bar\rho=\infty$ is immediate from Lemma \ref{lemma:bounded-strong-convexity-constant}.
For $\TGV^2$ denoising in particular, the Sobolev--Korn inequality \cite{temam1985mpp} allows us to bound on a connected domain $\Omega \subset \R^2$ an optimal $\hat w$ to \eqref{eq:tgv-reconstr} as
\[
    \inf_{\bar w \text{ affine}} \norm{\hat w-\bar w}_{L^2} \le C_\Omega \norm{\Eabs\hat w}_1 \le C_\Omega G_0(f)
\]
for some constant $C_\Omega > 0$. We may assume that $\bar w=0$, as the affine part of $w$ is not used in \eqref{eq:tgv-reconstr}.
% Regarding \eqref{eq:gcond-tgv}, we know from \cite[Proposition A.1]{tuomov-dtireg} that any solution $\hat x=(\hat u, \hat w)$ to the $\TGV^2$ denoising problem satisfies for some data and parameter dependent constant $C>0$ the bound
%\begin{equation}
%    \label{eq:tgv-opt-bound}
%    \norm{\hat u}_1 + \norm{\hat w}_1 \le C(\norm{f}_2+\norm{f}_2^2).
%\end{equation}
Therefore we may again add the artificial constraint $G_2(w)=\delta_{\norm{\freevar}_{L^2} \le M}(w)$ to the $\TGV^2$ denoising problem. By Lemma \ref{lemma:bounded-strong-convexity-constant}, $G$ will then satisfy \eqref{eq:g-strong-convexity-scalar-restrict} with $\bar\gamma^\perp=\infty$.

%Note that \eqref{eq:tgv-opt-bound} also guarantees the conditions for the existence of solutions to \eqref{eq:oc}, discussed after the latter.

%%%
\subsection{Numerical results}
\label{sec:numerical}
%%%%

We demonstrate our algorithms on $\TGV^2$ denoising and $\TV$ deblurring. Our tests are done on the photographs in Figure \ref{fig:image}, both at the original resolution of $768 \times 512$, and scaled down by a factor of $0.25$ to $192 \times 128$ pixels. 
For both of our example problems, we calculate a target solution by taking one million iterations of the basic PDHGM \eqref{eq:cp}. We also tried interior point methods for this, but they are only practical for the smaller denoising problem.
%For the scaled-down image, we compute for the denoising problem a pseudo ground-truth solution with CVX \cite{gb08,cvx} + SeDuMi \cite{sturm1999sedumi}, on CVX precision setting `high'. This process takes several hours, but allows us to study the convergence of the iterates of our algorithms.
%Unfortunately, this process is not feasible for the original image, or for the deblurring problem at any reasonable image resolution

\begin{figure}[t!]
    \centering
    \begin{subfigure}{0.45\textwidth}
        \includegraphics[width=\textwidth]{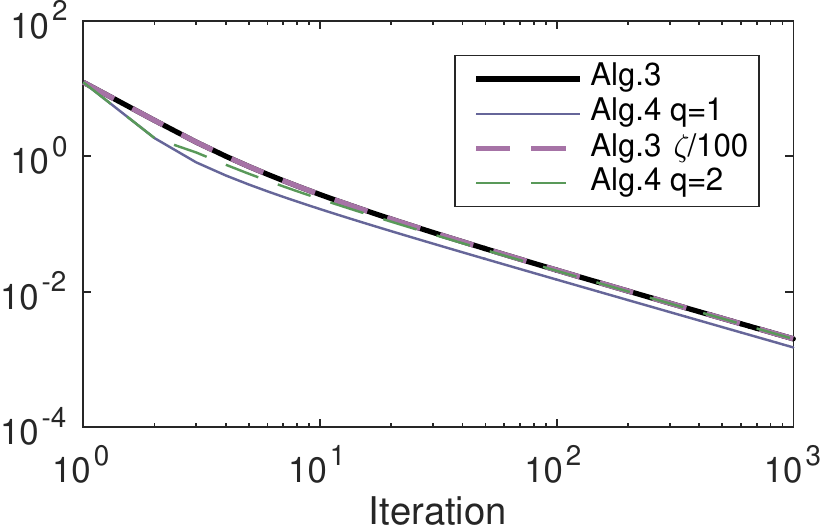}
        \caption{$\tau$}
        \label{fig:param-tau}
    \end{subfigure}
    \begin{subfigure}{0.45\textwidth}
        \includegraphics[width=\textwidth]{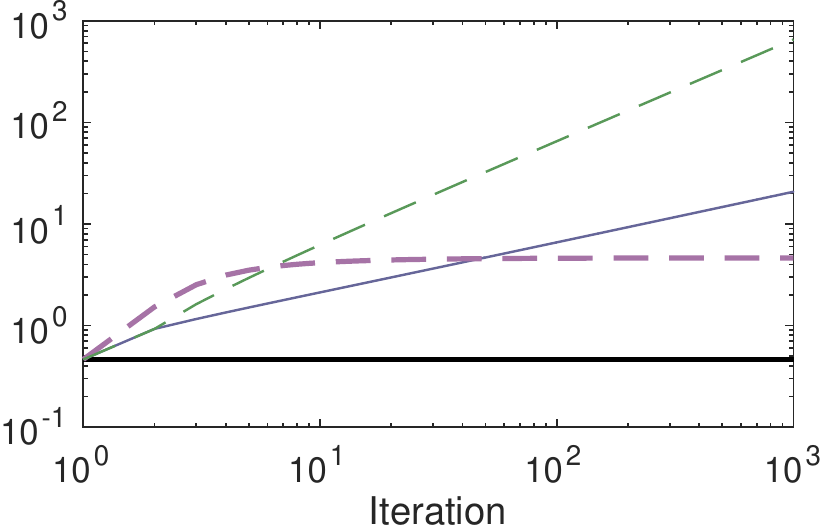}
        \caption{$\tau^\perp$}
        \label{fig:param-tau2}
    \end{subfigure}
    \begin{subfigure}{0.45\textwidth}
        \includegraphics[width=\textwidth]{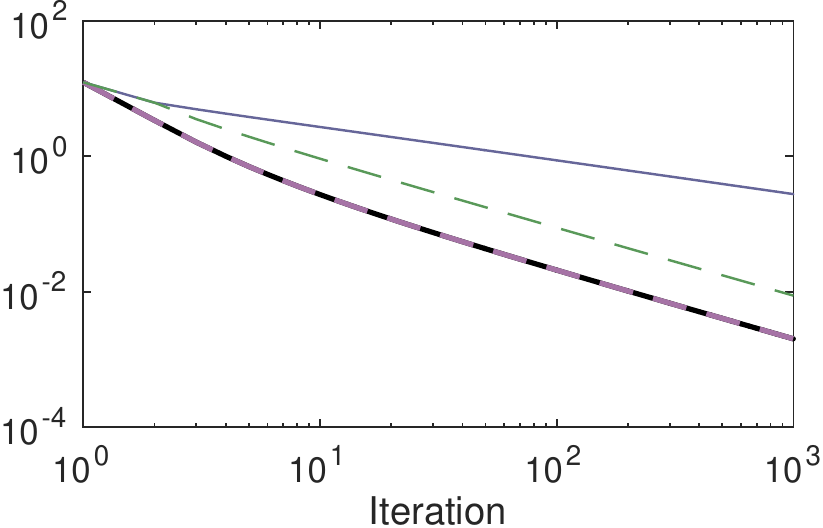}
        \caption{$\tilde\tau$}
        \label{fig:param-tautilde}
    \end{subfigure}
    \begin{subfigure}{0.45\textwidth}
        \includegraphics[width=\textwidth]{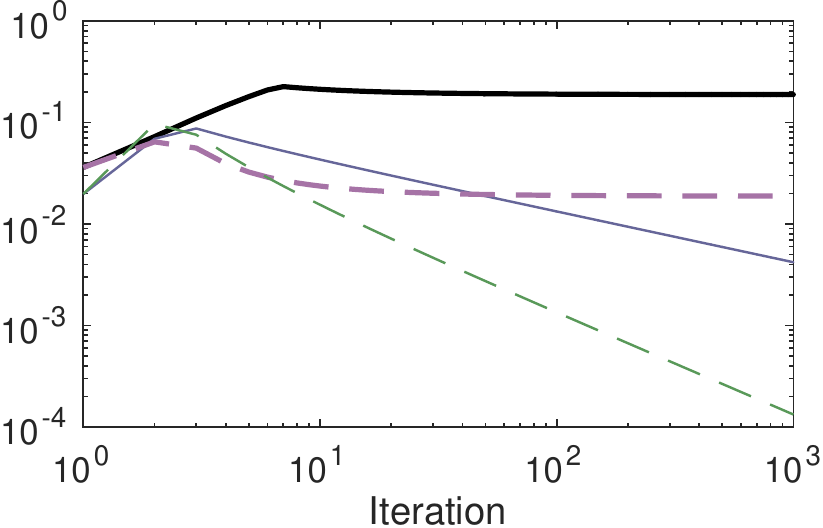}
        \caption{$\sigma$}
        \label{fig:param-sigma}
    \end{subfigure}
    \caption{Step length parameter evolution, both axes logarithmic.
        `Alg.3' and `Alg.4 q=1' have the same parameters as our numerical experiments for the respective algorithms, in particular  $\zeta=\tau_0^{\perp,-2}$ for Algorithm \ref{alg:alg-projective-both}, which yields constant $\tau^\perp$.  `Alg.3 $\zeta/100$' uses the value $\zeta=\tau_0^{\perp,-2}/100$, which causes $\tau^\perp$ to increase for some iterations. `Alg.4 q=2' uses the value $q=2$ for Algorithm \ref{alg:alg-proj}, everything else being kept equal.
        %`Alg 3' is not visible in the (\subref{fig:param-tau}) and (\subref{fig:param-tautilde}) as the values equal `Alg.3 $\zeta/10$'.
        }
    \label{fig:param}
\end{figure}

We evaluate Algorithm \ref{alg:alg-projective-both} and \ref{alg:alg-proj} against the standard unaccelerated PDHGM of \cite{chambolle2010first}, as well as (a) the mixed-rate method of \cite{chen2015optimal}, denoted here C-L-O, (b) the relaxed PDHGM of \cite{chambolle2014ergodic,he2012convergence}, denoted here `Relax', and (c) the adaptive PDHGM of \cite{goldstein2015adaptive}, denoted here `Adapt'.
All of these methods are very closely linked, and have comparable low costs for each step. This makes them straightforward to compare.
%We exclude primal-only and dual-only algorithms, as they generally have expensive steps for our example problems.

% For this very reason, we exclude, for example, FISTA \cite{beck2009fista} from our considerations. Although excellent for malleable problems, each of its iterations for both $\TGV^2$ denoising and $\TV$ deblurring would involve a relatively expensive inner problem. Indeed, when FISTA is applied to $\TV$ denoising, a predual formulation is used \cite{beck2009fast}. Such a formulation is not easily available for more complex problems, which motivates the development of easily applicable primal-dual techniques, like those we consider.

As we have discussed, for comparison and stopping purposes, we need to calculate a pseudo duality gap as in \cite{tuomov-dtireg}, because the real duality gap is in practise infinite when $A$ has a non-trivial nullspace. We do this dynamically, upgrading the $M$ in \eqref{eq:extraconstr} every time we compute the duality gap.
For both of our example problems, we use for simplicity $Z=L^2$ in \eqref{eq:extraconstr}. In the calculation of the final duality gaps comparing each algorithm, we then take as $M$ the maximum over all evaluations of all the algorithms. This makes the results fully comparable. 
We always report the duality gap in decibels $10\log_{10}(\text{gap}^2/\text{gap}_0^2)$ relative to the initial iterate. Similarly, we report the distance to the target solution $\hat u$ in decibels $10\log_{10}(\norm{u^i-\hat u}^2/\norm{\hat u}^2)$,
and the primal objective value $\text{val}(x) \defeq G(x)+F(Kx)$ relative to the target as
$10\log_{10}(\text{val}(x)^2/\text{val}(\hat x)^2)$.
Our computations were performed in Matlab+C-MEX on a MacBook Pro with 16GB RAM and a 2.8 GHz Intel Core i5 CPU.

\newlength{\plotw}
\setlength{\plotw}{0.23\textwidth}

\begin{figure}[t!]
    \centering
    \begin{subfigure}{0.31\textwidth}\flushright
        \includegraphics[height=\plotw]{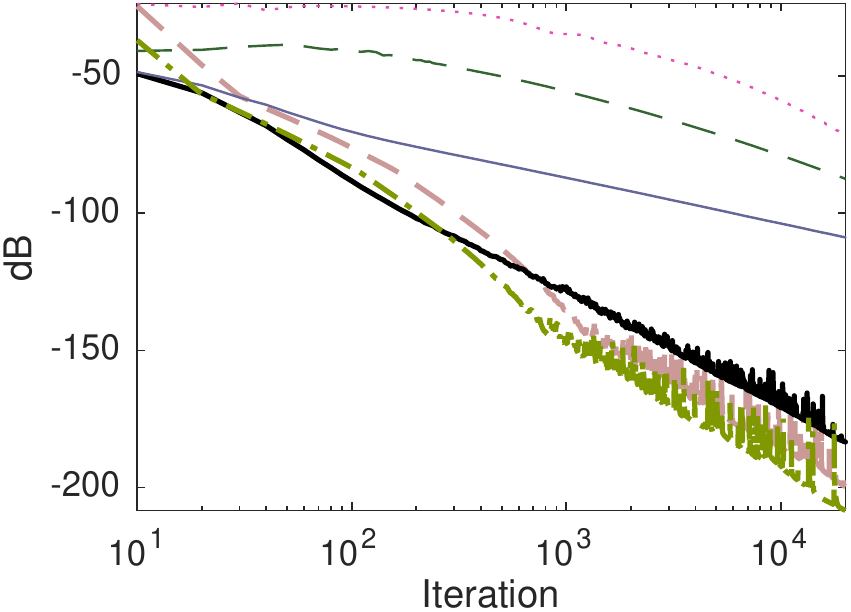}
        \caption{Gap, low resolution}
        \label{fig:tgv-denoising-gamma05-gap-low}
    \end{subfigure}
    \begin{subfigure}{0.31\textwidth}\flushright
        \includegraphics[height=\plotw]{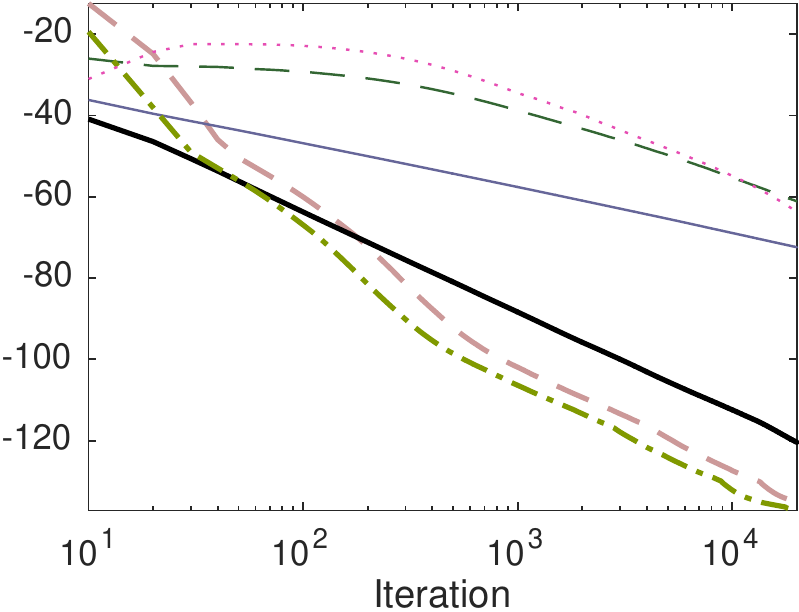}
        \caption{Target, low resolution}
        \label{fig:tgv-denoising-gamma05-target-low}
    \end{subfigure}
    \begin{subfigure}{0.31\textwidth}\flushright
        \includegraphics[height=\plotw]{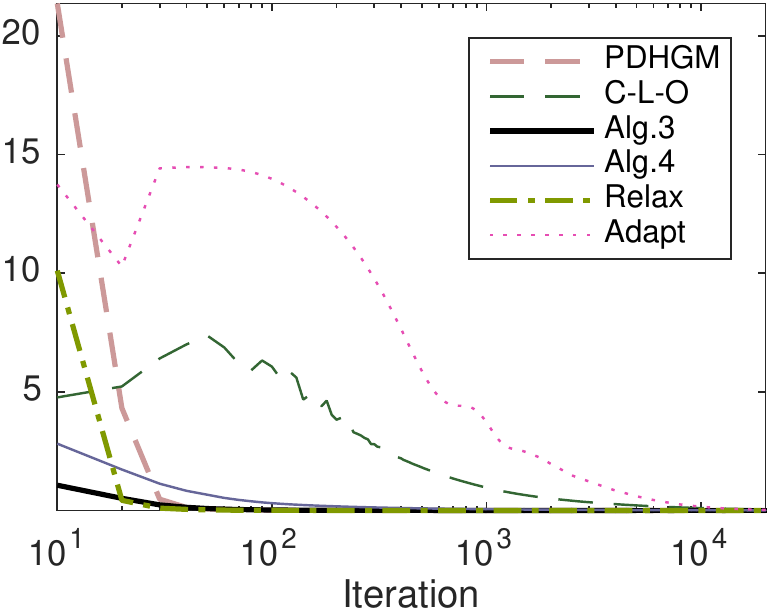}
        \caption{Value, low resolution}
        \label{fig:tgv-denoising-gamma05-val-low}
    \end{subfigure}
    \begin{subfigure}{0.31\textwidth}\flushright
        \includegraphics[height=\plotw]{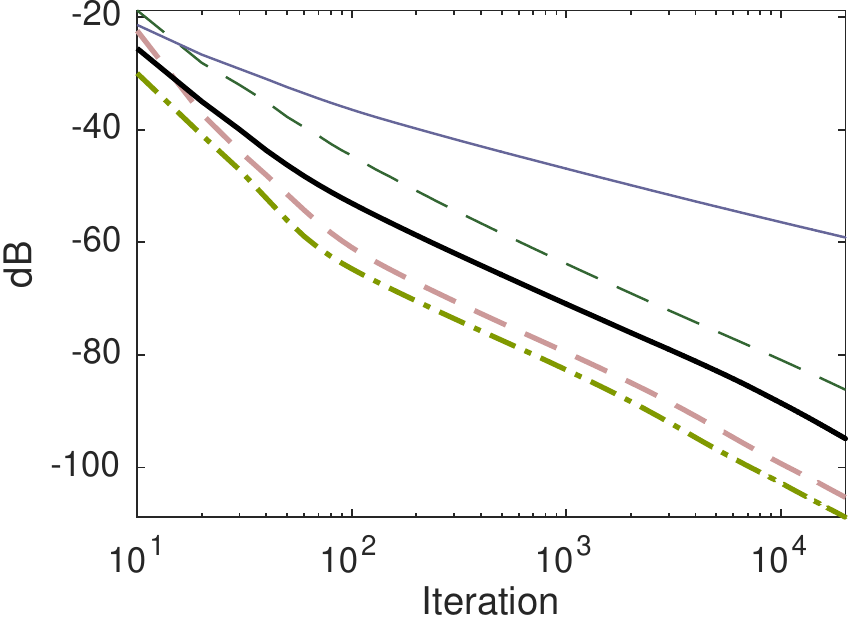}
        \caption{Gap, high resolution}
        \label{fig:tgv-denoising-gamma05-gap-high}
    \end{subfigure}
    \begin{subfigure}{0.31\textwidth}\flushright
        \includegraphics[height=\plotw]{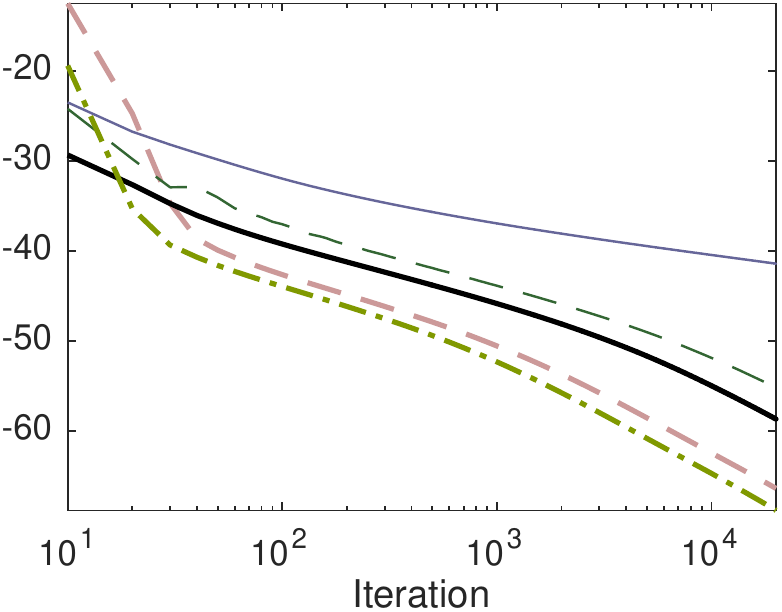}
        \caption{Target, high resolution}
        \label{fig:tgv-denoising-gamma05-target-high}
    \end{subfigure}
    \begin{subfigure}{0.31\textwidth}\flushright
        \includegraphics[height=\plotw]{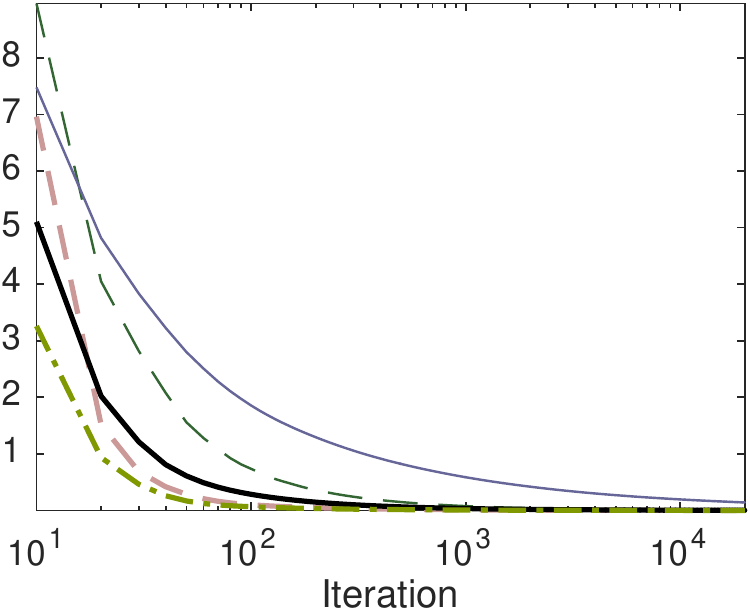}
        \caption{Value, high resolution}
        \label{fig:tgv-denoising-gamma05-val-high}
    \end{subfigure}
    \caption{$\TGV^2$ denoising performance, 20000 iterations, high and low resolution images. The plot is logarithmic, with the decibels calculated as in Section \ref{sec:numerical}. The poor high-resolution results for `Adapt' \cite{goldstein2015adaptive} have been omitted to avoid poor scaling of the plots.}
    \label{fig:tgv-denoising-gamma05}
\end{figure}

\begin{table}[t!]
    \centering
    \footnotesize
    \setlength{\tabcolsep}{3pt}
    \begin{tabular}{l|rr|rr|rr}
\multicolumn{7}{c}{high resolution}\\
\hline
 & \multicolumn{2}{c}{gap $\le -50$dB} & \multicolumn{2}{|c}{tgt $\le -50$dB} & \multicolumn{2}{|c}{val $\le 1$dB}\\
Method & iter & time & iter & time & iter & time\\
\hline
PDHGM & 30 & 0.40s & 50 & 0.53s & 30 & 0.40s\\
C-L-O & 500 & 4.67s & 5170 & 51.78s & 970 & 9.04s\\
Alg.3 & 20 & 0.29s & 30 & 0.36s & 20 & 0.29s\\
Alg.4 & 20 & 0.40s & 200 & 1.92s & 40 & 0.62s\\
Relax & 20 & 0.34s & 40 & 0.57s & 20 & 0.34s\\
Adapt & 5360 & 106.63s & 6130 & 121.98s & 3530 & 70.78s\\
\end{tabular}

    \ \ %
    \begin{tabular}{rr|rr|rr}
\multicolumn{6}{c}{high resolution}\\
\hline
 \multicolumn{2}{c}{gap $\le -50$dB} & \multicolumn{2}{|c}{tgt $\le -50$dB} & \multicolumn{2}{|c}{val $\le 1$dB}\\
iter & time & iter & time & iter & time\\
\hline
 50 & 8.85s & 870 & 128.08s & 30 & 5.13s\\
 190 & 37.47s & 6400 & 1261.36s & 80 & 15.76s\\
 80 & 12.30s & 3320 & 512.35s & 40 & 6.20s\\
 2080 & 317.93s & -- & -- & 340 & 52.06s\\
40 & 7.45s & 580 & 106.05s & 20 & 3.70s\\
\\
\end{tabular}

    \caption{$\TGV^2$ denoising performance, maximum 20000 iterations. The CPU time and number of iterations (at a resolution of 10) needed to reach given solution quality in terms of the duality gap, distance to target, or primal objective value.}
    \label{table:tgv-denoising-gamma05}
\end{table}

\paragraph{$\TGV^2$ denoising}

The noise in our high-resolution test image, with values in the range $[0, 255]$ has standard deviation $29.6$ or $12$dB. In the downscaled image, these become, respectively, $6.15$ or $25.7$dB. As parameters $(\beta, \alpha)$ of the $\TGV^2$ regularisation functional, we choose $(4.4, 4)$ for the downscale image, and translate this to the original image by multiplying by the scaling vector $(0.25^{-2}, 0.25^{-1})$ corresponding to the $0.25$ downscaling factor. See \cite{tuomov-tgvlearn} for a discussion about rescaling and regularisation factors, as well as for a justification of the $\beta/\alpha$ ratio.

For the PDHGM and our algorithms, we take $\gamma=0.5$, corresponding to the gap convergence results. 
We choose $\delta=0.01$, and parametrise the PDHGM with $\sigma_0=1.9/\norm{K}$ and $\tau_0^*=\tau_0 \approx 0.52/\norm{K}$ solved from $\tau_0\sigma_0=(1-\delta)\norm{K}^2$. These are values that typically work well. For forward-differences discretisation of $\TGV^2$ with cell width $h=1$, we have $\norm{K}^2 \le 11.4$ \cite{tuomov-dtireg}.
We use the same value of $\delta$ for Algorithm \ref{alg:alg-projective-both} and Algorithm \ref{alg:alg-proj}, but choose $\tau_0^\perp = 3\tau_0^*$, and $\tau_0=\tilde\tau_0=80\tau_0^*$. We also take $\zeta=\tau_0^{\perp,-2}$ for Algorithm \ref{alg:alg-projective-both}. These values have been found to work well by trial and error, while keeping $\delta$ comparable to the PDHGM. A similar choice of $\tau_0$ with a corresponding modification of $\sigma_0$ would significantly reduce the performance of the PDHGM. For Algorithm \ref{alg:alg-proj} we take exponent $q=1$ for the sequence $\{a_i\}$. This gives in principle a mixed $O(1/N^{1.5})+O(1/N^{0.5})$ rate, possibly improved by the convergence of the dual sequence. We plot the evolution of the step length for these and some other choices in Figure \ref{fig:param}.
For the C-L-O, we use the detailed parametrisation from \cite[Corollary 2.4]{chen2011image}, taking as $\Omega_Y$ the true $L^2$-norm Bregman divergence of
% the dual ball
$\B(0, \alpha) \times \B(0, \beta)$, and $\Omega_X=10 \cdot \norm{f}^2/2$ as a conservative estimate of a ball containing the true solution.
For `Adapt' we use the exact choices of $\alpha_0$, $\eta$, and $c$ from \cite{goldstein2015adaptive}. For `Relax' we use the value $1.5$ for the inertial $\rho$ parameter of \cite{chambolle2014ergodic}. For both of these algorithms, we use the same choices of $\sigma_0$ and $\tau_0$ as for the PDHGM.

\begin{figure}[t!]
    \centering
    \begin{subfigure}{0.31\textwidth}\flushright
        \includegraphics[height=\plotw]{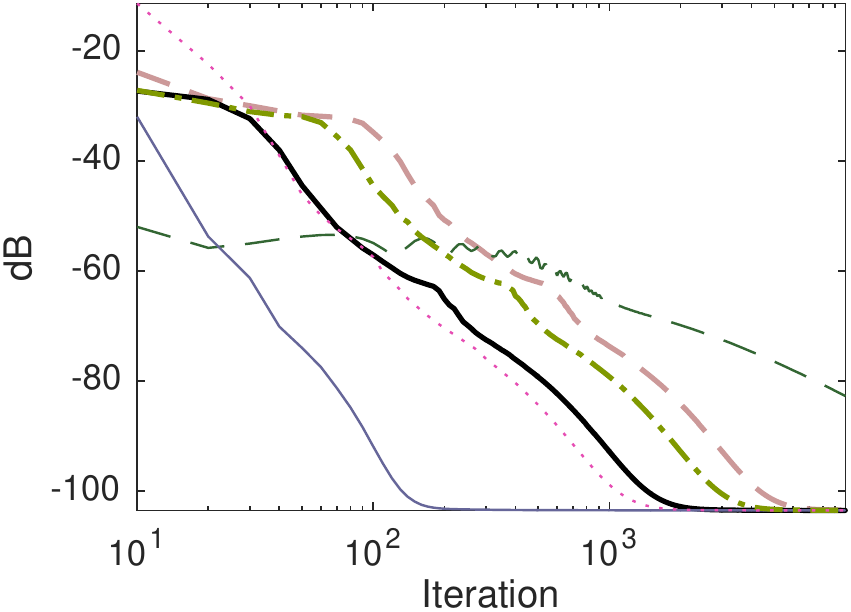}
        \caption{Gap, low resolution}\flushright
        \label{fig:tv-deblurring-gamma05-gap-low}
    \end{subfigure}
    \begin{subfigure}{0.31\textwidth}\flushright
        \includegraphics[height=\plotw]{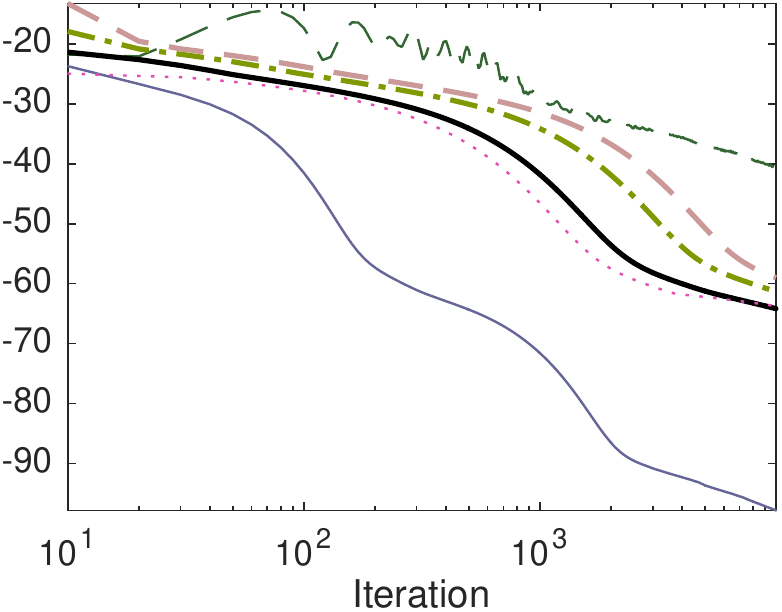}
        \caption{Target, low resolution}
        \label{fig:tv-deblurring-gamma05-target-low}
    \end{subfigure}
    \ \hfil%
    \begin{subfigure}{0.31\textwidth}\flushright
        \includegraphics[height=\plotw]{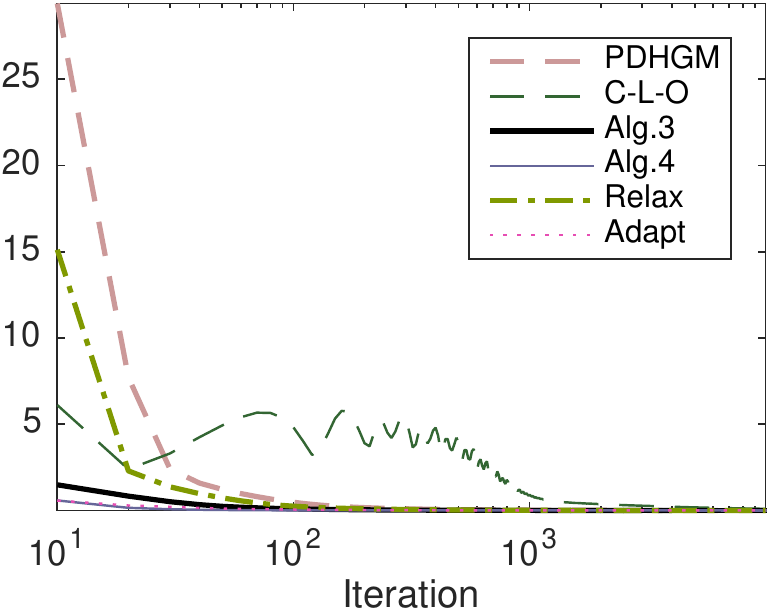}
        \caption{Value, low resolution}
        \label{fig:tv-deblurring-gamma05-value-low}
    \end{subfigure}
    \begin{subfigure}{0.31\textwidth}\flushright
        \includegraphics[height=\plotw]{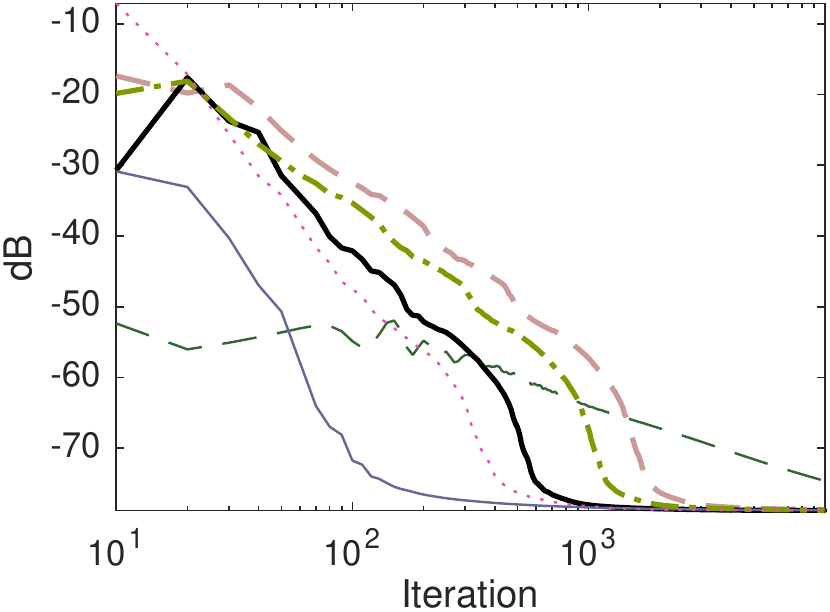}
        \caption{Gap, high resolution}
        \label{fig:tv-deblurring-gamma05-gap-high}
    \end{subfigure}
    \begin{subfigure}{0.31\textwidth}\flushright
        \includegraphics[height=\plotw]{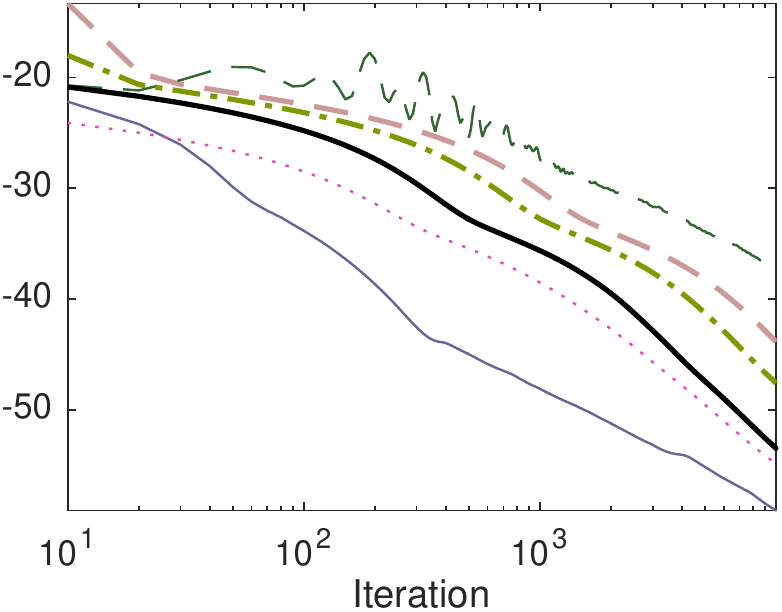}
        \caption{Target, high resolution}
        \label{fig:tv-deblurring-gamma05-target-high}
    \end{subfigure}
    \ \hfil%
    \begin{subfigure}{0.31\textwidth}\flushright
        \includegraphics[height=\plotw]{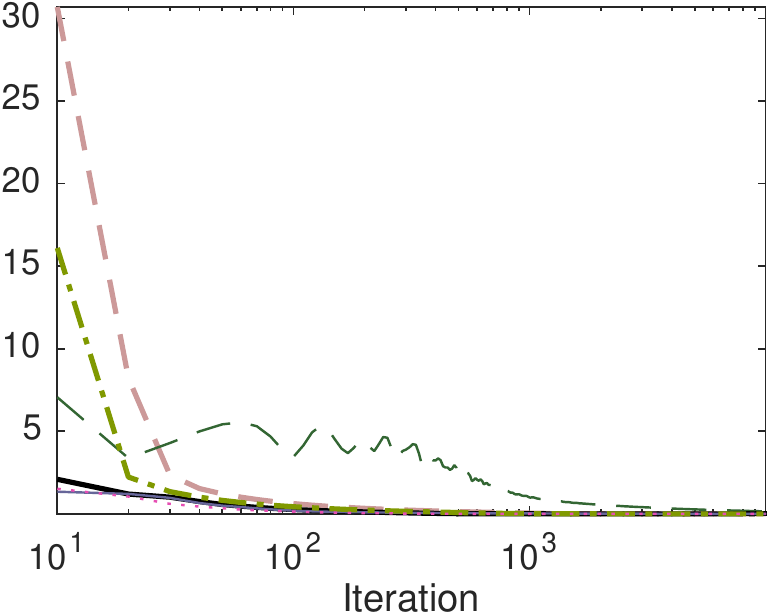}
        \caption{Value, high resolution}
        \label{fig:tv-deblurring-gamma05-value-high}
    \end{subfigure}
    % \begin{subfigure}{0.31\textwidth}
    %     \includegraphics[width=\textwidth]{img/kodim23gray-deblurred-both.png}
    %     \\
    %     \caption{Deblurred image}
    %     \label{fig:deblurred}
    % \end{subfigure}
    \caption{$\TV$ deblurring performance, 10000 iterations, high and low resolution images. The plot is logarithmic, with the decibels calculated as in Section \ref{sec:numerical}.}
    \label{fig:tv-deblurring-gamma05}
\end{figure}

\begin{table}[t!]
    \centering
    \footnotesize
    \setlength{\tabcolsep}{3pt}
    \begin{tabular}{l|rr|rr|rr}
\multicolumn{7}{c}{high resolution}\\
\hline
 & \multicolumn{2}{c}{gap $\le -50$dB} & \multicolumn{2}{|c}{tgt $\le -50$dB} & \multicolumn{2}{|c}{val $\le 1$dB}\\
Method & iter & time & iter & time & iter & time\\
\hline
PDHGM & 200 & 1.29s & 4800 & 32.72s & 60 & 0.47s\\
C-L-O & 10 & 0.14s & -- & -- & 950 & 5.95s\\
Alg.3 & 70 & 0.62s & 1630 & 13.40s & 20 & 0.25s\\
Alg.4 & 20 & 0.29s & 140 & 1.24s & 10 & 0.22s\\
Relax & 130 & 0.85s & 3200 & 20.06s & 40 & 0.29s\\
Adapt & 70 & 0.73s & 1210 & 11.30s & 10 & 0.16s\\
\end{tabular}

    \ \ %
    \begin{tabular}{rr|rr|rr}
\multicolumn{6}{c}{high resolution}\\
\hline
 \multicolumn{2}{c}{gap $\le -50$dB} & \multicolumn{2}{|c}{tgt $\le -50$dB} & \multicolumn{2}{|c}{val $\le 1$dB}\\
iter & time & iter & time & iter & time\\
\hline
 500 & 49.84s & -- & -- & 70 & 6.59s\\
 10 & 1.05s & -- & -- & 1000 & 96.60s\\
 170 & 24.03s & 6760 & 925.94s & 40 & 6.13s\\
 50 & 6.01s & 1550 & 215.95s & 30 & 3.66s\\
 340 & 33.57s & -- & -- & 50 & 5.29s\\
 120 & 18.76s & 5300 & 800.84s & 30 & 4.72s\\
\end{tabular}

    \caption{$\TV$ deblurring performance, maximum 10000 iterations. The CPU time and number of iterations (at a resolution of 10) needed to reach given solution quality in terms of the duality gap, distance to target, or primal objective value.}
    \label{table:tv-deblurring-gamma05}
\end{table}

We take fixed 20000 iterations, and initialise each algorithm with $y^0=0$ and $x^0=0$. To reduce computational overheads, we compute the duality gap and distance to target only every 10 iterations instead of at each iteration.
The results are in Figure \ref{fig:tgv-denoising-gamma05}, and Table \ref{table:tgv-denoising-gamma05}. 
As we can see, Algorithm \ref{alg:alg-projective-both} performs extremely well for the low resolution image, especially in its initial iterations. After about 700 or 200 iterations, depending on the criterion, the standard and relaxed PDHGM start to overtake. This is a general effect that we have seen in our tests: the standard PDHGM performs in practise very well asymptotically, although in principle all that exists is a $O(1/N)$ rate on the ergodic duality gap. 
Algorithm \ref{alg:alg-proj}, by contrast, does not perform asymptotically so well.
It can be extremely fast on its initial iterations, but then quickly flattens out. 
The C-L-O surprisingly performs better on the high resolution image than on the low resolution image, where it does somewhat poorly in comparison to the other algorithms.
The adaptive PDHGM performs very poorly for $\TGV^2$ denoising, and we have indeed excluded the high-resolution results from our reports to keep the scaling of the plots informative.
Overall, Algorithm \ref{alg:alg-projective-both} gives good results fast, although the basic and relaxed PDHGM seems to perform, in practise, better asymptotically.

\paragraph{$\TV$ deblurring}

Our test image has now been distorted by Gaussian blur of kernel width $4$, which we intent to remove. 
We denote by $\hat a$ the Fourier presentation of the blur operator as discussed in Section \ref{sec:rlsq}. For numerical stability of the pseudo duality gap, we zero out small entries, replacing this $\hat a$ by $\hat a \chi_{\abs{\hat a(\freevar)} \ge \norm{\hat a}_\infty/1000}(\xi)$. Note that this is only needed for the stable computation of $G^*$ for the pseudo duality gap, to compare the algorithms; the algorithms themselves are stable without this modification.
To construct the projection operator $P$, we then set $\hat p(\xi)=\chi_{\abs{\hat a(\freevar)} \ge 0.3 \norm{\hat a}_\infty}(\xi)$, and $P=\mathcal{F}^* \hat p \mathcal{F}$.

We use $\TV$ parameter $2.55$ for the high resolution image and the scaled parameter $2.55*0.15$ for the low resolution image.
We parametrise all the algorithms is exactly as $\TGV^2$ denoising above, of course with appropriate $\Omega_U$ and $\norm{K}^2 \le 8$ for $K=\grad$ \cite{chambolle2004meanalgorithm}.

The results are Figure \ref{fig:tv-deblurring-gamma05} and Table \ref{table:tv-deblurring-gamma05}. It does not appear numerically feasible to go significantly below $-100$dB or $-80$dB gap.
Our guess is that this is due to the numerical inaccuracies of the Fast Fourier Transform implementation in Matlab.
The C-L-O performs very well judged by the duality gap, although the images themselves and the primal objective value appear to take a little bit longer to converge. 
The relaxed PDHGM is again slightly improved from the standard PDHGM. The adaptive PDHGM performs very well, slightly outperforming Algorithm \ref{alg:alg-projective-both}, although not Algorithm \ref{alg:alg-proj}.
This time Algorithm \ref{alg:alg-proj} performs exceedingly well.

%To conclude, overall, our algorithms appear to decrease especially the primal objective function value extremely fast, and generally have excellent initial convergence.

%%%
\section*{Acknowledgements}
%%%

This research was started while T.~Valkonen was at the Center for Mathematical Modeling at Escuela Politécnica Nacional in Quito, supported by a Prometeo scholarship of the Senescyt (Ecuadorian Ministry of Science, Technology, Education, and Innovation). In Cambridge T. Valkonen has been supported by the EPSRC grant EP/M00483X/1 “Efficient computational tools for inverse imaging problems”.
Thomas Pock is supported by the European Research Council under the Horizon 2020 program, ERC starting grant agreement 640156.

%%%
\section*{A data statement for the EPSRC}
%%%

This is primarily a theory paper, with some demonstrations on a photograph freely available from the Internet.  As this article was written, the used photograph from the Kodak image suite was in particular available at \url{http://r0k.us/graphics/kodak/}.
It has also been archived with our implementations of the algorithms at \url{https://www.repository.cam.ac.uk/handle/1810/253697}.

%\bibliography{abbrevs,bib-own,bib}

\begin{thebibliography}{10}
\providecommand{\url}[1]{\texttt{#1}}
\providecommand{\urlprefix}{URL }
\expandafter\ifx\csname urlstyle\endcsname\relax
  \providecommand{\doi}[1]{doi:\discretionary{}{}{}#1}\else
  \providecommand{\doi}{doi:\discretionary{}{}{}\begingroup
  \urlstyle{rm}\Url}\fi
\providecommand{\eprint}[2][]{\url{#2}}

\bibitem{ambrosio2000fbv}
L.~Ambrosio, N.~Fusco and D.~Pallara, \emph{Functions of Bounded Variation and
  Free Discontinuity Problems}, Oxford University Press, 2000.

\bibitem{beck2009fast}
A.~Beck and M.~Teboulle, \emph{Fast gradient-based algorithms for constrained
  total variation image denoising and deblurring problems}, IEEE Transactions
  on Image Processing \textbf{18} (2009), 2419--2434,
  \doi{10.1109/TIP.2009.2028250}.

\bibitem{beck2009fista}
A.~Beck and M.~Teboulle, \emph{A fast iterative shrinkage-thresholding
  algorithm for linear inverse problems}, SIAM Journal on Imaging Sciences
  \textbf{2} (2009), 183--202, \doi{10.1137/080716542}.

\bibitem{beck2012smoothing}
A.~Beck and M.~Teboulle, \emph{Smoothing and first order methods: A unified
  framework}, SIAM Journal on Optimization \textbf{22} (2012), 557--580,
  \doi{10.1137/100818327}.

\bibitem{beck2014fdpg}
A.~Beck and M.~Teboulle, \emph{A fast dual proximal gradient algorithm for
  convex minimization and applications}, Operations Research Letters
  \textbf{42} (2014), 1 -- 6, \doi{10.1016/j.orl.2013.10.007}.

\bibitem{tuomov-phaserec}
M.~Benning, L.~Gladden, D.~Holland, C.-B. Sch{\"o}nlieb and T.~Valkonen,
  \emph{Phase reconstruction from velocity-encoded {MRI} measurements -- {A}
  survey of sparsity-promoting variational approaches}, Journal of Magnetic
  Resonance \textbf{238} (2014), 26--43, \doi{10.1016/j.jmr.2013.10.003}.
\newline\urlprefix\url{\homesiteprefix/phaserec.pdf}

\bibitem{benning2015preconditioned}
M.~Benning, F.~Knoll, C.-B. Sch{\"o}nlieb and T.~Valkonen, \emph{Preconditioned
  {ADMM} with nonlinear operator constraint} (2015), submitted,
  \eprint{1511.00425}.
\newline\urlprefix\url{\homesiteprefix/nonlinearADMM.pdf}

\bibitem{bredies2009tgv}
K.~Bredies, K.~Kunisch and T.~Pock, \emph{Total generalized variation}, SIAM
  Journal on Imaging Sciences \textbf{3} (2011), 492--526,
  \doi{10.1137/090769521}.

\bibitem{sampta2011tgv}
K.~Bredies and T.~Valkonen, \emph{Inverse problems with second-order total
  generalized variation constraints}, in: \emph{Proceedings of the 9th
  International Conference on Sampling Theory and Applications ({SampTA}) 2011,
  Singapore}, 2011.
\newline\urlprefix\url{\homesiteprefix/SampTA2011.pdf}

\bibitem{chambolle2004meanalgorithm}
A.~Chambolle, \emph{An algorithm for mean curvature motion}, Interfaces and
  Free Boundaries \textbf{6} (2004), 195.

\bibitem{chambolle2010first}
A.~Chambolle and T.~Pock, \emph{A first-order primal-dual algorithm for convex
  problems with applications to imaging}, Journal of Mathematical Imaging and
  Vision \textbf{40} (2011), 120--145, \doi{10.1007/s10851-010-0251-1}.

\bibitem{chambolle2014ergodic}
A.~Chambolle and T.~Pock, \emph{On the ergodic convergence rates of a
  first-order primal--dual algorithm}, Mathematical Programming  (2015), 1--35,
  \doi{10.1007/s10107-015-0957-3}.

\bibitem{chan2005image}
T.~Chan and J.~Shen, \emph{Image Processing and Analysis: Variational, PDE,
  Wavelet, and Stochastic Methods}, Society for Industrial and Applied
  Mathematics (SIAM), 2005.

\bibitem{chen2011image}
K.~Chen and D.~A. Lorenz, \emph{Image sequence interpolation using optimal
  control}, Journal of Mathematical Imaging and Vision \textbf{41} (2011),
  222--238, \doi{10.1007/s10851-011-0274-2}.

\bibitem{chen2015optimal}
Y.~Chen, G.~Lan and Y.~Ouyang, \emph{Optimal primal-dual methods for a class of
  saddle point problems}, SIAM Journal on Optimization \textbf{24} (2014),
  1779--1814, \doi{10.1137/130919362}.

\bibitem{tuomov-tgvlearn}
J.~C. de~Los~Reyes, C.-B. Sch{\"o}nlieb and T.~Valkonen, \emph{Bilevel
  parameter learning for higher-order total variation regularisation models}
  (2015), submitted, \eprint{1508.07243}.
\newline\urlprefix\url{\homesiteprefix/tgv_learn.pdf}

\bibitem{ekeland1999convex}
I.~Ekeland and R.~Temam, \emph{Convex analysis and variational problems}, SIAM,
  1999.

\bibitem{esser2010general}
E.~Esser, X.~Zhang and T.~F. Chan, \emph{A general framework for a class of
  first order primal-dual algorithms for convex optimization in imaging
  science}, SIAM Journal on Imaging Sciences \textbf{3} (2010), 1015--1046,
  \doi{10.1137/09076934X}.

\bibitem{fercoq2015coordinate}
O.~Fercoq and P.~Bianchi, \emph{A coordinate descent primal-dual algorithm with
  large step size and possibly non separable functions}  (2015),
  \eprint{1508.04625}.

\bibitem{goldstein2015adaptive}
T.~Goldstein, M.~Li and X.~Yuan, \emph{Adaptive primal-dual splitting methods
  for statistical learning and image processing}, in: \emph{Advances in Neural
  Information Processing Systems 28}, Edited by C.~Cortes, N.~Lawrence, D.~Lee,
  M.~Sugiyama and R.~Garnett, Curran Associates, Inc., 2015, 2080--2088.

\bibitem{he2012convergence}
B.~He and X.~Yuan, \emph{Convergence analysis of primal-dual algorithms for a
  saddle-point problem: From contraction perspective}, SIAM Journal on Imaging
  Sciences \textbf{5} (2012), 119--149, \doi{10.1137/100814494}.

\bibitem{hohage2014generalization}
T.~Hohage and C.~Homann, \emph{A generalization of the {C}hambolle-{P}ock
  algorithm to {B}anach spaces with applications to inverse problems} (2014),
  preprint, \eprint{1412.0126}.

\bibitem{lorenz2014accelerated}
D.~Lorenz and T.~Pock, \emph{An inertial forward-backward algorithm for
  monotone inclusions}, Journal of Mathematical Imaging and Vision \textbf{51}
  (2015), 311--325, \doi{10.1007/s10851-014-0523-2}.

\bibitem{mollenhoff2014primal}
T.~Möllenhoff, E.~Strekalovskiy, M.~Moeller and D.~Cremers, \emph{The
  primal-dual hybrid gradient method for semiconvex splittings}, SIAM Journal
  on Imaging Sciences \textbf{8} (2015), 827--857, \doi{10.1137/140976601}.

\bibitem{nesterov1983method}
Y.~Nesterov, \emph{A method of solving a convex programming problem with
  convergence rate {$O(1/k^2)$}}, Soviet Mathematics Doklady \textbf{27}
  (1983), 372--376.

\bibitem{nesterov2005smooth}
Y.~Nesterov, \emph{Smooth minimization of non-smooth functions}, Mathematical
  Programming \textbf{103} (2005), 127--152, \doi{10.1007/s10107-004-0552-5}.

\bibitem{pock2011iccv}
T.~Pock and A.~Chambolle, \emph{Diagonal preconditioning for first order
  primal-dual algorithms in convex optimization}, in: \emph{Computer Vision
  (ICCV), 2011 IEEE International Conference on}, 2011, 1762 --1769,
  \doi{10.1109/ICCV.2011.6126441}.

\bibitem{rockafellar-wets-va}
R.~T. Rockafellar and R.~J.-B. Wets, \emph{Variational Analysis}, Springer,
  1998.

\bibitem{rudin2006functional}
W.~Rudin, \emph{Functional Analysis}, International series in pure and applied
  mathematics, McGraw--Hill, 2006.

\bibitem{setzer2011operator}
S.~Setzer, \emph{Operator splittings, {B}regman methods and frame shrinkage in
  image processing}, International Journal of Computer Vision \textbf{92}
  (2011), 265--280, \doi{10.1007/s11263-010-0357-3}.

\bibitem{temam1985mpp}
R.~Temam, \emph{Mathematical problems in plasticity}, Gauthier-Villars, 1985.

\bibitem{tuomov-nlpdhgm}
T.~Valkonen, \emph{A primal-dual hybrid gradient method for non-linear
  operators with applications to {MRI}}, Inverse Problems \textbf{30} (2014),
  055012, \doi{10.1088/0266-5611/30/5/055012}, \eprint{1309.5032}.
\newline\urlprefix\url{\homesiteprefix/nl-pdhgm.pdf}

\bibitem{tuomov-big-images}
T.~Valkonen, \emph{Optimising big images}, in: \emph{Big Data Optimization:
  Recent Developments and Challenges}, Edited by A.~Emrouznejad, Studies in Big
  Data, Springer-Verlag, 2015, accepted.

\bibitem{tuomov-dtireg}
T.~Valkonen, K.~Bredies and F.~Knoll, \emph{Total generalised variation in
  diffusion tensor imaging}, SIAM Journal on Imaging Sciences \textbf{6}
  (2013), 487--525, \doi{10.1137/120867172}.
\newline\urlprefix\url{\homesiteprefix/dtireg.pdf}

\end{thebibliography}
 \providecommand{\homesiteprefix}{http://iki.fi/tuomov/mathematics}
  \providecommand{\eprint}[1]{\href{http://arxiv.org/abs/#1}{arXiv:#1}}
  \providecommand{\eprint}[1]{\href{http://arxiv.org/abs/#1}{arXiv:#1}}

%\newpage
%\listoftodos

\end{document}